\definecolor{darkblue}{rgb}{0,0,0.7}
\newcommand{\darkblue}{\color{darkblue}}
\newtheorem{theorem}{Theorem}[section]
\newtheorem{proposition}[theorem]{Proposition}
\newtheorem{lemma}[theorem]{Lemma}
\newtheorem{corollary}[theorem]{Corollary}
\newtheorem{definition}[theorem]{Definition}
\theoremstyle{definition}
\newtheorem{example}[theorem]{Example}
\newtheorem{remark}[theorem]{Remark}
\newcommand{\dotprod}[2]{\ensuremath{#1\cdot #2}}
\newcommand{\sub}[1]{\ensuremath{\underline{#1}}}
\newcommand{\conv}{\ensuremath{\mathrm{conv}}}
\newcommand{\KG}{\ensuremath{\mathrm{KG}}}
\newcommand{\simplex}{\ensuremath{\triangle}}
\newcommand{\R}{\ensuremath{\mathbb{R}}}
\newcommand{\N}{\ensuremath{\mathbb{N}}}
\newcommand{\Lfloor}{\left\lfloor}
\newcommand{\Rfloor}{\right\rfloor}
\newcommand{\fracfloor}[2]{\Lfloor\frac{#1}{#2}\Rfloor}
\newcommand{\bn}{{\overline{n}}}
\newcommand{\bm}{{\overline{m}}}
\newcommand{\bchi}{{\overline{\chi}}}
\newcommand{\polar}{\triangle}
\newcommand{\cK}{\mathcal{K}}
\newcommand{\cF}{\mathcal{F}}
\newcommand{\cZ}{\mathcal{Z}}
\newcommand{\defA}{A^{\sim}}
\newcommand{\defb}{b^{\sim}}
\newcommand{\defP}{P^{\sim}}
\newcommand{\ie}{{\it i.e.},~}
\newcommand{\eqdef}{\mbox{~\raisebox{0.2ex}{\scriptsize\ensuremath{\mathrm:}}\ensuremath{=} }}
\newcommand{\eqfed}{\mbox{~\ensuremath{=}\raisebox{0.15ex}{\scriptsize\ensuremath{\mathrm:}} }}
\newcommand{\defn}[1]{\emph{\darkblue #1}}
\title{Prodsimplicial-Neighborly Polytopes}
\author[B. Matschke]{Benjamin Matschke}
\address{Technische Universit\"at Berlin, Germany}
\email{benjaminmatschke@googlemail.com}
\author[J. Pfeifle]{Julian Pfeifle}
\address{Departament de Matem\`atica Aplicada II, Universitat Polit\`ecnica de Catalunya, Barcelona, Spain}
\email{julian.pfeifle@upc.edu}
\author[V. Pilaud]{Vincent Pilaud}
\address{\'Equipe Combinatoire et Optimisation, Universit\'e Pierre et Marie Curie, Paris, France}
\email{vpilaud@math.jussieu.fr}
\thanks{Benjamin Matschke was supported by DFG research group
Polyhedral Surfaces and by Deutsche Telekom Stiftung. Julian Pfeifle was supported by grants MTM2006-01267 and MTM2008-03020 from the Spanish Ministry of Education and Science 
and 2009SGR1040 from the Generalitat de Catalunya. Vincent Pilaud was partially supported by grant MTM2008-04699-C03-02 of the Spanish Ministry of Education and Science.}
\begin{document}

\begin{abstract}
  Simultaneously generalizing both neighborly and neighborly cubical
  polytopes, we introduce \defn{PSN polytopes}: their $k$-skeleton is
  combinatorially equivalent to that of a product of $r$~simplices.

  We construct PSN~polytopes by three different methods, the most
  versatile of which is an extension of Sanyal \& Ziegler's
  ``projecting deformed products'' construction to products of
  arbitrary simple polytopes. For general $r$ and $k$, the lowest
  dimension we achieve is $2k+r+1$.

  Using topological obstructions similar to those introduced by Sanyal
  to bound the number of vertices of Minkowski sums, we show that this
  dimension is minimal if we additionally require that the
  PSN~polytope is obtained as a projection of a polytope that is
  combinatorially equivalent to the product of $r$ simplices, when the
  dimensions of these simplices are all large \mbox{compared to~$k$.}
\end{abstract}

\maketitle

\section{Introduction}

\subsection{Definitions}

Let $\simplex_n$ denote the $n$-dimensional simplex. For any tuple
$\sub{n} \eqdef (n_1,\dots,n_r)$ of integers, we denote by
$\simplex_{\sub{n}}$ the product of simplices
$\simplex_{n_1}\times\dots\times\simplex_{n_r}$. This is a polytope
of dimension $\sum n_i$, whose non-empty faces are obtained as
products of non-empty faces of the simplices
$\simplex_{n_1},\dots,\simplex_{n_r}$. For example,
Figure~\ref{fig:prodSimplGraphs} represents the graphs
of~$\simplex_i\times\simplex_6$, for~${i\in \{1,2,3\}}$.

\begin{figure}[htbp]
   \centerline{\includegraphics[scale=.7]{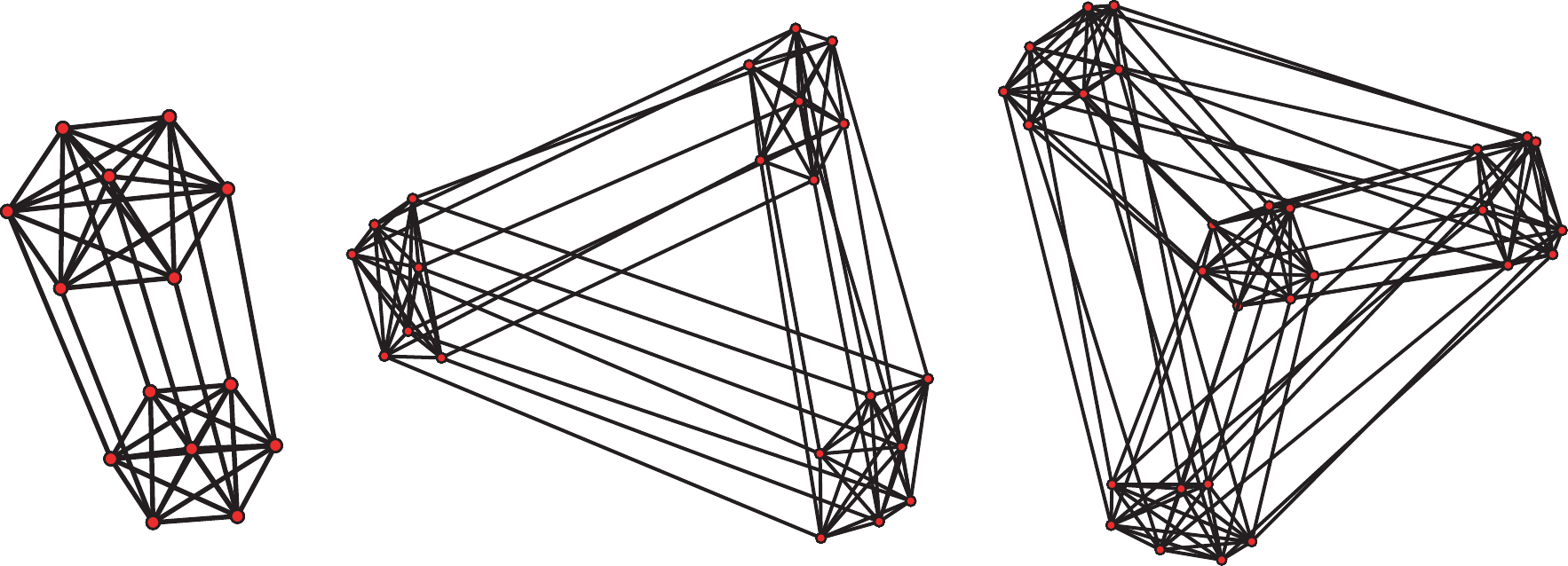}}
   \caption{{The graphs of the products
   $\simplex_{(i,6)}=\simplex_i\times\simplex_6$, for
   $i\in \{1,2,3\}$.}}
   \label{fig:prodSimplGraphs}
\end{figure}

We are interested in polytopes with the same ``initial'' structure as
these products.

\begin{definition}
  Let $k\ge0$ and $\sub{n} \eqdef (n_1,\dots,n_r)$, with $r\ge1$ and $n_i\ge
  1$ for all~$i$. A~convex polytope in some Euclidean space is
  \defn{$(k,\sub{n})$-prodsimplicial-neighborly}~---~or
  \defn{$(k,\sub{n})$-PSN} for short~---~if
  its $k$-skeleton is combinatorially equivalent to that of
  $\simplex_{\sub{n}} \eqdef \simplex_{n_1}\times\dots\times\simplex_{n_r}$.
\end{definition}

We choose the term ``prodsimplicial'' to shorten ``product
simplicial''.  This definition is essentially motivated by two
particular classes of PSN polytopes:
\begin{enumerate}
\item \defn{neighborly} polytopes arise when $r=1$;
\item \defn{neighborly cubical} polytopes~\cite{js-ncps-05,sz-capdd} arise when
  $\sub{n}=(1,1,\dots,1)$.
\end{enumerate}

\begin{remark}
  In the literature, a polytope is \defn{$k$-neighborly} if any subset
  of at most $k$ of its vertices forms a face. Observe that such a
  polytope is $(k-1,n)$-PSN with our notation.
\end{remark}

The product $\simplex_{\sub{n}}$ is a $(k,\sub{n})$-PSN
polytope of dimension $\sum n_i$, for each~$k$ with ${0\le k\le \sum
n_i}$. We are naturally interested in finding $(k,\sub{n})$-PSN
polytopes of smaller dimensions.  For example, the cyclic polytope
$C_{2k+2}(n+1)$ is a $(k,n)$-PSN~polytope of dimension~$2k+2$.  We
denote by~$\delta(k,\sub{n})$ the smallest possible dimension of a
$(k,\sub{n})$-PSN polytope.

\medskip
PSN polytopes can be obtained by projecting the
product~$\simplex_{\sub{n}}$, or a combinatorially equivalent
polytope, onto a smaller subspace. For example, the cyclic polytope
$C_{2k+2}(n+1)$ (just like any polytope with $n+1$ vertices) can be
seen as a projection of the simplex $\simplex_n$ to $\R^{2k+2}$.

\begin{definition}
  A $(k,\sub{n})$-PSN polytope is
  \defn{$(k,\sub{n})$-projected-prodsimplicial-neighborly}~---~or
  \defn{$(k,\sub{n})$-PPSN} for short~---~if it is a projection of a
  polytope that is combinatorially equivalent to $\simplex_{\sub{n}}$.
\end{definition}

We denote by $\delta_{pr}(k,\sub{n})$ the smallest possible dimension of
a $(k,\sub{n})$-PPSN polytope.

\subsection{Outline and main results}

The present paper may be naturally divided into two parts. In the
first part, we present three methods for constructing low-dimensional
PPSN polytopes:
\begin{enumerate}
\item Reflections of cyclic polytopes;
\item Minkowski sums of cyclic polytopes;
\item Deformed Product constructions in the spirit of Sanyal \&
  Ziegler~\cite{z-ppp-04,sz-capdd}.
\end{enumerate}
The second part derives topological obstructions for the existence of
such objects, using techniques developed by Sanyal
in~\cite{s-tovnms-09} (see also \cite{rs-npps}) to bound the number of
vertices of Minkowski sums.  In view of these obstructions, our
constructions in the first part turn out to be optimal for a wide
range of parameters.

\bigskip 
We devote the remainder of the introduction to highlighting
our most relevant results. To facilitate the
navigation in the article, we label each result by the number
it actually receives later on.

\bigskip
\paragraph{\bf Constructions.}

Our first non-trivial example is a $(k,(1,n))$-PSN polytope in
dimension ${2k+2}$, obtained by reflecting the cyclic polytope
$C_{2k+2}(n+1)$ through a well-chosen~hyperplane:

\medskip\noindent\textbf{Proposition \ref{prop:reflect}.} {\it
  For any $k\ge0$, $n\ge2k+2$ and $\lambda\in\R$ sufficiently large, the polytope
  \[
     P
     \ \eqdef \
     \conv\left(\big\{(t_i,\dots,t_i^{2k+2})^T\ | \ 
     i\in[n+1]\big\}
     \ \cup \
     \big\{(t_i,\dots,t_i^{2k+1},\lambda-t_i^{2k+2})^T\ | \ 
     i\in[n+1]\big\}\right)
  \]
  is a $(k,(1,n))$-PSN polytope of dimension $2k+2$.
}

\medskip
For example, this provides us with
a $4$-dimensional polytope whose graph is the cartesian product
$K_2\times K_n$, for any $n\ge3$.

\medskip
Next, forming a well-chosen Minkowski sum of cyclic polytopes yields
explicit coordinates for $(k,\sub{n})$-PPSN polytopes:

\medskip\noindent\textbf{Theorem \ref{theo:UBminkowskiCyclic}.} {\it
  Let $k\ge0$ and $\sub{n} \eqdef (n_1,\dots,n_r)$ with $r\ge1$ and $n_i\ge1$ for
  all $i$.  There exist index sets $I_1,\dots, I_r\subset\R$, with
  $|I_i|=n_i$ for all~$i$, such that the polytope
  \[
     P
     \ \eqdef \ 
     \conv\{w_{a_1,\dots,a_r}\  |\ (a_1,\dots,a_r)\in I_1\times\dots\times
     I_r\} 
     \ \subset \ 
     \R^{2k+r+1}
  \]
  is $(k,\sub{n})$-PPSN, where
  $
    w_{a_1,\dots,a_r}
    \eqdef
    \big(a_1,\dots,a_r,\sum_{i\in[r]}
      a_i^2,\dots,\sum_{i\in[r]} a_i^{2k+2}\big)^T.
  $
  Consequently,
  \[
     \delta(k,\sub{n})
     \ \le \ 
     \delta_{pr}(k,\sub{n})
     \ \le \
     2k+r+1.
  \]
}

For $r=1$ we recover neighborly polytopes. 

\medskip Finally, we extend Sanyal \& Ziegler's technique of
``projecting deformed products of polygons''~\cite{z-ppp-04,sz-capdd}
to products of arbitrary simple polytopes: given a polytope~$P$ that
is combinatorially equivalent to a product of simple polytopes, we
exhibit a suitable projection that preserves the complete $k$-skeleton
of~$P$.  More concretely, we describe how to use colorings of the
graphs of the polar polytopes of the factors in the product to raise
the dimension of the preserved skeleton. The basic version of this
technique yields the following result:

\medskip
\noindent\textbf{Proposition \ref{prop:defp2}.} {\it
  Let $P_1,\dots,P_r$ be simple polytopes of respective
  dimension~$n_i$, and with $m_i$~many
  facets. Let~$\chi_i \eqdef \chi(\text{sk}_1 P_i^\polar)$ denote the
  chromatic number of the graph of the polar polytope
  $P_i^\polar$. For a fixed integer $d\le\sum_{i=1}^r n_i$, let
  $t$ be maximal such that $\sum_{i=1}^t n_i\le d$. Then there exists
  a $d$-dimensional polytope whose $k$-skeleton is combinatorially
  equivalent to that of the product $P_1\times\dots\times P_r$
  provided}
\[
  0 \ \le \ k \ \le \ \sum_{i=1}^r (n_i-m_i) + \sum_{i=1}^t
  (m_i-\chi_i)+
  \Lfloor\frac12\left(d-1+\sum_{i=1}^t(\chi_i-n_i)\right)\Rfloor.
\]
\medskip  

A family of polytopes that minimize the last summand are products of
\defn{even polytopes} (all 2-dimensional faces have an even
number of vertices). See Example~\ref{ex:even} for the details, and
the end of Section~\ref{sec:general} for extensions of this technique.

\medskip
Specializing the factors to simplices provides another
construction of PPSN~polytopes. When some of these simplices are small
compared to~$k$, this technique in fact yields our best examples of
PPSN~polytopes:

\medskip\noindent\textbf{Theorem \ref{theo:defp-ppsn}.} \begingroup\itshape
For any $k\ge0$ and $\sub{n} \eqdef (n_1,\dots,n_r)$ with ${1=n_1=\dots=n_s<n_{s+1}\le\dots\le n_r}$,
\[
   \delta_{pr}(k,\sub{n})
   \ \le \
   \begin{cases}
     2(k+r)-s-t   & \text{if } 3s \le 2k+2r, \\
     2(k+r-s)+1   & \text{if } 3s = 2k+2r+1, \\
     2(k+r-s+1)   & \text{if } 3s \ge 2k+2r+2,
   \end{cases}
\]
where $t\in\{s,\dots,r\}$ is maximal such that  $3s+\sum_{i=s+1}^{t}(n_i+1)\ \le \ 2k+2r$.
\endgroup
\medskip

If $n_i=1$ for all $i$, we recover the neighborly cubical polytopes of
\cite{sz-capdd}.

\medskip
\paragraph{\bf Obstructions}

In order to derive lower bounds on the minimal dimension
$\delta_{pr}(k,\sub{n})$ that a \mbox{$(k,\sub{n})$-PPSN} polytope can
have, we apply and extend a method due to
Sanyal~\cite{s-tovnms-09}. For any projection which preserves the
$k$-skeleton of $\simplex_{\sub{n}}$, we use Gale duality to construct
a simplicial complex that can be embedded in a certain dimension. The
argument is then a topological obstruction based on Sarkaria's
criterion for the embeddability of a simplicial complex in terms of
colorings of Kneser graphs \cite{m-ubut-03}. We obtain the following
result:

\medskip\noindent\textbf{Corollary \ref{cor:topObstr}.} \begingroup\itshape
Let $\sub{n} \eqdef (n_1,\dots,n_r)$ with $1=n_1=\dots=n_s<n_{s+1}\le\dots\le
n_r$. 
  \begin{enumerate}
  \item If 
  \[
   0\ \le \ k \ \le \ \sum_{i=s+1}^r \fracfloor{n_i-2}{2} + 
   \max\left\{0,\fracfloor{s-1}{2}\right\}, 
  \]
  then $\delta_{pr}(k,\sub{n}) \ge 2k+r-s+1$.

  \item If $k\ge \Lfloor\frac12 \sum_i n_i\Rfloor$ then $\delta_{pr}(k,\sub{n})\ge \sum_i n_i$.
  \end{enumerate}
\endgroup

In particular, the upper and lower bounds provided by
Theorem~\ref{theo:UBminkowskiCyclic} and Corollary~\ref{cor:topObstr}
match over a wide range of parameters:

\begin{theorem}
\label{theo:mainResult}
Let $\sub{n} \eqdef (n_1,\dots,n_r)$ with $r\ge1$ and $n_i\ge2$ for
all~$i$. For any~$k$ such that $0\le k\le \sum_{i\in [r]}
\fracfloor{n_i-2}{2}$, the smallest $(k,\sub{n})$-PPSN polytope has
dimension exactly~$2k+r+1$. In other words:
  \[ 
    \delta_{pr}(k,\sub{n}) 
    \ = \
    2k+r+1.
  \]
\end{theorem}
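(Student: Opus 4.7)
The plan is to combine the upper bound from Theorem~\ref{theo:UBminkowskiCyclic} with the matching lower bound extracted from Corollary~\ref{cor:topObstr}. First I would invoke Theorem~\ref{theo:UBminkowskiCyclic} directly: for any $k\ge 0$ and any $\sub{n}$, the Minkowski sum of cyclic polytopes constructed there is an explicit $(k,\sub{n})$-PPSN polytope in dimension $2k+r+1$, so $\delta_{pr}(k,\sub{n}) \le 2k+r+1$ without any extra hypotheses on $\sub{n}$.

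The second step is to extract the matching lower bound from clause~(1) of Corollary~\ref{cor:topObstr}. The key observation is that the standing hypothesis $n_i\ge 2$ for every $i$ forces $s=0$ in the notation of that corollary, since $s$ is defined as the number of initial indices with $n_i=1$. Substituting $s=0$, the term $\max\{0,\lfloor (s-1)/2\rfloor\}$ vanishes (because $\lfloor -1/2\rfloor=-1$), and the allowed range collapses to $0\le k\le \sum_{i=1}^r \lfloor (n_i-2)/2\rfloor$, which is precisely the range assumed in the theorem. The conclusion $\delta_{pr}(k,\sub{n})\ge 2k+r-s+1$ then reads $\delta_{pr}(k,\sub{n})\ge 2k+r+1$, matching the upper bound.

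I do not expect a genuine obstacle at this point: the theorem is, in effect, the observation that the constructive regime of Theorem~\ref{theo:UBminkowskiCyclic} and the obstructive regime of Corollary~\ref{cor:topObstr} meet exactly along the slice $s=0$. The only remaining task is the routine bookkeeping that the hypotheses of the two results are simultaneously satisfied when $n_i\ge 2$ and $0\le k\le \sum_i \lfloor (n_i-2)/2\rfloor$, and that the numerical values of the two bounds coincide there, both of which we have just verified.
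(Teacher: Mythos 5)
Your proof is correct and takes exactly the approach the paper intends: combine the upper bound $\delta_{pr}(k,\sub{n}) \le 2k+r+1$ from Theorem~\ref{theo:UBminkowskiCyclic} (which holds with no restriction on $\sub{n}$) with the lower bound $\delta_{pr}(k,\sub{n}) \ge 2k+|R|+1 = 2k+r+1$ from part~(1) of Corollary~\ref{cor:topObstr}, noting that $n_i\ge 2$ for all $i$ forces $S=\emptyset$ and $R=[r]$, which makes both the range hypothesis and the numerical bound collapse to the stated form. The paper does not spell out this bookkeeping, so your write-up if anything supplies more detail than the original text.
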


\begin{remark}
  During the final stages of completing this paper, we learned that
  R\"orig and Sanyal~\cite{rs-npps} also applied Sanyal's topological
  obstruction method to derive lower bounds on the target dimension of
  a projection preserving skeleta of different kind of products
  (products of polygons, products of simplices, and wedge products of
  polytopes). In particular, for a product
  $\simplex_n\times\dots\times\simplex_n$ of $r$ identical simplices, $r\geq 2$,
  they obtain our Theorem~\ref{theo:topObstr-small-k} and a result
  (their Theorem~4.5) that is only slightly weaker than 
  Theorem~\ref{theo:topObstr-all-k} in this setting (compare with Sections 
  \ref{sec:ExplicitLowerBounds} and \ref{sec:ComparisonWithRSBounds}).
\end{remark}

\section{Constructions from cyclic polytopes}

Let $t\mapsto \mu_d(t) \eqdef (t,t^2,\dots,t^d)^T$ be the \defn{moment
  curve} in $\mathbb{R}^d$, $t_1<t_2<\dots<t_n$ be $n$ distinct real
numbers and $C_d(n) \eqdef \conv\{\mu_d(t_i)\ | \ i\in[n]\}$ denote the
\defn{cyclic polytope} in its realization on the moment curve. We
refer to~\cite[Theorem 0.7]{z-lp-95}
and~\cite[Corollary~6.1.9]{lrs-tri} for combinatorial properties
of~$C_d(n)$, in particular \defn{Gale's Evenness Criterion} which
characterizes the index sets of \defn{upper} and \defn{lower} facets
of~$C_d(n)$.

Cyclic polytopes yield our first examples of
PSN polytopes:

\begin{example}
  For any integers $k\ge0$ and $n\ge2k+2$, the cyclic polytope~${C_{2k+2}(n+1)}$ is $(k,n)$-PPSN.
\end{example}

\begin{example}
\label{ex:prodcyclic}
   For any $k\ge0$ and $\sub{n} \eqdef (n_1,\dots,n_r)$ with $r\ge1$ and $n_i\ge1$ for all $i$, define
   $I \eqdef \{i\in[r]\ | \  n_i\ge2k+3\}$. Then the product
\[
   \prod_{i\in I}C_{2k+2}(n_i+1)\times\prod_{i\notin I}
   \simplex_{n_i}
\]
is a $(k,\sub{n})$-PPSN polytope of dimension $(2k+2)|I|+\sum_{i\notin
  I} n_i$ (which is smaller than $\sum n_i$ when $I$ is
nonempty). Consequently,
\[
  \delta(k,\sub{n})
  \ \le \
  \delta_{pr}(k,\sub{n})
  \ \le \
  (2k+2)|I|+\sum_{i\notin I} n_i.
\]
\end{example}

\subsection{Reflections of cyclic polytopes}

Our next example deals with the special case of the product
$\simplex_1\times\simplex_n$ of a segment with a simplex. Using
products of cyclic polytopes as in Example~\ref{ex:prodcyclic}, we can
realize the $k$-skeleton of this polytope in dimension $2k+3$. We can
lower this dimension by $1$ by reflecting the cyclic polytope
$C_{2k+2}(n+1)$ through a well-chosen hyperplane:

\begin{proposition}
  \label{prop:reflect}
For any $k\ge0$, $n\ge2k+2$ and $\lambda\in\R$ sufficiently large, the polytope
\[
   P
   \ \eqdef \
   \conv\left(\big\{(t_i,\dots,t_i^{2k+2})^T\ | \ 
   i\in[n+1]\big\}
   \ \cup \
   \big\{(t_i,\dots,t_i^{2k+1},\lambda-t_i^{2k+2})^T\ | \ 
   i\in[n+1]\big\}\right)
\]
is a $(k,(1,n))$-PSN polytope of dimension $2k+2$.
\end{proposition}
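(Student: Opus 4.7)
The strategy is to identify the vertices of $P$ with those of $\simplex_1\times\simplex_n$ via $v_i^+\leftrightarrow(0,i)$ and $v_i^-\leftrightarrow(1,i)$, and to check that under this bijection the $k$-faces match. The $k$-faces of $\simplex_1\times\simplex_n$ fall into three families: top and bottom $k$-simplices $\{0\}\times F$ and $\{1\}\times F$ for $(k+1)$-subsets $F\subseteq[n+1]$, and ``matched'' prisms $\{0,1\}\times F$ for $k$-subsets $F$. I will show that the $k$-faces of $P$ are exactly the corresponding vertex subsets, and no others.

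For each candidate face I exhibit an explicit supporting hyperplane using the squared-Vandermonde trick. For the first family with index set $S$ of size $k+1$, the polynomial $p(t) \eqdef -\prod_{i\in S}(t-t_i)^2$ is nonpositive of degree $2k+2$ with leading coefficient $-1$ and vanishes exactly on $\{t_i : i\in S\}$. Reading off its coefficients gives a normal vector $\alpha$ with $\alpha_{2k+2}=-1$ such that $\alpha\cdot v_i^+-b=p(t_i)\leq0$ with equality iff $i\in S$. The key observation is that $\alpha\cdot v_i^-=\alpha\cdot v_i^++(\lambda-2t_i^{2k+2})\alpha_{2k+2}$, and because $\alpha_{2k+2}<0$, taking $\lambda$ large enough pushes every $v_i^-$ strictly below the hyperplane; hence $\alpha$ realizes $S$ as a $k$-face of $P$. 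The second family is handled by the same argument applied to the reflection $x\mapsto(x_1,\ldots,x_{2k+1},\lambda-x_{2k+2})$, which is a symmetry of $P$ exchanging the two halves. For a matched face with $|S|=k$, I use $q(t)\eqdef-\prod_{i\in S}(t-t_i)^2$ of degree $2k$: extending its coefficient vector by two trailing zeros yields an $\alpha$ with $\alpha_{2k+2}=0$, so that $\alpha\cdot v_i^+=\alpha\cdot v_i^-$ automatically, and the negativity of $q$ off $\{t_i : i\in S\}$ gives a supporting hyperplane containing both lifts of $S$ and strictly separating the other vertices. An affine dimension count shows this matched face is indeed $k$-dimensional.

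The converse---that no other $k$-faces appear---relies on a short case analysis controlled by the sign of $\alpha_{2k+2}$. Suppose a face $V$ of $P$ contains $v_i^+$ and $v_j^-$ with $i\neq j$ but omits $v_i^-$. A supporting hyperplane $\alpha\cdot x=b$ would satisfy $\alpha\cdot v_i^+=\alpha\cdot v_j^-=b$ and $\alpha\cdot v_i^-<b$, which forces $(\lambda-2t_i^{2k+2})\alpha_{2k+2}<0$ and hence $\alpha_{2k+2}<0$ for $\lambda$ large; but then $\alpha\cdot v_j^+=\alpha\cdot v_j^--(\lambda-2t_j^{2k+2})\alpha_{2k+2}>b$, contradicting that $\alpha$ supports $P$. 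A symmetric argument rules out omitting $v_j^+$, so any face crossing both halves must be matched, and only those with $|S|=k$ have dimension~$k$. The main delicate point is the uniform threshold on $\lambda$: the inequalities depend on $S$ and the $t_i$, but since there are only finitely many candidate faces a single ``large enough'' $\lambda$ suffices for all of them at once. Finally $\dim P=2k+2$ because $\conv\{v_i^+\}=C_{2k+2}(n+1)$ is already full-dimensional for $n\geq 2k+2$.
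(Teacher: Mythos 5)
Your proof is correct, but it follows a genuinely different route from the paper's. The paper argues geometrically: it views $P$ as the convex hull of the cyclic polytope $Q \eqdef C_{2k+2}(n+1)$ with a far-away reflected copy, observes that this operation erects prisms over exactly those faces of $Q$ that are strictly preserved by the projection $\pi:\R^{2k+2}\to\R^{2k+1}$ (i.e., those lying in both an upper and a lower facet), uses that $\pi(Q)=C_{2k+1}(n+1)$ is $k$-neighborly to get prisms over all $(\le k-1)$-faces, and then invokes a short combinatorial lemma based on Gale's evenness criterion to show that the $k$-faces of $Q$ that are not faces of $C_{2k+1}(n+1)$ lie only in lower facets and therefore also survive. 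You instead bypass the combinatorics of cyclic polytopes entirely and exhibit explicit supporting hyperplanes: the $-\prod_{i\in S}(t-t_i)^2$ trick (which the paper uses later for the Minkowski-sum constructions in Proposition~\ref{prop:firstMinkowskiSum} and Theorem~\ref{theo:UBminkowskiCyclic}, but not here) produces the top and bottom simplicial faces, degree-$2k$ polynomials padded with zeros produce the prisms, and a clean sign analysis on $\alpha_{2k+2}$ shows that no unmatched face can cross both copies. This is more computational and self-contained; the paper's version is shorter because it reuses cyclic-polytope machinery but requires the reader to accept the picture of which faces are ``destroyed'' by taking the convex hull. Two places where your write-up glosses: (a) you should note that the same padded-polynomial construction certifies all faces of dimension strictly less than $k$ in each of the three families, since a $(k,\sub{n})$-PSN polytope must match the entire $k$-skeleton, not just the top-dimensional cells; and (b) the ``affine dimension count'' for matched prisms should spell out that the $2k$ points project to a $(k-1)$-simplex on the degree-$(2k+1)$ moment curve and differ only in the last coordinate, so their affine hull has dimension exactly $k$ (and is combinatorially a prism) once $\lambda\ne 2t_i^{2k+2}$. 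Neither issue is a real gap, just missing detail.
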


\begin{proof}
  The polytope $P$ is obtained as the convex hull of two copies of the
  cyclic polytope $C_{2k+2}(n+1)$. The first one
  $Q \eqdef \conv\{\mu_{2k+2}(t_i)\ | \ i\in[n+1]\}$ lies on the moment
  curve~$\mu_{2k+2}$, while the second one is obtained as a reflection of $Q$ with
  respect to a hyperplane that is orthogonal to the last coordinate
  vector $u_{2k+2}$ and sufficiently far away. During this process,
\begin{enumerate} 
\item we destroy all the faces of $Q$ only contained in upper facets
  of $Q$;
\item we create prisms over faces of $Q$ that lie in at least one
  upper and one lower facet of~$Q$. In other words, we create prisms
  over the faces of~$Q$ strictly preserved under the orthogonal
  projection $\pi:\R^{2k+2}\to\R^{2k+1}$ with kernel $\R u_{2k+2}$.
\end{enumerate}

The projected polytope $\pi(Q)$ is nothing but the cyclic polytope
$C_{2k+1}(n+1)$. Since this polytope is $k$-neighborly, any face $F$
of dimension at most~$k-1$ in~$Q$ is strictly preserved
by~$\pi$. Thus, we take the prism over all faces of $Q$ of dimension
at most~$k-1$.

Thus, in order to complete the proof that the $k$-skeleton of $P$ is
that of $\simplex_1\times\simplex_n$, it is enough to show that any
$k$-face of $Q$ remains in $P$. This is obviously the case if this
$k$-face is also a $k$-face of $C_{2k+1}(n+1)$, and follows from the
next combinatorial lemma otherwise.
\end{proof}

\begin{lemma}
  A $k$-face of $C_{2k+2}(n+1)$ which is not a $k$-face of
  $C_{2k+1}(n+1)$ is only contained in lower facets of
  $C_{2k+2}(n+1)$.
\end{lemma}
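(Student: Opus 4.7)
The plan is to prove the contrapositive: if a $(k{+}1)$-subset $F\subseteq\{1,\dots,n{+}1\}$ is contained in some upper facet $U$ of $C_{2k+2}(n{+}1)$, then $F$ is already a $k$-face of $C_{2k+1}(n{+}1)$, meaning $F$ lies in some facet of $C_{2k+1}(n{+}1)$. The main tool will be Gale's evenness criterion together with the standard classification: for $d=2k{+}2$, a facet $T$ of $C_{d}(n{+}1)$ is upper iff both the leading block of $T$ (the maximal run of chosen indices containing $1$) and the trailing block of $T$ (containing $n{+}1$) have odd length, all other blocks being of even length; for $d=2k{+}1$ a facet $T'$ is characterised by having even middle blocks together with leading and trailing lengths of opposite parity (so that their sum is the odd number $|T'|=2k{+}1$).

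I split into three cases according to whether $1$ and $n{+}1$ lie in $F$. \textbf{Case~1 ($1\notin F$):} set $T':=U\setminus\{1\}$. Deleting $1$ turns $U$'s leading block $\{1,\dots,\ell_1\}$ of odd length $\ell_1$ into a middle block $\{2,\dots,\ell_1\}$ of even length $\ell_1{-}1$; the trailing block stays of odd length, and all other middles are unaffected, so $T'$ is a facet of $C_{2k+1}(n{+}1)$ containing $F$. \textbf{Case~2 ($1\in F$, $n{+}1\notin F$):} symmetric, with $T':=U\setminus\{n{+}1\}$.

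\textbf{Case~3 ($\{1,n{+}1\}\subseteq F$):} here a reduction to a smaller cyclic polytope works, and in fact does not use the upper-facet hypothesis. Let $F':=F\setminus\{1,n{+}1\}$, a subset of $\{2,\dots,n\}$ of size $k{-}1$. Consider the cyclic polytope $C_{2k-1}(n{-}1)$ realized on $\mu_{2k-1}(t_2),\dots,\mu_{2k-1}(t_n)$; it is $(k{-}1)$-neighborly, so $F'$ is contained in some facet $T''\subseteq\{2,\dots,n\}$ of size $2k{-}1$. Set $T':=T''\cup\{1,n{+}1\}$, so $|T'|=2k{+}1$. Adjoining $1$ either lengthens the block of $T''$ containing $2$ by one (flipping its parity) or creates a fresh singleton leading block $\{1\}$ when $2\notin T''$; symmetrically at the other end. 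Starting from the Gale condition on $T''$ with respect to the index set $\{2,\dots,n\}$ (leading-plus-trailing length odd, middles even), a short sub-case analysis on whether $2\in T''$ and whether $n\in T''$ confirms that $T'$ satisfies the Gale condition for $C_{2k+1}(n{+}1)$. Moreover, that condition on pairs of non-chosen indices in $\{1,\dots,n{+}1\}$ reduces exactly to the Gale condition on $T''$ in $\{2,\dots,n\}$, because $1$ and $n{+}1$ now lie in $T'$ and therefore never play the role of non-chosen indices.

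The main obstacle is the block-parity bookkeeping in Case~3: four sub-cases arise according to the membership of $2$ and of $n$ in $T''$, and one has to keep straight which block plays the role of ``leading'' or ``trailing'' in each of the two index sets $\{2,\dots,n\}$ and $\{1,\dots,n{+}1\}$. The verifications are elementary in each sub-case, but must be executed carefully to confirm that the constructed $T'$ really satisfies Gale's criterion for $C_{2k+1}(n{+}1)$; once this is done, the lemma follows.
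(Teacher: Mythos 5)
Your argument is correct, and its overall strategy (Gale's evenness criterion, a case split on membership of the endpoints in $F$, and a reduction to the neighborliness of a smaller cyclic polytope) is the same as in the paper's proof. The case split is organised a bit differently, though. The paper conditions only on whether $n+1\in F$: if not, it deletes $n+1$ from the upper facet $G$ directly; if so, it passes to $F'=F\smallsetminus\{n+1\}$ of size $k$, uses $k$-neighborliness of $C_{2k}(n)$ to extend $F'$ to a facet, and adds $n+1$ back. Your version splits on membership of both $1$ and $n+1$, and in the third case passes to $F\smallsetminus\{1,n+1\}$ of size $k-1$ inside $C_{2k-1}(n-1)$. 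That extra symmetry pays off in a small way: since $1$ and $n+1$ both lie in your candidate facet $T'$, the Gale criterion for $T'$ in $[n+1]$ reduces verbatim to that for $T''$ in $\{2,\dots,n\}$, so the promised ``sub-case analysis on whether $2\in T''$ and whether $n\in T''$'' is actually unnecessary --- your own last sentence of Case~3 is the cleanest route, and you should just lead with it. The paper's two-case version is slightly shorter because its Case~B ($n+1\in F$) handles $1\in F$ and $1\notin F$ uniformly; your three-case version is a touch longer but notes in passing the stronger fact that the conclusion in Case~3 holds without invoking the upper-facet hypothesis at all. Either is a fine proof of the lemma.
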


\begin{proof}
  Let $F\subset[n+1]$ be a $k$-face of $C_{2k+2}(n+1)$. We assume that~$F$ is
  contained in at least one upper facet $G\subset[n+1]$
  of~$C_{2k+2}(n+1)$. Since the size of the final block of an upper facet of a cyclic polytope is odd,
  $G$ contains~$n+1$. If $n+1\in G\smallsetminus F$, then
  $G\smallsetminus\{n+1\}$ is a facet of $C_{2k+1}(n+1)$
  containing~$F$. Otherwise, $n+1\in F$, and
  $F' \eqdef F\smallsetminus\{n+1\}$ has only $k$ elements. Thus, $F'$~is a
  face of $C_{2k}(n)$, and can be completed to a facet
  of~$C_{2k}(n)$. Adding the index $n+1$ back to this facet, we obtain
  a facet of $C_{2k+1}(n+1)$ containing~$F$.  
  In both cases, we have shown that $F$ is a $k$-face of
  $C_{2k+1}(n+1)$.
\end{proof}

\subsection{Minkowski sums of cyclic polytopes}\label{subsec:minkowskiSumCyclicPolytopes}

Our next examples are Minkowski sums of cyclic polytopes. We first
describe an easy construction that avoids all technicalities, but only
yields $(k,\sub{n})$-PPSN polytopes in dimension $2k+2r$. After that,
we show how to reduce the dimension to $2k+r+1$, which according to
Corollary~\ref{cor:topObstr} is best possible for large~$n_i$'s.

\begin{proposition}\label{prop:firstMinkowskiSum}
  Let $k\ge0$ and $\sub{n} \eqdef (n_1,\dots,n_r)$ with $r\ge1$ and $n_i\ge1$ for all $i$.
  For any pairwise disjoint index sets $I_1,\dots, I_r\subset\R$, with $|I_i|=n_i$ for all~$i$, the
  polytope 
  \[
     P
     \ \eqdef \
     \conv\big\{v_{a_1,\dots,a_r}\  |\ (a_1,\dots,a_r)\in I_1\times\dots\times
     I_r\big\}
     \ \subset \
     \R^{2k+2r}
  \]
  is $(k,\sub{n})$-PPSN, where
  \[
    v_{a_1,\dots,a_r}
    \ \eqdef \
    \left(\sum_{i\in[r]} a_i,\sum_{i\in[r]}
      a_i^2,\dots,\sum_{i\in[r]} a_i^{2k+2r}\right)^T\in\R^{2k+2r}.
  \]
\end{proposition}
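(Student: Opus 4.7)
The plan is to view $P$ as a Minkowski sum of cyclic polytopes and realize it as a linear projection of a standard product of simplices. The key identity
\[ v_{a_1,\dots,a_r}\ =\ \sum_{i=1}^r \mu_{2k+2r}(a_i) \]
immediately shows that $P=C_1+\dots+C_r$, where $C_i\eqdef\conv\{\mu_{2k+2r}(a):a\in I_i\}$ is a cyclic polytope, all $r$ of them realized on the \emph{same} moment curve. To express $P$ as a projection of $\simplex_{\sub{n}}$, I would introduce one formal basis vector $e_a^{(i)}$ per element $a\in I_i$, realize $\tilde P\eqdef\prod_i\conv\{e_a^{(i)}:a\in I_i\}\cong\simplex_{\sub{n}}$ as the standard product of simplices, and observe that the linear map $\Pi\colon e_a^{(i)}\mapsto\mu_{2k+2r}(a)$ satisfies $\Pi(\tilde P)=P$. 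It then remains to show that $\Pi$ strictly preserves the $k$-skeleton of $\tilde P$.

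A $k$-face of $\tilde P$ has the form $F=\prod_i\conv\{e_a^{(i)}:a\in S_i\}$ for subsets $S_i\subseteq I_i$ with $\sum_i(|S_i|-1)\le k$, equivalently $N\eqdef\sum_i|S_i|\le k+r$. The decisive observation is that the disjoint union $I\eqdef\bigsqcup_i I_i$ lies on a single moment curve, so the ambient cyclic polytope $C_{2k+2r}(I)$ is $(k+r)$-neighborly; since $|\bigsqcup_i S_i|=N\le k+r$, the set $\bigsqcup_i S_i$ is itself a face of $C_{2k+2r}(I)$. The corresponding supporting linear functional $\phi_p$ on $\R^{2k+2r}$ --- equivalently, a polynomial $p$ of degree at most $2k+2r$ --- attains its maximum on $I$ precisely on $\bigsqcup_i S_i$. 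Since $(\phi_p\circ\Pi)(\sum_i e_{a_i}^{(i)})=\sum_i p(a_i)$, the pullback $\phi_p\circ\Pi$ is maximized on $\tilde P$ exactly at the vertices of $F$: this exhibits a supporting hyperplane of $\tilde P$ at $F$ which is itself a pullback from $\R^{2k+2r}$, and that is precisely the strict-preservation criterion, so $\Pi(F)$ is a face of $P$ combinatorially equivalent to $F$.

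What remains are routine verifications anchored by a single Vandermonde-type fact: any $N\le 2k+2r$ distinct points on the moment curve $\mu_{2k+2r}$ are linearly independent. From this one obtains at once the distinctness of the images $v_{a_1,\dots,a_r}$, $a_i\in S_i$, the equality $\dim\Pi(F)=\dim F=\sum_i(|S_i|-1)$, and --- using the general fact that every face of a Minkowski sum $\sum_i C_i$ is itself a sum of coherent faces of the summands --- the converse that every $\le k$-face of $P$ corresponds to a product $\prod_i S_i$. The main conceptual obstacle is the need to coordinate the $r$ face selections on the individual cyclic polytopes $C_i$ by one \emph{single} polynomial $p$; placing all the $I_i$ on a common moment curve in dimension $2k+2r$ converts this into a single neighborliness statement for the ambient cyclic polytope $C_{2k+2r}(I)$, and it is precisely this packing trick --- together with the bound $k+r$ on the neighborliness available in dimension $2k+2r$ --- that fixes the target dimension of this easy construction at $2k+2r$.
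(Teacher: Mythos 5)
Your proof is correct and takes essentially the same approach as the paper: where the paper explicitly constructs the degree-$(2k+2r)$ polynomial $f(t)=\prod_i\prod_{a\in A_i}(t-a)^2$ and reads off the supporting functional from its coefficients, you obtain the same separating polynomial abstractly by invoking the $(k+r)$-neighborliness of the ambient cyclic polytope on $\bigsqcup_i I_i$, whose standard proof is via precisely that product-of-squares polynomial. The only imprecision is the claim that a pullback supporting hyperplane ``is precisely the strict-preservation criterion'' --- it delivers conditions (i) and (iii) of Definition~\ref{def:spf} but not the combinatorial isomorphism (ii) --- though your Vandermonde remark does supply the missing affine injectivity, so the argument closes.
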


\begin{proof}
  The vertex set of  $\simplex_{\sub{n}}$ is
  indexed by $I_1\times\dots\times I_r$.  Let
  $A \eqdef A_1\times\dots\times A_r\subset I_1\times\dots\times I_r$
  define a $k$-face of $\simplex_{\sub{n}}$. Consider the polynomial
  \[
      f(t) 
      \ \eqdef \
      \prod_{i\in[r]}\prod_{a\in A_i} (t-a)^2
      \ = \
      \sum_{j=0}^{2k+2r} c_j t^j.
  \]
  Since $A$~indexes a $k$-face of~$\simplex_{\sub{n}}$, we know that $\sum
  |A_i|=k+r$, so that the degree of $f(t)$ is indeed $2k+2r$.
  Since $f(t)\ge0$, and equality holds if and only if
  $t\in\bigcup_{i\in[r]}
  A_i$, the inner product
  $\dotprod{(c_1,\dots,c_{2k+2r})}{v_{a_1,\dots,a_r}}$ equals
  \[
     (c_1,\dots, c_{2k+2r})
     \begin{pmatrix}
       \sum_{i\in[r]} a_i\\ \vdots \\ \sum_{i\in[r]} a_i^{2k+2r}
     \end{pmatrix}
     \ = \
     \sum_{i\in[r]}\sum_{j=1}^{2k+2r} c_j a_i^j
     \ = \
     \sum_{i\in[r]}
     \big(f(a_i)-c_0\big)
     \ \ge \
     -rc_0,
  \]
  with equality if and only if $(a_1,\dots,a_r)\in A$. Thus,
  $A$~indexes a face of~$P$ defined by the
  linear inequality $\sum_{i\in[r]} c_i x_i \ge -rc_0$.
  
  We thus obtain that the $k$-skeleton of~$P$ completely contains the $k$-skeleton of~$\simplex_{\sub{n}}$.
  Since~$P$ is furthermore a projection of~$\simplex_{\sub{n}}$, the faces of~$\simplex_{\sub{n}}$ are the only candidates
  to be faces of~$P$. We conclude that the $k$-skeleton of~$P$ is actually combinatorially equivalent to that of~$\simplex_{\sub{n}}$.
\end{proof}

To realize the $k$-skeleton of
$\simplex_{n_1}\times\dots\times\simplex_{n_r}$ even in
dimension~$2k+r+1$, we slightly modify this construction in the
following way.

\begin{theorem}
\label{theo:UBminkowskiCyclic}
Let $k\ge0$ and $\sub{n} \eqdef (n_1,\dots,n_r)$ with $r\ge1$ and $n_i\ge1$
for all $i$.  There exist pairwise disjoint index sets $I_1,\dots,
I_r\subset\R$, with $|I_i|=n_i$ for all~$i$, such that the polytope
  \[
     P
     \ \eqdef \ 
     \conv\{w_{a_1,\dots,a_r}\  |\ (a_1,\dots,a_r)\in I_1\times\dots\times
     I_r\} 
     \ \subset \ 
     \R^{2k+r+1}
  \]
  is $(k,\sub{n})$-PPSN, where
  \[
    w_{a_1,\dots,a_r}
    \ \eqdef \ 
    \bigg(a_1,\dots,a_r,\sum_{i\in[r]}
      a_i^2,\dots,\sum_{i\in[r]} a_i^{2k+2}\bigg)^T\in\R^{2k+r+1}.
  \]
\end{theorem}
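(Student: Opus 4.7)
\emph{Proof plan.}
The plan is to follow the strategy of the proof of Proposition~\ref{prop:firstMinkowskiSum}: the map $(a_1,\dots,a_r)\mapsto w_{a_1,\dots,a_r}$ extends linearly from a realization of $\simplex_{\sub{n}}$ onto $P$, so $P$ is a projection of $\simplex_{\sub{n}}$ and every face of $P$ is induced by a face of $\simplex_{\sub{n}}$. It therefore suffices, for every $k$-face $A = A_1 \times \cdots \times A_r$ of $\simplex_{\sub{n}}$ (equivalently $\sum_i |A_i| = k+r$), to exhibit a linear functional on $\R^{2k+r+1}$ strictly separating the vertices of $A$ from the other vertices of $P$. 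Writing such a functional as $\ell = (\alpha_1,\dots,\alpha_r,c_2,\dots,c_{2k+2})$ and evaluating at $w_{a_1,\dots,a_r}$ yields the separable expression $\sum_{i=1}^r g_i(a_i)$, where $g_i(t) \eqdef \alpha_i t + p(t)$ and $p(t) \eqdef \sum_{j=2}^{2k+2} c_j\, t^j$. The problem thus reduces to choosing $p$ and the $\alpha_i$'s so that each $g_i$ is minimized on $I_i$ exactly at $A_i$.

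To accomplish this I impose a Hermite-type ansatz: require $g_i(t) - m_i = \pi_i(t)^2\, h_i(t)$ with $\pi_i(t) \eqdef \prod_{a\in A_i}(t-a)$, where $h_i$ is a polynomial of degree $2k+2 - 2|A_i|$ that is positive on $I_i \smallsetminus A_i$. Demanding that $g_i - m_i$ vanish to order two at each $a \in A_i$ forces $\alpha_i = -p'(a)$ and $m_i = p(a) - a\, p'(a)$; these determine $\alpha_i$ and $m_i$ from $p$ provided that both $p'$ and $p - t\, p'$ are constant on each $A_i$. These consistency requirements amount to $2(|A_i|-1)$ independent linear conditions on $p$, summing over $i$ to $2k$ conditions inside the $(2k+1)$-dimensional space of admissible polynomials $p$, so a one-parameter family of algebraic solutions exists.

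The main obstacle is the positivity requirement for the $h_i$'s. The plan here is to exploit the freedom in choosing the index sets $I_i$: place them in pairwise disjoint and sharply ordered intervals with large gaps (for instance $I_i \subset (M_{i-1}, M_i)$ with $M_i - M_{i-1}$ arbitrarily large) and select the positive-leading-coefficient member of the one-parameter family. In the limit of large gaps the polynomial $p$ admits an explicit leading-order description; in the model case $r = 2$ with $A_{i^*} = \{b_1,b_2\}$ the natural choice $p(t) = (t-b_1)^2(t-b_2)^2$ makes $h_{i^*} \equiv 1$ and each remaining $h_i$ a quadratic whose discriminant is, up to a positive factor, $-8(a-b_1)(a-b_2)$, hence negative whenever the other $I_i$ lies entirely to one side of $\{b_1,b_2\}$. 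A sign analysis extending this computation to general $A$, combined with continuity of the solution in the parameter and in the positions of the $I_i$'s, should then yield an open set of configurations for which all $h_i$ are simultaneously positive on their respective $I_i$'s, completing the construction.
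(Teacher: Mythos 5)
Your setup is essentially the paper's: you recast the supporting functional for a $k$-face $A = A_1\times\cdots\times A_r$ as a sum $\sum_i g_i(a_i)$ of univariate functions sharing the higher-degree part $p$, and you impose the same Hermite-type decomposition $g_i - m_i = \pi_i^2 h_i$ (the paper writes $f_F = Q_i\prod_{a\in A_i}(t-a)^2 + s_i t + r_i$ with an extra constant term and a sign flip, but these are equivalent). Your dimension count of $2k$ independent linear conditions in a $(2k+1)$-dimensional space also matches the paper's square $2k(r-1)\times 2k(r-1)$ system after fixing the scaling by taking $f_F$ monic.

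The genuine gap is the positivity of the $h_i$'s, and, bundled with it, the verification that your $2k$ consistency conditions really are independent so that the one-parameter family exists as advertised. Your plan for both is a leading-order analysis under ``large gaps between ordered intervals,'' but you only carry this out for the model case ($r=2$, one factor a segment), and the phrase ``a sign analysis extending this computation to general $A$ \dots should then yield'' is where the proof actually needs to happen. The obstacle is real: when several $A_i$ each have more than one element, the sign pattern of the quadratic-discriminant computation does not extend in any obvious way, and having the $I_i$'s far apart from one another is not the same as having each $I_i$ tightly clustered; without the latter, $\pi_i^2$ dips substantially between the roots and there is no a priori reason the corresponding $h_i$ stays positive there.

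The paper's resolution is a degeneration argument that your proposal is missing. Collapse each block $A_i$ to a single point $\bar a_i$ (so $\prod_{a\in A_i}(t-a)^2$ becomes $(t-\bar a_i)^{2|A_i|}$), then observe that requiring $f_F$ to have the form $\bar Q_i(t)(t-\bar a_i)^{2|A_i|} + s_i t + r_i$ is equivalent, after differentiating twice, to requiring $f_F''(t) = R_i(t)(t-\bar a_i)^{2(|A_i|-1)}$. Because $\sum_i 2(|A_i|-1) = 2k = \deg f_F''$, this degenerate system has a \emph{unique} explicit solution
\[
R_i(t) = (2k+2)(2k+1)\prod_{j\neq i}(t-\bar a_j)^{2(|A_j|-1)},
\]
which gives nondegeneracy of the linear system, and from $f_F'' = g \ge 0$ (vanishing only at finitely many points) one gets strict convexity of $\bar f_i$, hence $\bar Q_i > 0$ for $t\neq 0$, and $\bar Q_i(0) > 0$ by a direct coefficient comparison. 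Continuity in the index sets then transfers both nondegeneracy and positivity to $I_i$ chosen sufficiently close to the $\bar a_i$'s (a ``tight clusters'' condition, not a ``large gaps'' condition), and finiteness of the set of $k$-faces lets one meet all closeness requirements simultaneously. Without this mechanism, or some substitute of comparable strength, your proposal does not yet constitute a proof.
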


\begin{proof}
  We will choose the index sets $I_1,\dots, I_r$ to be sufficiently
  separated in a sense that will be made explicit later in the
  proof. For each $k$-face~$F$ of $\simplex_{\sub{n}}$, indexed by
  $A_1\times\dots\times A_r\subset I_1\times\dots\times I_r$, our
  choice of the~$I_i$'s will ensure the existence of a monic
  polynomial
  \[
    f_F(t)
    \ \eqdef \
    \sum_{j=0}^{2k+2} c_jt^j, 
  \]
  which, for all $i\in[r]$, can be decomposed as 
  \[
    f_F(t)
    \ = \
   Q_i(t)\prod_{a\in A_i}(t-a)^2  +s_it+r_i,
 \]
 where $Q_i(t)$ is an everywhere positive polynomial of degree
 $2k+2-2|A_i|$, and $r_i,s_i\in\R$.  Assuming the existence of such a decomposable
 polynomial~$f_F$, we built from its coefficients the vector
\[
   n_F
   \ \eqdef \ 
   (s_1-c_1,\dots,s_r-c_1,-c_2,-c_3,\dots,-c_{2k+2})
   \ \in \ 
   \R^{2k+r+1},
\]
and prove that $n_F$ is normal to a supporting hyperplane
for~$F$. Indeed, for any $r$-tuple $(a_1,\dots,a_r)\in I_1\times\dots\times
I_r$, the inner product $\dotprod{n_F}{w_{a_1,\dots,a_r}}$ satisfies
the following inequality:
\begin{eqnarray*}
\dotprod{n_F}{w_{a_1,\dots,a_r}}
&=& \sum_{i\in[r]}\left(s_i a_i-\sum_{j=1}^{2k+2} c_ja_i^j\right)\\
&=& \sum_{i\in[r]}\left(s_ia_i+c_0-f_F(a_i)\right) \\
&=& \sum_{i\in[r]}\left(c_0-Q_i(a_i)\prod_{a\in A_i}(a_i-a)^2-r_i\right)
\ \le\  rc_0-\sum_{i\in[r]} r_i.
\end{eqnarray*}
Since the $Q_i$'s are everywhere positive, equality holds if and only
if ${(a_1,\dots,a_r)\in A_1\times\dots\times A_r}$. Given the existence
of a decomposable polynomial~$f_F$, this proves that
$A_1\times\dots\times A_r$ indexes all $w_{a_1,\dots,a_r}$'s that lie
on a face $F'$ in $P$, and they of course span $F'$ by definition of
$P$. To prove that $F'$ is combinatorially equivalent to $F$, it
suffices to show that each $w_{a_1,\dots,a_r}\in F'$ is in fact a
vertex of $P$, since $P$ is a projection of $\simplex_{\sub{n}}$. This
can be shown with the normal vector $(2a_1,\dots,2a_r,-1,0,\dots,0)$,
using the same calculation as before.

As in the proof of Proposition~\ref{prop:firstMinkowskiSum}, this ensures that the
$k$-skeleton of~$P$ completely contains the $k$-skeleton of~$\simplex_{\sub{n}}$,
and we argue that they actually coincide since~$P$ is furthermore a projection of~$\simplex_{\sub{n}}$.

Before showing how to choose the index sets $I_i$ that enable us to
construct the polynomials~$f_F$ in general, we illustrate the proof on
the smallest example.
\end{proof}

\begin{example}
  Let $k=1$ and $\sub{n} \eqdef (n_1,n_2)$. Choose the index sets $I_1$, $I_2\subset\R$ with $|I_1|=n_1$, $|I_2|=n_2$ and 
  separated in the sense that the largest element of~$I_1$ be smaller
  than the smallest element of~$I_2$. For any $1$-dimensional face $F$
  of~$\simplex_{\sub{n}}$ indexed by $\{a,b\}\times\{c\}\subset I_1\times I_2$,
  consider the polynomial $f_F$ of degree $2k+2=4$:
  \[
    f_F(t)
    \ \eqdef \
    (t-a)^2(t-b)^2
    \ = \
    (t^2+\alpha t+\beta)(t-c)^2 + s_2t + r_2,
  \]
  where
  \begin{eqnarray*}
    \alpha & = & 2(-a-b+c), \\ 
    \beta  & = & a^2 + b^2 + 3c^2 + 4ab - 4ac - 4bc, \\
    r_2    & = &  a^2b^2 - \beta c^2, \\
    s_2    & = &  -2a^2b - 2ab^2 - \alpha c^2 + 2\beta c.
  \end{eqnarray*}
  Since the index sets $I_1$, $I_2$ are separated, the discriminant
  $\alpha^2-4\beta = -8(c-a)(c-b)$ is negative, which implies that the
  polynomial $Q_2(t)=t^2+\alpha t+\beta$ is positive for all values
  of~$t$. A symmetric formula holds for the $1$-dimensional faces of
  $\simplex_{\sub{n}}$ whose index sets are of the form $\{a\}\times\{b,c\} \subset I_1 \times I_2$.
\end{example}

\begin{proof}[Proof of Theorem~\ref{theo:UBminkowskiCyclic}, continued]
  We still need to show how to choose the index sets $I_i$ that enable
  us to construct the polynomials~$f_F$ in general.  Once we have
  chosen these index sets, finding~$f_F$ is equivalent to the task of
  finding polynomials~$Q_i(t)$ such that

\begin{enumerate}[(i)]
\item $Q_i(t)$ is monic of degree $2k+2-2|A_i|$.
\item The $r$ polynomials $f_i(t) \eqdef Q_i(t)\prod_{a\in A_i}(t-a)^2$ are
  equal, up to the coefficients on $t^0$ and~$t^1$.
\item $Q_i(t)>0$ for all $t\in\R$.
\end{enumerate}

The first two items form a linear system equations on the
coefficients of the $Q_i(t)$'s which has the same number of equations
as variables, namely~$2k(r-1)$. We show that it has a unique solution if one
chooses the correct index sets~$I_i$, and we postpone the discussion of requirement
(iii) to the end of the proof. To do this, choose distinct reals $\bar
a_1,\dots,\bar a_r\in\R$ and look at the similar equation system:

\begin{enumerate}[(i)]
\item $\bar Q_i(t)$ are monic polynomials of degree $2k+2-2|A_i|$.
\item The $r$ polynomials $\bar f_i(t) \eqdef \bar Q_i(t)(t-\bar
  a_i)^{2|A_i|}$ are equal, up to the coefficients on $t^0$ and~$t^1$.
\end{enumerate}

The first equation system moves into the second when we deform the points of the sets $A_i$ continuously to $\bar a_i$, respectively. By continuity of the determinant, if the second equation system has a unique solution then so has the first equation system as long as we chose the sets $I_i$ close enough to the $\bar a_i$'s for all~$i$. Observe that in the end, we can fulfill all these closeness conditions required for all $k$-faces of $\simplex_{\sub{n}}$ since there are only finitely many $k$-faces.

Note that a polynomial $\bar f_i(t)$ of degree $2k+2$ has the form 
\begin{equation}
\label{eqfstarhasform}
\bar Q_i(t)(t-\bar a_i)^{2|A_i|}+ s_it+r_i,
\end{equation}
for a monic polynomial $\bar Q_i$ and some reals $s_i$ and $r_i$ if and only if $\bar f''_i(t)$ has the form
\begin{equation}
\label{eqfprimeprimestarhasform}
R_i(t)(t-\bar a_i)^{2(|A_i|-1)},
\end{equation}
for some polynomial $R_i(t)$ with leading coefficient
$(2k+2)(2k+1)$. The backward direction can be settled by assuming,
without loss of generality, that $\bar a_i=0$. Indeed, otherwise make
a change of variables $(t-\bar a_i)\mapsto t$ and then integrate
\eqref{eqfprimeprimestarhasform} twice (with constants of integration
equal to zero) to obtain \eqref{eqfstarhasform}.

Therefore the second equation system is equivalent to the following third one:

\begin{enumerate}[(i)]
\item $R_i(t)$ are polynomials of degree $2k-2(|A_i|-1)$ with leading coefficient $(2k+2)(2k+1)$.
\item The $r$ polynomials $g_i(t) \eqdef R_i(t)(t-\bar a_i)^{2(|A_i|-1)}$ all equal the same polynomial, say~$g(t)$.
\end{enumerate}

Since $\sum_i 2(|A_i|-1)=2k$, this system of equations has the unique
solution
$$R_i(t) = (2k+2)(2k+1)\prod_{j\neq i} (t-\bar a_j)^{2(|A_j|-1)},$$
with
$$g(t) = (2k+2)(2k+1)\prod_{j\in[r]} (t-\bar a_j)^{2(|A_j|-1)}.$$

Therefore, the first two systems of equations both have a unique solution
(as long as the $I_i$'s are chosen sufficiently close to the $a_i$'s).
It thus only remains to deal with the positivity requirement (iii).

In the unique solution of the second equation system, the polynomial
$\bar f_i(t)$ is obtained by integrating $g_i(t)$ twice with some
specific integration constants. For a fixed $i$, we can again assume
$\bar a_i=0$. Then both integration constants were chosen to be zero
for this~$i$, hence $\bar f_i(0)=\bar f'_i(0)=0$. Since $g_i$ is
non-negative and zero only at isolated points, $\bar f_i$ is strictly
convex, hence non-negative and zero only at $t=0$. Therefore $\bar
Q_i(t)$ is positive for $t\neq 0$. Since we chose $\bar a_i=0$, we can
quickly compute the correspondence between the coefficients of $\bar
Q_i(t)=\sum_j \bar q_{i,j}t^j$ and of $R_i(t)=\sum_j r_{i,j}t^j$:
$$r_{i,j} = \big(2|A_i|(2|A_i|-1)+4j|A_i|+j(j-1)\big)\bar q_{i,j}.$$
In particular,
$$\bar Q_i(0)=\bar q_{i,0}=\frac{r_{i,0}}{2|A_i|(2|A_i|-1)}=\frac{R_i(0)}{2|A_i|(2|A_i|-1)}>0,$$
therefore $\bar Q_i(t)$ is everywhere positive.
Since the solutions of linear equation systems move continuously when one deforms the entries of the equation system by a homotopy,
this ensures that $Q_i(t)$ is everywhere positive if $I_i$ is chosen close enough to $\bar a_i$. The positivity of $Q_i(t)$ finishes the proof.
\end{proof}

\section{Projections of deformed products of simple polytopes}

In the previous section, we saw an explicit construction of polytopes
whose $k$-skeleton is equivalent to that of a product of simplices. In
this section, we provide another construction of $(k,\sub{n})$-PPSN
polytopes, using Sanyal \& Ziegler's technique of ``projecting
deformed products of polygons''~\cite{z-ppp-04,sz-capdd} and
generalizing it to products of arbitrary simple polytopes. This generalized technique consists in
projecting a suitable polytope that is combinatorially
equivalent to a given product of simple polytopes in such a way as to
preserve its complete $k$-skeleton. The special case of products of
simplices then yields $(k,\sub{n})$-PPSN polytopes.

\subsection{General situation}
\label{sec:general}

We first discuss the general setting: given a product
$P \eqdef P_1\times\dots\times P_r$ of simple polytopes, we construct a
polytope $\defP$ that is combinatorially equivalent to~$P$ and whose
$k$-skeleton is preserved under the projection onto the first $d$
coordinates.

\subsubsection{Deformed products of simple polytopes}

Let $P_1,\dots,P_r$ be \defn{simple} polytopes of respective
dimensions $n_1,\dots, n_r$ and facet descriptions $P_i =
\{x\in\R^{n_i}\ | \ A_i x\le b_i\}$. Here, each matrix
$A_i\in\R^{m_i\times n_i}$~has one row for each of the $m_i$~facets
of~$P_i$, and $b_i\in\R^{m_i}$. The product $P \eqdef P_1\times\dots\times
P_r$ then has dimension $n \eqdef \sum_{i\in[r]} n_i$, and its facet
description is given by the $m \eqdef \sum_{i\in[r]} m_i$ inequalities
\[
\begin{pmatrix}
A_1 & & \\
& \ddots & \\
& & A_r
\end{pmatrix}
x \ \le \
\begin{pmatrix}
b_1 \\
\vdots \\
b_r
\end{pmatrix}.
\]
The left hand $m\times n$ matrix, whose blank entries are all zero,
shall be denoted by~$A$. It is proved in \cite{az-dpmsp-99} that for
any matrix~$\defA$ obtained from~$A$ by \defn{arbitrarily} changing
the zero entries above the diagonal blocks, there exists a right-hand
side~$\defb$ such that the deformed polytope~$\defP$ defined by the
inequality system $\defA x\le \defb$ is combinatorially equivalent
to~$P$. The equivalence is the obvious one: it maps the facet defined
by the $i$-th row of~$A$ to the one given by the $i$-th row
of~$\defA$, for all~$i$. Following~\cite{sz-capdd}, we will use this
``deformed product'' construction in such a way that the projection
of~$\defP$ to the first $d$~coordinates preserves its $k$-skeleton in
the following sense.

\subsubsection{Preserved faces and the Projection Lemma}

For integers $n>d$, let $\pi:\R^n\to\R^d$ denote the orthogonal
projection to the first $d$ coordinates, and $\tau:\R^n\to\R^{n-d}$
denote the dual orthogonal projection to the last $n-d$ coordinates.
Let $P$ be a full-dimensional simple polytope in $\R^n$, with $0$ in
its interior.  The following notion of preserved faces ~---~see
Figure~\ref{fig:projection}~---~will be used extensively at the end of
this paper:

\begin{definition}[\cite{z-ppp-04}]
\label{def:spf}
  A proper face $F$ of a polytope $P$ is \defn{strictly preserved}
  under  $\pi$ if
\begin{enumerate}[(i)]
\item $\pi(F)$ is a face of $\pi(P)$,
\item $F$ and $\pi(F)$ are combinatorially isomorphic, and
\item $\pi^{-1}(\pi(F))$ equals $F$.
\end{enumerate}
\end{definition}

\begin{figure}[htbp]
   \centerline{\includegraphics[scale=1]{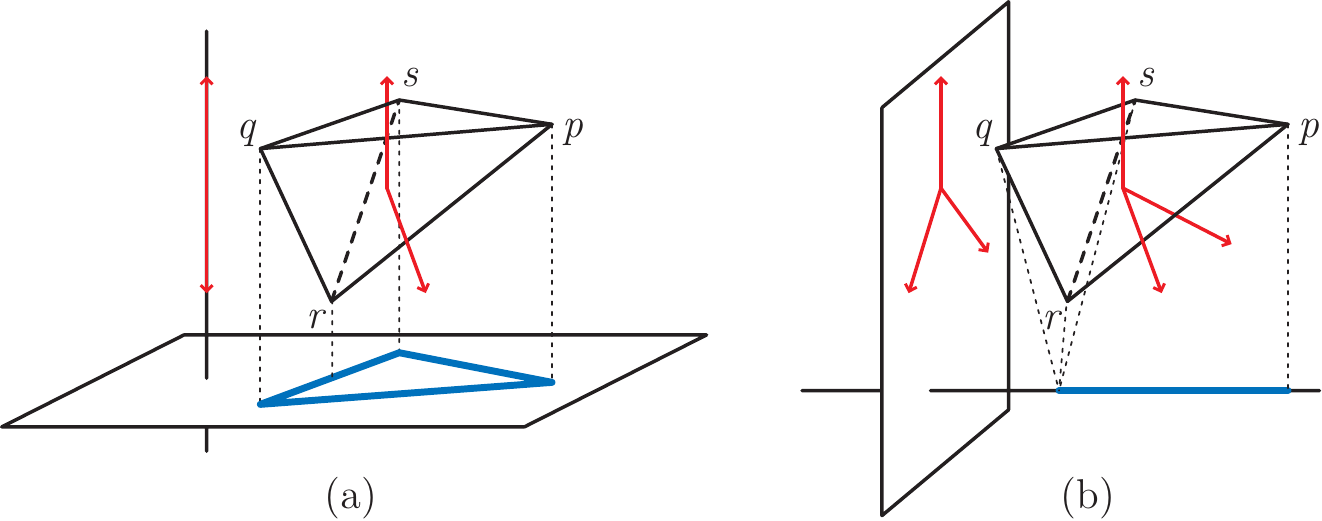}}
   \caption{{(a) Projection of a tetrahedron onto $\R^2$: the edge
    $pq$ is strictly preserved, while neither the edge $qr$, nor the
    face $\textit{qrs}$, nor the edge $\textit{qs}$ are (because of
    conditions (i), (ii) and (iii) respectively). (b) Projection of a
    tetrahedron to $\R$: only the vertex~$p$ is strictly
    preserved.}}
    \label{fig:projection}
\end{figure}

The characterization of strictly preserved faces of $P$ uses the
normal vectors of the facets of $P$. Let $F_1,\dots,F_m$ denote the
facets of $P$. For all $i\in[m]$, let $f_i$ denote the normal vector
to $F_i$, and let $g_i \eqdef \tau(f_i)$. For any face $F$ of $P$, let
$\varphi(F)$ denote the set of indices of the facets of~$P$ containing
$F$, \ie such that $F=\bigcap_{i\in \varphi(F)} F_i$.

\begin{lemma}[Projection Lemma~\cite{az-dpmsp-99,z-ppp-04}]\label{lem:projection}
A face $F$ of the polytope $P$ is strictly preserved under the
projection $\pi$ if and only if $\{g_i\ | \  i\in \varphi(F)\}$ is positively
spanning. \hfill$\Box$
\end{lemma}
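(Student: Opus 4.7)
The plan is to reformulate each of the three strict-preservation conditions in terms of the normal cone at $F$. Since $P$ is full-dimensional and simple, the normal cone $N_F$ at $F$ is simplicial: it equals the positive hull of the linearly independent vectors $\{f_i \mid i \in \varphi(F)\}$, and its relative interior consists exactly of the strict positive combinations of these generators. The key translation is that a functional $c \in \R^d$ lifted to $\tilde{c} \eqdef (c,0) \in \R^n$ satisfies $\tau(\tilde{c}) = 0$, and conversely any vector $v = \sum_i \lambda_i f_i \in N_F$ has the form $(c,0)$ exactly when $\tau(v) = \sum_i \lambda_i g_i = 0$. This is the bridge that converts a statement about supporting functionals of $\pi(P)$ into a statement about the $g_i$.

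For the forward direction, assume $F$ is strictly preserved. By (i), some $c \in \R^d$ attains its maximum on $\pi(P)$ precisely on $\pi(F)$; by (iii), the lift $\tilde c=(c,0)$ is then maximized on $P$ precisely on $F = \pi^{-1}(\pi(F)) \cap P$. Hence $\tilde{c}$ lies in the relative interior of $N_F$, so by the simplicity of $P$ we may write $\tilde c = \sum_{i \in \varphi(F)} \lambda_i f_i$ with all $\lambda_i > 0$. Applying $\tau$ yields a strict positive dependence $\sum_i \lambda_i g_i = 0$. From (ii), the map $\pi|_F : F \to \pi(F)$ is a bijection of equidimensional polytopes; since the affine direction of $F$ is $\{f_i : i\in\varphi(F)\}^\perp$, injectivity of $\pi|_F$ is equivalent to $\ker\pi \cap \{f_i\}^\perp = \{0\}$, which is equivalent to $\{g_i : i \in \varphi(F)\}$ linearly spanning $\R^{n-d}$. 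Combined with the strict positive dependence, this is precisely the condition that $\{g_i\}$ positively spans $\R^{n-d}$.

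For the converse, suppose $\{g_i : i \in \varphi(F)\}$ is positively spanning, so there is a relation $\sum_i \lambda_i g_i = 0$ with all $\lambda_i > 0$ and the $g_i$ span $\R^{n-d}$. Set $\tilde c \eqdef \sum_i \lambda_i f_i$; then $\tau(\tilde c) = 0$ gives $\tilde c = (c,0)$ for some $c \in \R^d$. By linear independence of the $f_i$ and positivity of the $\lambda_i$, $\tilde c$ lies in the relative interior of $N_F$, so its maximum on $P$ is attained exactly on $F$. This immediately delivers (i) and (iii): $c$ supports $\pi(P)$ at $\pi(F)$, and $\pi^{-1}(\pi(F)) \cap P = F$. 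For (ii), the spanning property of $\{g_i\}$ forces $\pi|_F$ to be injective as above; since $\pi$ is linear, $\pi|_F$ sends each face of $F$ to a face of $\pi(F)$ with matching preimage, yielding a face-lattice isomorphism.

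The main subtlety I expect is condition (ii): injectivity of the continuous map $\pi|_F$ only yields a homeomorphism of underlying sets, and one must separately invoke linearity to conclude that face lattices are preserved. A minor care point is also the interplay between the two halves of ``positively spanning''~---~the existence of a strict positive dependence, and the spanning of the ambient space $\R^{n-d}$~---~since the first comes from conditions (i) and (iii) while the second comes from (ii).
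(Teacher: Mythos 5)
The paper states this lemma without proof, simply citing \cite{az-dpmsp-99,z-ppp-04}, so there is no internal argument to compare against. Your proof is correct and is essentially the standard argument from the cited sources: use that for a simple polytope the normal cone $N_F$ is the simplicial cone $\mathrm{cone}\{f_i : i\in\varphi(F)\}$, observe that conditions (i) and (iii) together are equivalent to some $\tilde c=(c,0)\in\mathrm{relint}\,N_F$ (equivalently, a strictly positive dependence among the $g_i$), and that condition (ii) is equivalent to $\ker\pi\cap\{f_i\}^\perp=\{0\}$, which amounts to $\{g_i\}$ spanning $\R^{n-d}$ linearly; putting the strict positive dependence together with linear spanning is precisely positive spanning. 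The small subtleties you flag (reading (iii) as $\pi^{-1}(\pi(F))\cap P=F$, and invoking affinity of $\pi|_{\mathrm{aff}F}$ to promote a set bijection to a face-lattice isomorphism) are real and you handle them correctly.
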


\subsubsection{A first construction}

Let $t\in\{0,1,\dots,r\}$ be maximal such that the matrices
$A_1,\dots, A_t$ are entirely contained in the first $d$~columns
of~$A$. Let $\bm \eqdef \sum_{i=1}^t m_i$ and $\bn \eqdef \sum_{i=1}^t n_i$.  By
changing bases appropriately, we can assume that the bottom $n_i\times
n_i$ block of $A_i$ is the identity matrix for each~$i\ge t+1$.  In
order to simplify the exposition, we also assume first that $\bn=d$,
\ie that the projection on the first $d$ coordinates separates the
first $t$ block matrices from the last $r-t$. See
Figure~\ref{fig:defA}a.

Let $\{g_1,\dots, g_\bm\}\subset\R^{n-d}$ be a set of vectors such
that $G \eqdef \{e_1,\dots,e_{n-d}\}\cup\{g_1,\dots,g_\bm\}$~is the
Gale transform of a full-dimensional simplicial neighborly
polytope~$Q$~---~see \cite{z-lp-95,m-ldg-02} for definition and properties of Gale duality. By elementary properties of the Gale transform, $Q$~has
${\bm+n-d}$~vertices, and $\dim Q=(\bm+n-d)-(n-d)-1=\bm-1$.  In
particular, every subset of $\lfloor\frac{\bm-1}{2}\rfloor$~vertices
spans a face of~$Q$, so every subset of
$\bm+n-d-\lfloor\frac{\bm-1}{2}\rfloor \eqfed \alpha$ elements of $G$ is
positively spanning.

We deform the matrix $A$ into the matrix~$\defA$ of~Figure~\ref{fig:defA}a,
using the vectors~$g_1,\dots,g_\bm$ to deform the top $\bm$ rows. We denote by $\defP$ the corresponding deformed product. We say that a facet of
$\defP$ is ``good'' if the right part of the corresponding row of $\defA$ is covered by
a vector of $G$, and ``bad'' otherwise. Bad facets are hatched in
Figure~\ref{fig:defA}a.  Observe that there are $\beta \eqdef m-\bm-n+d$ bad
facets in total.

\begin{figure}
  \centerline{\includegraphics[scale=1]{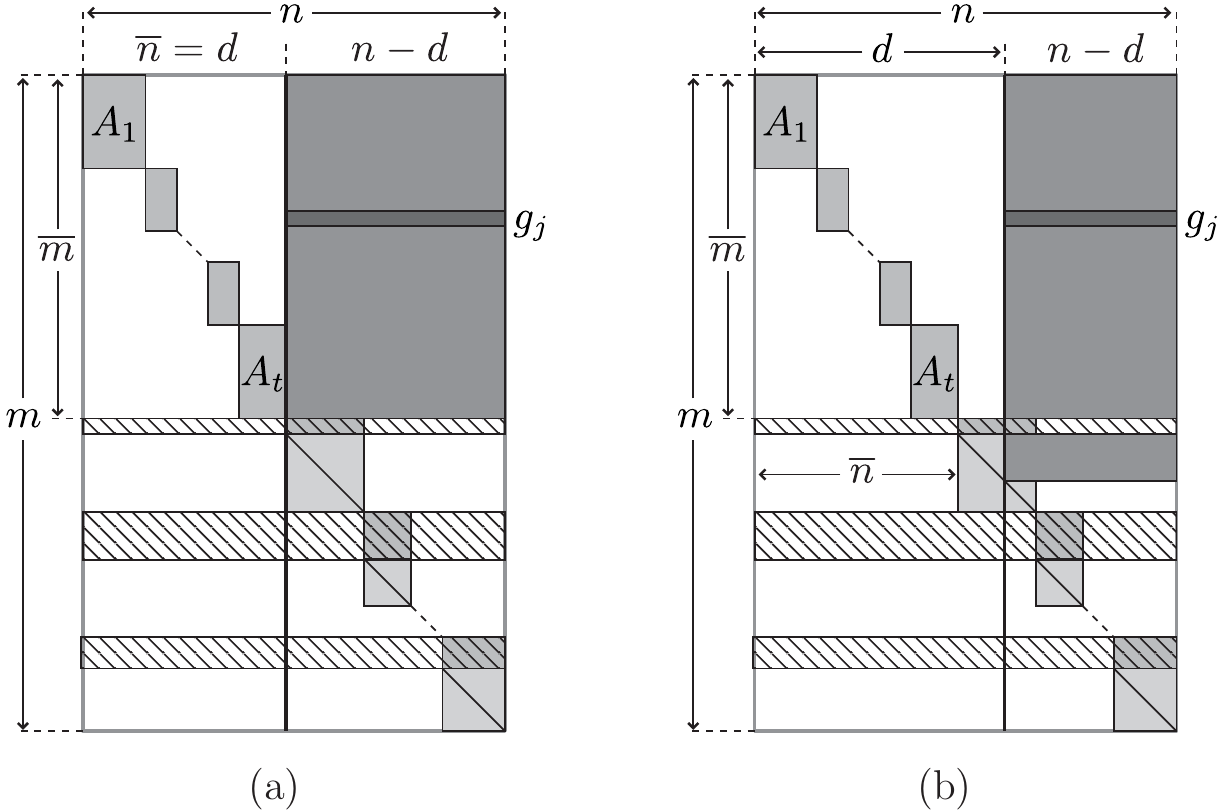}}
  \caption{The deformed matrix $\defA$ (a) when the projection does
    not slice any block ($\bn=d$), and (b) when the block $A_{t+1}$ is
    sliced ($\bn<d$).  Horizontal hatched boxes denote bad row
    vectors. The top right solid block is formed by the vectors
    $g_1,\dots,g_\bm$. }
  \label{fig:defA}
\end{figure}

Let $F$ be a $k$-face of~$\defP$. Since $\defP$ is a simple
$n$-dimensional polytope, $F$~is the intersection of $n-k$~facets,
among which at least $\gamma \eqdef n - k - \beta$ are good facets.  If the
corresponding elements of~$G$ are positively spanning, then $F$~is
strictly preserved under projection onto the first $d$~coordinates.
Since we have seen that any subset of $\alpha$~vectors of~$G$ is
positively spanning, $F$~will surely be preserved if $\alpha\le\gamma$,
which is equivalent to
\[
	k
	\ \le \
	n - m  + \fracfloor{\bm-1}{2}.
\]
Thus, under this assumption, we obtain a $d$-dimensional polytope
whose $k$-skeleton is combinatorially equivalent to that of
$P \eqdef P_1\times\dots\times P_r$.

\subsubsection{When the projection slices a block}

We now discuss the case when $\bn < d$, for which the method is very
similar.  We consider vectors $g_1,\dots,g_{\bm + d - \bn}$ such that
$G \eqdef \{e_1,\dots,e_{n-d}\}\cup\{g_1,\dots,g_{\bm + d -\bn}\}$ is the
Gale dual of a neighborly polytope. We deform the
matrix $A$ into the matrix $\defA$ shown in
Figure~\ref{fig:defA}b, using again the vectors $g_1,\dots,g_\bm$ to deform the
top $\bm$ rows and the vectors $g_{\bm+1}\dots,g_{\bm + d -\bn}$ to deform the top $d-\bn$ rows of the $n_{t+1}\times n_{t+1}$ bottom identity submatrix of
$A_{t+1}$. This is indeed a valid deformation since we
can prescribe the $n_{t+1}\times n_{t+1}$ bottom submatrix of
$A_{t+1}$ to be any upper triangular matrix, up to changing the basis
appropriately.  For the same reasons as before,
\begin{enumerate}
\item any subset of at least $\alpha \eqdef \bm + n - \bn - \fracfloor{\bm
    + d - \bn - 1}{2}$ elements of $G$ is positively spanning;
\item the number of bad facets is $\beta \eqdef m-\bm-n+\bn$, and thus any
  $k$-face of $\defP$ is contained in at least $\gamma \eqdef n-k-\beta$
  good facets.
\end{enumerate}
Thus, the condition $\alpha\le\gamma$ translates to
\[
	k
	\ \le \
	n - m  + \fracfloor{\bm+d-\bn-1}{2},
\]
and we obtain the following proposition.

\begin{proposition}\label{prop:defp1}
  Let $P_1,\dots,P_r$ be simple polytopes of respective dimension
  $n_i$, and with $m_i$ many facets. For a fixed integer $d\le
  \sum_{i=1}^r n_i$, let $t$~be maximal such that $\sum_{i=1}^t
  n_i\le d$. Then there exists a $d$-dimensional polytope whose
  $k$-skeleton is combinatorially equivalent to that of the product
  $P_1\times\dots\times P_r$, provided
\[
  0 \ \le \ k \ \le \ \sum_{i=1}^r (n_i-m_i) + \Lfloor\frac{1}{2}\left(d-1+\sum_{i=1}^t(m_i-n_i)\right)\Rfloor. \qed
\]
\end{proposition}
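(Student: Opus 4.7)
The plan is to formalize the construction of $\defP$ that is sketched informally in the preceding paragraphs, and then to verify via the Projection Lemma~\ref{lem:projection} that the projection onto the first $d$ coordinates preserves the entire $k$-skeleton of $\defP$. Since the shape of the deformed matrix $\defA$ depends on whether the projection cleanly separates the block-diagonal structure of $A$, the argument splits into the case $\bn = d$ and the case $\bn < d$, the former being essentially a degeneration of the latter.

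First I would fix notation: set $n \eqdef \sum_i n_i$, $m \eqdef \sum_i m_i$, $\bn \eqdef \sum_{i=1}^t n_i$, $\bm \eqdef \sum_{i=1}^t m_i$, and after independent changes of basis on each factor $P_i$ with $i \ge t+1$, assume that the bottom $n_i \times n_i$ submatrix of $A_i$ is the identity. By~\cite{az-dpmsp-99} we may then freely prescribe any values for the entries of $\defA$ lying above the block-diagonal of $A$ (and, when $\bn < d$, also within the $(d-\bn) \times n_{t+1}$ rectangle sitting above the identity subblock of $A_{t+1}$) and still find a right-hand side $\defb$ such that $\defP$ is combinatorially equivalent to $P$.

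Next I would choose vectors $g_1,\dots,g_{\bm+d-\bn}$ in $\R^{n-d}$ so that
\[
   G \ \eqdef \ \{e_1,\dots,e_{n-d}\} \cup \{g_1,\dots,g_{\bm+d-\bn}\}
\]
is the Gale transform of a $(\bm+d-\bn-1)$-dimensional simplicial neighborly polytope on $\bm+n-\bn$ vertices. Gale duality together with maximal neighborliness then guarantees that every subset of $\alpha \eqdef \bm + n - \bn - \Lfloor\frac{\bm+d-\bn-1}{2}\Rfloor$ elements of $G$ is positively spanning. After plugging these $g_i$ into the upper-right entries of $\defA$ as in Figure~\ref{fig:defA}, the facets of $\defP$ partition into \emph{good} ones (those whose normal $f_i$ satisfies $\tau(f_i) \in G$) and the remaining $\beta \eqdef m - \bm - n + \bn$ \emph{bad} ones.

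Finally I would invoke the Projection Lemma: any $k$-face $F$ of the simple polytope $\defP$ is the intersection of $n-k$ facets, of which at least $\gamma \eqdef n - k - \beta$ are good, so whenever $\gamma \ge \alpha$ the set $\{\tau(f_i) \ | \ i\in\varphi(F)\}$ already contains a positively spanning subset and $F$ is strictly preserved. Rewriting $\gamma \ge \alpha$ gives
\[
   k \ \le \ n - m + \Lfloor\frac{\bm + d - \bn - 1}{2}\Rfloor,
\]
which upon substituting the definitions of $n$, $m$, $\bn$, $\bm$ is exactly the bound in the statement. The main technical point to justify carefully is that placing $g_{\bm+1},\dots,g_{\bm+d-\bn}$ inside the sliced block $A_{t+1}$ really is a legal deformation in the sense of~\cite{az-dpmsp-99}: this follows because, after a basis change, the bottom $n_{t+1}\times n_{t+1}$ block of $A_{t+1}$ may be taken to be upper triangular, so the top $d-\bn$ rows of that subblock are indeed free to be perturbed without altering the combinatorial type of the factor $P_{t+1}$.
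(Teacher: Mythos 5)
Your proposal matches the paper's proof essentially step for step: the same block-diagonal setup with identity subblocks, the same choice of $G$ as the Gale transform of a $(\bm+d-\bn-1)$-dimensional neighborly simplicial polytope, the same constants $\alpha$, $\beta$, $\gamma$, the same appeal to the Projection Lemma, and the same algebra reducing $\gamma\ge\alpha$ to the stated bound on $k$. You also correctly flag the one technical point (that sliced blocks may be taken upper triangular so the deformation is legal), which the paper also notes explicitly.
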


In the next two paragraphs, we present two improvements on the bound of this proposition. Both use colorings of the graphs of the polar polytopes $P_i^\polar$, in order to weaken the condition $\alpha\le\gamma$, in two different directions:
\begin{enumerate}[(i)]
\item the first improvement decreases the number of required vectors
  in the Gale transform~$G$, which, in turn, decreases the value of
  $\alpha$;
\item the second one decreases the number of bad facets, and thus
  increases the value of $\gamma$.
\end{enumerate}

\subsubsection{Multiple vectors}

In order to raise our bound on $k$, we can save vectors of $G$ by
repeating some of them several times. Namely, any two facets that have
no $k$-face in common can share the same vector $g_j$.  Since any two
facets of a simple polytope containing a common $k$-face share a
ridge, this condition can be expressed in terms of incidences in the
graph of the polar polytope: facets not connected by an edge in this
graph can use the same vector~$g_j$. We denote the chromatic number of
a graph~$H$ by~$\chi(H)$.  Then, each $P_i$ with $i\le t$ only
contributes $\chi_i \eqdef \chi(\text{sk}_1 P_i^\polar)$ different vectors
in~$G$, instead of~$m_i$ of them. Thus, we only need in total
$\bchi \eqdef \sum_{i=1}^t \chi_i$ different vectors $g_j$. This improvement
replaces $\bm$ by $\bchi$ in the formula of $\alpha$, while
$\beta$~and~$\gamma$ do not change, and the condition
$\alpha\le\gamma$ is equivalent to
\[
	k
	\ \le \
	n - m + \bm - \bchi + \fracfloor{\bchi-d-\bn-1}{2}.
\]
Thus, we obtain the following improved proposition:

\begin{proposition}\label{prop:defp2}
  Let $P_1,\dots,P_r$ be simple polytopes of respective
  dimension~$n_i$, and with $m_i$~many
  facets. Let~$\chi_i \eqdef \chi(\text{sk}_1 P_i^\polar)$ denote the
  chromatic number of the graph of the polar polytope
  $P_i^\polar$. For a fixed integer $d\le\sum_{i=1}^r n_i$, let
  $t$~be maximal such that $\sum_{i=1}^t n_i\le d$. Then there exists
  a $d$-dimensional polytope whose $k$-skeleton is combinatorially
  equivalent to that of the product $P_1\times\dots\times P_r$,
  provided
\[
  0 \ \le \ k \ \le \ \sum_{i=1}^r (n_i-m_i) + \sum_{i=1}^t (m_i-\chi_i)+
  \Lfloor\frac12\left(d-1+\sum_{i=1}^t(\chi_i-n_i)\right)\Rfloor. \qed
\]
\end{proposition}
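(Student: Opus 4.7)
The plan is to mirror the proof of Proposition \ref{prop:defp1}, altering only the choice of the vectors that fill the deformable entries above the diagonal blocks of $\defA$ so as to economize on the number of distinct vectors used: two facets of the same factor $P_i$ that receive the same color in a proper $\chi_i$-coloring of $\text{sk}_1 P_i^\polar$ will be deformed using one and the same vector.

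Concretely, for each $i \in [t]$ I would fix a proper coloring of $\text{sk}_1 P_i^\polar$ with $\chi_i$ colors and use just $\chi_i$ distinct vectors of $G$ to deform the block associated with $A_i$, assigning identical vectors to rows whose facets share a color. The vectors used for different factors are drawn from disjoint palettes. When $\bn < d$, the top $d-\bn$ rows of the identity submatrix of $A_{t+1}$ still consume $d - \bn$ fresh additional vectors, as in the previous construction. Altogether the deformation requires only $\bchi + d - \bn$ vectors $g_j$, and I would take $G \eqdef \{e_1,\dots,e_{n-d}\} \cup \{g_1,\dots,g_{\bchi + d - \bn}\}$ to be the Gale transform of a simplicial neighborly polytope of dimension $\bchi + d - \bn - 1$. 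As in the earlier argument, any $\alpha \eqdef \bchi + n - \bn - \Lfloor (\bchi + d - \bn - 1)/2 \Rfloor$ distinct vectors of $G$ are then positively spanning.

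The step that genuinely needs new justification is that, for every $k$-face $F$ of $\defP$, the good facets containing $F$ still correspond to \emph{distinct} vectors of $G$. This is exactly what the coloring buys: since $\defP$ is simple, any two of its facets meeting along a face that contains $F$ in fact share a ridge, so if they come from the same factor $P_i$ the corresponding vertices of $P_i^\polar$ are adjacent and carry different colors, while facets from different factors are already assigned vectors from disjoint palettes. I expect this distinctness check to be the only conceptually new content; the rest is a parallel of the earlier argument.

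The remainder is bookkeeping. A $k$-face of the simple polytope $\defP$ is the intersection of $n-k$ facets; at most $\beta \eqdef m - \bm - n + \bn$ of them are bad, so at least $\gamma \eqdef n - k - \beta$ are good and yield $\gamma$ distinct vectors of $G$. If $\gamma \ge \alpha$, the Projection Lemma certifies $F$ as strictly preserved, and rearranging this inequality using $\sum_{i=1}^r (n_i-m_i) = n - m$, $\sum_{i=1}^t (m_i-\chi_i) = \bm - \bchi$, and $\sum_{i=1}^t (\chi_i - n_i) = \bchi - \bn$ yields exactly the bound stated in the proposition.
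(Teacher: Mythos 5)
Your proposal is correct and follows the paper's approach: reuse one $g_j$ per color class of a proper coloring of $\mathrm{sk}_1 P_i^\polar$ for each $i\le t$, replace $\bm$ by $\bchi$ in the count of distinct Gale vectors, and rerun the $\alpha\le\gamma$ computation. You correctly pin down the one point that genuinely needs an argument — that good facets through a common $k$-face receive distinct $g_j$'s, because two such facets share a ridge and hence, if they come from the same factor, correspond to adjacent vertices of $P_i^\polar$ and so carry different colors — which the paper asserts in a single sentence; and your $\alpha = \bchi + n - \bn - \lfloor(\bchi + d - \bn - 1)/2\rfloor$ is the correct value (the displayed formula in the paper's text just before the proposition has a sign typo, $\bchi - d - \bn - 1$ for $\bchi + d - \bn - 1$, which your derivation implicitly fixes).
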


\begin{example}\label{ex:even}
  Since polars of simple polytopes are simplicial, $\chi_i\ge n_i$ is
  an obvious lower bound for the chromatic number of the dual graph
  of~$P_i$. Polytopes that attain this lower bound with equality are
  characterized by the property that all their $2$-dimensional faces
  have an even number of vertices, and are called \defn{even}
  polytopes. 

  If all $P_i$ are even polytopes, then $\bn=\bchi$, and we obtain a $d$-dimensional polytope with the same $k$-skeleton as $P_1\times\dots\times P_r$ provided
  \[
     k
     \ \le \
     n - m + \bm - \bn + \fracfloor{d-1}{2}.
  \]
  In order to maximize $k$, we should maximize $\bm-\bn$, subject to
  the condition $\bn\le d$. For example, if all $n_i$ are equal, this
  amounts to ordering the $P_i$ by decreasing number of facets.
\end{example}

\subsubsection{Scaling blocks}

We can also apply colorings to the blocks $A_i$ with $i\ge t+1$, by
filling in the area below~$G$ and above the diagonal blocks. To
explain this, assume for the moment that $\chi_i\le n_{i+1}$ for a
certain fixed~$i\ge t+2$.  Assume that the rows of $A_i$ are colored
with $\chi_i$~colors using a valid coloring $c:[m_i]\to[\chi_i]$
of the graph of the polar polytope~$P_i^\polar$. Let $\Gamma$ be the
incidence matrix of $c$, defined by $\Gamma_{j,k}=1$
if~$c(j)=k$, and $\Gamma_{j,k}=0$ otherwise. Thus, $\Gamma$ is a
matrix of size~$m_i\times \chi_i$. We put this matrix to the right
of~$A_i$ and above~$A_{i+1}$ as in Figure~\ref{fig:chi}b, so that we
append the same unit vector to each row of~$A_i$ in the same color
class. Moreover, we scale all entries of the block~$A_i$ by a
sufficiently small constant $\varepsilon>0$.

\begin{figure}
  \centerline{\includegraphics[scale=1]{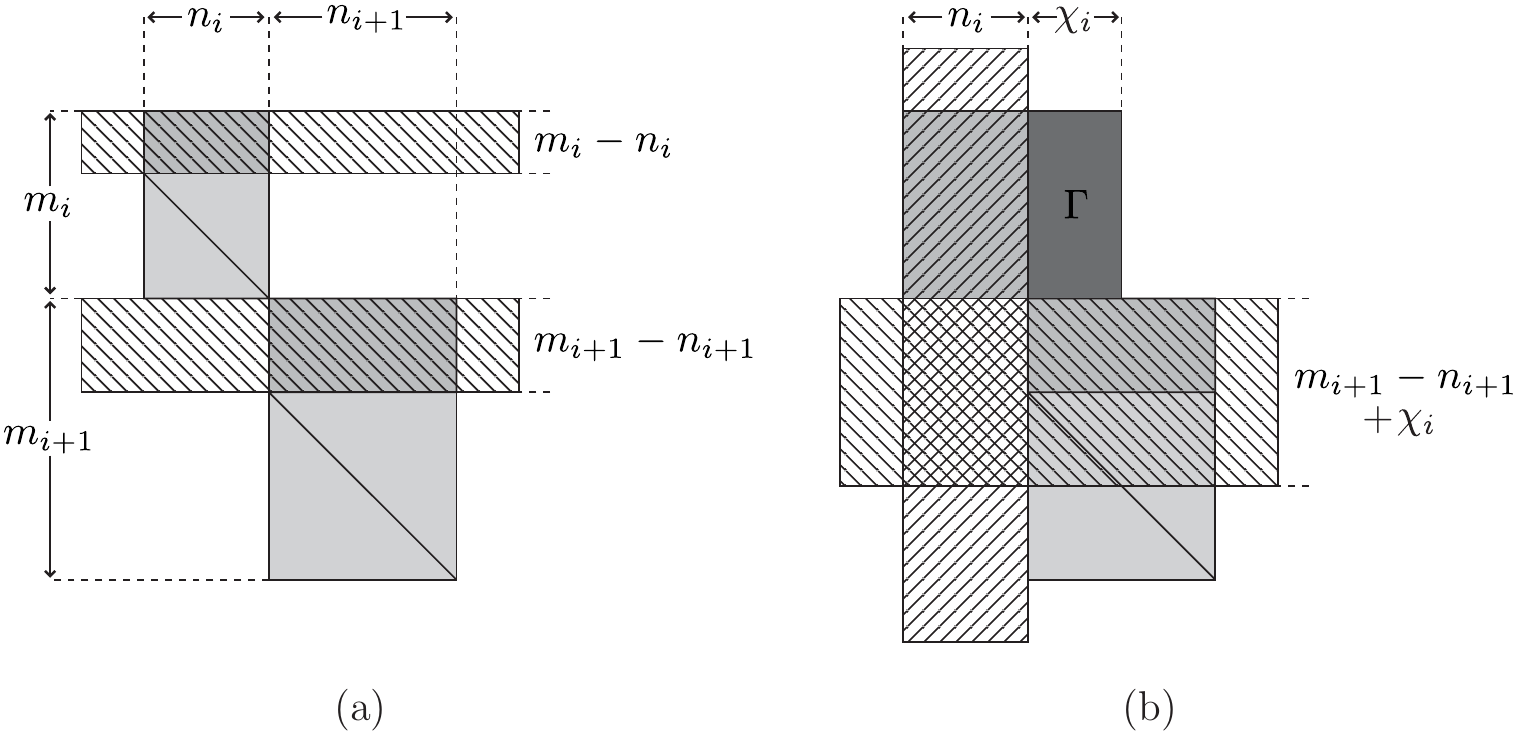}}
  \caption{How to raise the dimension of the preserved skeleton by
    inserting the incidence matrix~$\Gamma$ of a coloring of the graph
    of the polar polytope~$P_i^\polar$. Part~(a) shows the situation
    before the insertion of~$\Gamma$, and part~(b) the changes that
    have occurred.  Bad row vectors and unnecessary columns are 
    hatched. The entries in the matrix to the left of~$\Gamma$ must be
    rescaled to retain a valid inequality description of~$P$.}
  \label{fig:chi}
\end{figure}

In this setting, the situation is slightly different:
\begin{enumerate}
\item In the Gale dual $G$, we do not need the $n_i$ basis vectors
of $\R^{n-d}$ hatched in Figure~\ref{fig:chi}b. 
Let $a \eqdef \sum_{j<i} n_j$ denote the index of the last column vector
of $A_{i-1}$ and $b \eqdef 1+\sum_{j\le i} n_j$ denote the index
of the first column vector of $A_{i+1}$.
We define $G \eqdef \{e_1,\dots,e_{a-d},e_{b-d},\dots,e_{n-d}\}\cup
\{g_1,\dots,g_\bm\}$ to be the Gale transform of a simplicial
neighborly polytope~$Q$ of dimension $\bm-1-n_i$. As before, any
subset of $\alpha \eqdef \bm+n-\bn-n_i-\lfloor\frac{\bm+d-\bn-n_i-1}{2}\rfloor$ vectors of
$G$ positively spans $\R^{n-d}$.

\item ``Bad'' facets are defined as before, except that the top
  $m_i-n_i$ rows of~$A_i$ are not bad anymore, but all of the first
  $m_{i+1}-n_{i+1}+\chi_i$~rows of~$A_{i+1}$ are now bad.  Thus, the
  net change in the number of bad rows is $\chi_i-m_i+n_i$, so that
  any $k$-face is contained in at least
  $\gamma \eqdef 2n-k-m+\bm-\bn+m_i-n_i-\chi_i$ good rows. Up to
  $\varepsilon$-entry elements, the last $n-d$ coordinates of these
  rows correspond to pairwise distinct elements of~$G$.
\end{enumerate}

Applying the same reasoning as above, the $k$-skeleton of~$\defP$ is
strictly preserved under projection to the first~$d$ coordinates as
soon as $\alpha\le\gamma$, which is equivalent to
\[
	k
	\ \le \
	n - m + m_i -\chi_i + \fracfloor{\bm+d-\bn-n_i-1}{2}.
\]

Thus, we improve our bound on $k$ provided
\[
\Delta \eqdef m_i-\chi_i+\fracfloor{\bm+d-\bn-n_i-1}{2} -
\fracfloor{\bm+d-\bn-1}{2}>0.
\]
For example, this difference
$\Delta$ is big for polytopes whose polars have many vertices but
a small chromatic number.

\medskip

Finally, observe that one can apply this ``scaling''
improvement even if $\chi_i>n_{i+1}$ (except that it will perturb more
than the two blocks $A_i$ and $A_{i+1}$) and to more than one
matrix~$A_i$. Please see the example in
Figure~\ref{fig:matrixfinal}. In this picture, the $\Gamma$~blocks are
incidence matrices of colorings of the graphs of the polar
polytopes. Call ``diagonal entries'' all entries on the diagonal of
the $n_i \times n_i$ bottom submatrix of a factor $A_i$. A column is
unnecessary (hatched in the picture) if its diagonal entry has a
$\Gamma$ block on the right and no $\Gamma$ block above. Good rows are
those covered by a vector $g_j$ or a $\Gamma$ block, together with the basis
vectors whose diagonal entry has no $\Gamma$ block above (bad rows are
hatched in the picture).

\begin{figure}[htbp]
  \centerline{\includegraphics[scale=1]{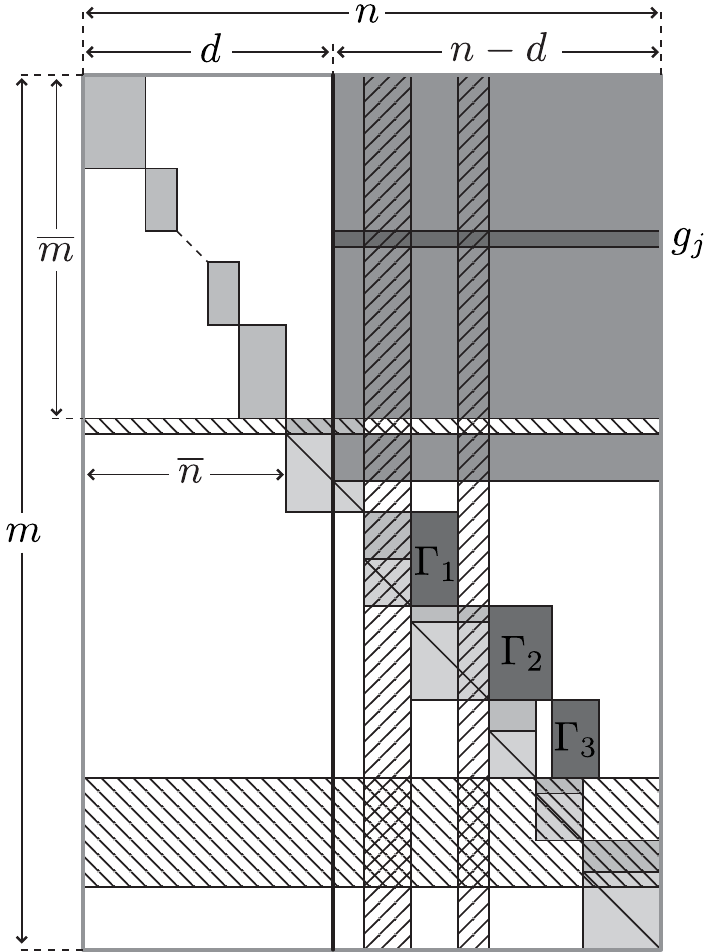}}
  \caption{How to reduce the number of vectors in the Gale transform
    using various coloring matrices of polar polytopes. Situations
    where $\chi_i > n_{i+1}$ can be accommodated for as illustrated by
    the matrix $\Gamma_2$ in the picture.}
  \label{fig:matrixfinal}
\end{figure}

\begin{example}
  \begin{enumerate}
  \item If $P_i$ is a segment, then $n_i=1$, $m_i=2$ and $\chi_i=1$,
    so that $\Delta=1$ if $\bm$~is even and $0$~otherwise. Iterating
    this, if $P_i$~is an $s$-dimensional cube, then
    $\Delta\simeq\frac{s}{2}$. This yields \defn{neighborly
      cubical polytopes}~---~see~\cite{jz-ncp-00, js-ncps-05}.
  \item If $P_i$ is an even cycle, then $n_i=2$, $m_i=2p$ and
    $\chi_i=2$, so that $\Delta=2p-3$. This yields \defn{projected
      products of polygons}~---~see \cite{z-ppp-04,sz-capdd}.
  \end{enumerate}
\end{example}

In general, it is difficult to give the explicit ordering of the
factors and choice of deformation that will yield the largest possible
value of~$k$ attainable by a concrete product $P_1\times\dots\times
P_r$ of simple polytopes, and consequently to summarize this improvement
by a precise proposition as we did for our first improvement.
However, this best value can clearly be
found by optimizing over the finite set of all possible orderings and
types of deformation. Furthermore, we can be much more explicit for
products of simplices, as we detail in the next section.

\subsection{Projection of deformed product of simplices}

We are now ready to apply this general construction to the particular
case of products of simplices. For this, we represent the simplex
$\simplex_{n_i}$ by the inequality system $A_i x \le b_i$, where
\[
   A_i 
   \ \eqdef \
   \begin{pmatrix}
     -1 & \dots & -1 \\
     1 \\
     & \ddots\\
     && 1
   \end{pmatrix}
\]
and $b_i$ is a suitable right-hand side. We express the results of the construction with a case distinction according to the number $s \eqdef |\{i\in[r]\ |\ n_i=1\}|$ of segments in the product $\simplex_{\sub{n}}$.

\begin{proposition}\label{prop:defp-ppsn}
  Let $\sub{n} \eqdef (n_1,\dots,n_r)$ with $1=n_1=\dots=n_s<n_{s+1}\le\dots\le n_r$. Then
  \begin{enumerate}
  \item for any $0\le d\le s-1$, there exists a $d$-dimensional $(k,\sub{n})$-PPSN polytope provided
  \[
  k\ \le \ \fracfloor{d}{2}-r+s-1.
  \]
  \item for any $s\le d\le n$, there exists a $d$-dimensional $(k,\sub{n})$-PPSN polytope provided
  \[
  k\ \le \ \fracfloor{d+t-s}{2}-r+s.
  \]
  where $t\in\{s,\dots,r\}$ denotes the maximal integer such that $\sum_{i=1}^{t} n_i\le d$.
  \end{enumerate}
\end{proposition}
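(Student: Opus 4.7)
The approach is to specialize the generalized deformed-product projection machinery of Section~\ref{sec:general} to $P_i = \simplex_{n_i}$. First I record the relevant parameters of each simplex factor: $m_i = n_i+1$, and the polar chromatic number $\chi_i$ equals $1$ when $n_i=1$ (a segment has no ridges, so its polar has no graph edges in this paper's convention) and $n_i+1 = m_i$ when $n_i \ge 2$ (the polar is again a simplex with complete dual graph). Chromatic savings therefore arise only on the $s$ segment factors, which dictates placing them in the leading positions of the product, with the remaining non-segment simplices ordered by increasing dimension.

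For a fixed target dimension $d$, let $t$ be maximal with $\sum_{i=1}^t n_i \le d$. Plugging the simplex parameters into Proposition~\ref{prop:defp2} yields a baseline bound. In case~(2), when $t \ge s$, all $s$ segments sit in the leading positions and the formula gives $k \le -r + s + \lfloor(d-1+t-s)/2\rfloor$; in case~(1), when $t \le d < s$, only $t$ segments fit and the formula gives $k \le -r + t + \lfloor(d-1)/2\rfloor$. To raise these bounds to match the statement of the proposition I apply the scaling-block improvement from the end of Section~\ref{sec:general} to selected segment factors: inserting the $2\times 1$ incidence matrix $\Gamma_i$ of the trivial coloring of a segment's polar graph above the following diagonal block (and scaling the underlying $A_i$ entries by a sufficiently small $\varepsilon > 0$) gains $\Delta \in \{0,1\}$ depending on the parity of $\bm + d - \bn$ at that moment, exactly as in the neighborly cubical polytope construction recalled just after Proposition~\ref{prop:defp2}. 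A single well-chosen insertion supplies the missing half-unit in the floor function for case~(2), while iterating the insertion across multiple segments produces the larger alternating-parity gains needed in case~(1).

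The main technical obstacle is the bookkeeping: tracking $\bm$, $\bn$, the set of ``good'' rows, and the running parity across successive scalings, and verifying at each step that the Gale dual $G$ and the positive-spanning condition of the Projection Lemma~\ref{lem:projection} remain satisfied after the deformation of $A$ into $\defA$. No fundamentally new construction beyond Section~\ref{sec:general} is required; the substance of the proof is the choice of factor order, the insertion points for the coloring matrices $\Gamma_i$, and the parity analysis that produces the stated floor-function bound in each of the two cases.
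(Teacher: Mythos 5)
Your general framework is right: specialize the deformed-product machinery of Section~\ref{sec:general} to simplices, record $m_i = n_i+1$, $\chi_i = n_i+1$ for $n_i \ge 2$, and $\chi_i = 1$ for segments, and put the segment factors first. (Your justification that a segment's polar graph has no edges is slightly off --- a $1$-polytope does have a ridge, namely the empty face, and its polar graph is a single edge; the correct reason $\chi_i=1$ suffices here is that in a \emph{product} the two facets of a segment factor have empty intersection, hence share no ridge of the product.) But both of your improvement steps contain genuine gaps.

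For case~(1), the iterated scaling picture you describe --- apply the scaling paragraph's $\Delta$-formula segment by segment, gaining $0$ or $1$ at each step depending on parity --- yields a total gain of roughly $(s-d)/2$ over the Proposition~\ref{prop:defp2} baseline $k \le -r+d+\lfloor (d-1)/2\rfloor$. The target $k \le \lfloor d/2\rfloor - r + s - 1$ requires a gain of $s-d$ (for $d$ even) or $s-d-1$ (for $d$ odd), so the alternating-parity accounting falls short by about half. The missing idea is that Figure~\ref{fig:defp-ppsn1} \emph{chains} the scalings of $A_{d+1},\dots,A_s$ so that each $\Gamma_i$ sits above the next scaled block and also makes the rows it creates good again. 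The bad rows do not accumulate one per scaling as your $\Delta$-bookkeeping assumes; instead the whole chain of $s-d$ scaled segments costs only one extra bad row at the very end, giving $\beta = r - s + 1$ rather than $\beta = r-d$. The bulk of the improvement is this reduction of $\beta$ (contributing $s-d-1$ to the bound), with the Gale-dimension drop contributing only the parity correction $\lfloor d/2\rfloor - \lfloor(d-1)/2\rfloor \in \{0,1\}$. The single-scaling $\Delta$-formula you invoke does not predict this, because it is derived under the assumption that scaling $A_i$ makes the first $\chi_i$ diagonal rows of $A_{i+1}$ bad permanently; when $A_{i+1}$ is itself scaled, those rows become good again.

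For case~(2), your plan is to apply a scaling insertion ``to selected segment factors'' to remove the residual $-1$ inside the floor. But when $d \ge s$, the hypothesis $t \ge s$ means all $s$ segments already lie among the first $t$ blocks and are handled by the multiple-vectors coloring; there is no segment block $A_i$ with $i \ge t+1$ left to scale, so the scaling improvement simply does not apply. The paper's argument is of a different nature: it appends one more Gale vector $g_\star$ (Figure~\ref{fig:defp-ppsn2}b), chosen to have strictly negative entries by requiring the vertices of $Q$ corresponding to $g_1,\dots,g_{d+t}$ to lie on a common facet, and uses it to fill in the slot of the $-1$'s row of $A_{t+1}$ so that that row ceases to be bad. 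This turns one bad facet into a good one while changing $\alpha$ by at most one, and the parity works out to absorb the floor's $-1$. That construction and its validity argument are not recoverable from Proposition~\ref{prop:defp2} plus scaling, so your proposal is missing the key step here as well.
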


\begin{figure}[tbp]
  \centerline{\includegraphics[scale=1]{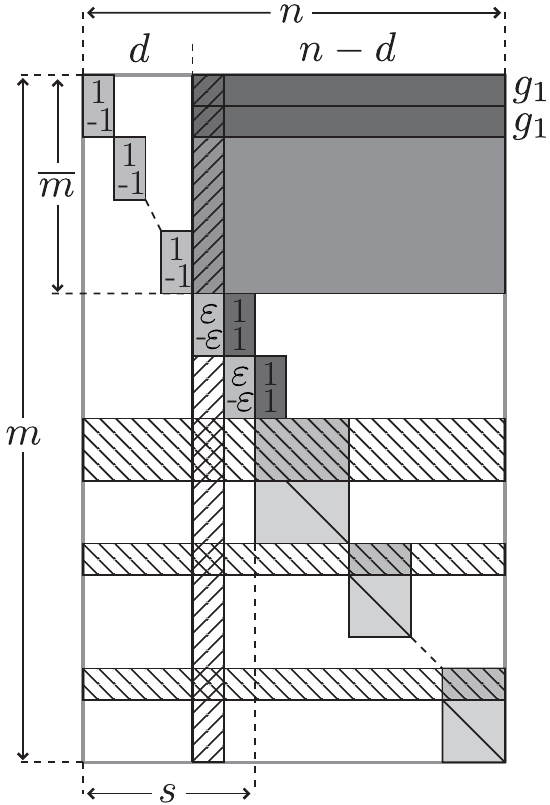}}
  \caption{How to obtain PPSN polytopes from a deformed product
    construction, when the number~$s$ of segment factors exceeds the
    target dimension~$d$ of the projection.}
  \label{fig:defp-ppsn1}
\end{figure}

\begin{proof}[Proof of (1)]
  This is a special case of the results obtainable with the methods of
  Section~\ref{sec:general}. The best construction is obtained using
  the matrix  in Figure~\ref{fig:defp-ppsn1}, from which we
  read off that 
  \begin{enumerate}
  \item any subset of at least $\alpha \eqdef n-\fracfloor{d}{2}$
    vectors in~$G$ is positively spanning; and
  \item the number of bad facets is $\beta \eqdef r-s+1$, and therefore any
    $k$-face of $\defP$ is contained in at least $\gamma \eqdef n-k-r+s-1$
    good facets.
  \end{enumerate}
  From this, the claim follows.
\end{proof}

\begin{proof}[Proof of (2)]
  Consider the deformed product of Figure~\ref{fig:defp-ppsn2}a.
  Using similar calculations as before, we deduce that
  \begin{enumerate}
  \item any subset of at least
    $\alpha \eqdef t-s+n-\fracfloor{d+t-s-1}{2}$ vectors in~$G$ is
    positively spanning; and
  \item the number of bad facets is $\beta \eqdef r-t$, and therefore any
    $k$-face of $\defP$ is contained in at least $\gamma \eqdef n-k-r+t$ good
    facets.
  \end{enumerate}
  This yields a bound of
    \[
  k\ \le \ \fracfloor{d+t-s-1}{2}-r+s.
  \]

  We optimize the final `$-1$' away by suitably deforming the matrix
  $A_{t+1}$ as in Figure~\ref{fig:defp-ppsn2}b. This amounts to adding
  one more vector $g_\star$ to the Gale diagram, so that the first row
  of~$A_{t+1}$ ceases to be a bad facet. This deformation is valid because:
  \begin{enumerate}

  \item the matrix 
    \[
    \begin{pmatrix}
      -1 & \dots & -1 & \star & \dots &  \star\\
      M \\
      & \ddots\\
      && M\\
      &&& 1\\
      &&&& \ddots\\
      &&&&& 1
    \end{pmatrix}
    \]
    still defines a simplex, as long as the `$\star$' entries are
    negative and $M\gg0$ is chosen to be sufficiently large;
 
  \item we can in fact choose the new vector $g_\star$ to have only
    negative entries, by imposing an additional restriction on the
    Gale diagram $G=\{e_1,\dots, e_{n-d}$, $g_1, \dots, g_{d+t},
    g_\star\}$ of~$Q$. Namely, we require that the vertices of the
    $(d+t)$-dimensional simplicial polytope~$Q$ that correspond to the
    Gale vectors $g_1,\dots,g_{d+t}$ lie on a facet. This forces
    the remaining vectors $e_1,\dots,e_{n-d},g_\star$ to be positively
    spanning, so that $g_\star$ has only negative entries. \qedhere
  \end{enumerate}
\end{proof}

\begin{figure}[tbp]
  \centerline{\includegraphics[scale=1]{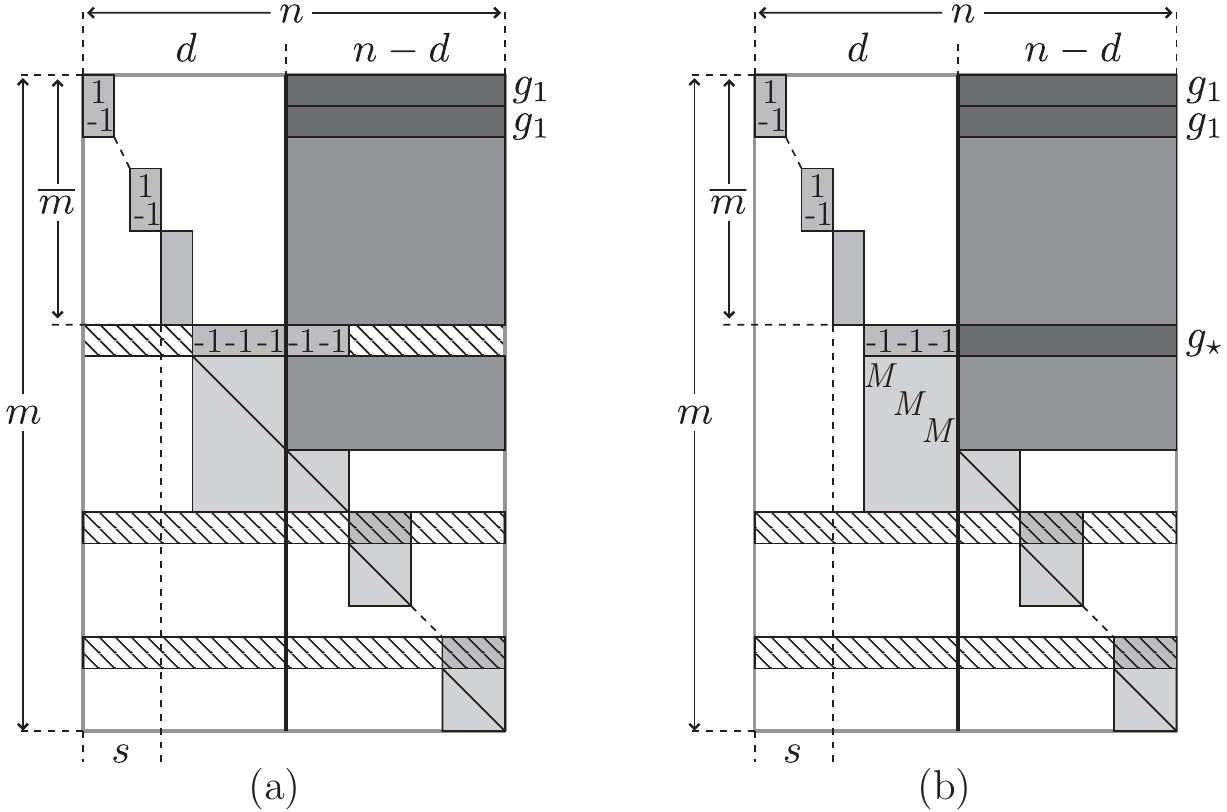}}
  \caption{Obtaining PPSN polytopes from a deformed product
    construction, when few of the factors are segments. Part (a) shows
    the technique used so far, and part (b) an additional optimization
    that exchanges a bad facet for a new vector in the Gale
    transform.}
  \label{fig:defp-ppsn2}
\end{figure}

Finally, we reformulate Proposition~\ref{prop:defp-ppsn} to express, in terms of $k$ and $\sub{n} \eqdef (n_1,\dots,n_r)$, what dimensions a $(k,\sub{n})$-PPSN polytope can have. This yields upper bounds on $\delta_{pr}(k,\sub{n})$.

\begin{theorem}\label{theo:defp-ppsn}
For any $k\ge0$ and $\sub{n} \eqdef (n_1,\dots,n_r)$ with ${1=n_1=\dots=n_s<n_{s+1}\le\dots\le n_r}$,
\[
   \delta_{pr}(k,\sub{n})
   \ \le \
   \begin{cases}
     2(k+r)-s-t   & \text{if } 3s \le 2k+2r, \\
     2(k+r-s)+1   & \text{if } 3s = 2k+2r+1, \\
     2(k+r-s+1)   & \text{if } 3s \ge 2k+2r+2,
   \end{cases}
\]
where $t\in\{s,\dots,r\}$ is maximal such that 
\[
3s+\sum_{i=s+1}^{t}(n_i+1)\ \le \ 2k+2r.
\]
\end{theorem}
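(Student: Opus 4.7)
The plan is to derive the theorem by inverting the inequalities of Proposition~\ref{prop:defp-ppsn}: for a fixed tuple $\sub{n}$ and a fixed~$k$, I will find the smallest target dimension~$d$ for which one of the two constructions in the proposition applies, and show that it agrees with the case analysis given in the statement.

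First I will work with part~(2) of Proposition~\ref{prop:defp-ppsn}. The inequality $k\le\lfloor(d+t-s)/2\rfloor-r+s$ is equivalent to $d\ge 2k+2r-s-t$, so substituting the tentative choice $d\eqdef 2k+2r-s-t$, I need to verify that this $d$ is consistent with the definition of $t$, namely $\sum_{i=1}^{t}n_i\le d<\sum_{i=1}^{t+1}n_i$. Using $n_1=\dots=n_s=1$ and the algebraic identity $\sum_{i=s+1}^t(n_i+1)=(t-s)+\sum_{i=s+1}^t n_i$, the lower bound $\sum_{i=1}^t n_i\le d$ rewrites exactly as $3s+\sum_{i=s+1}^{t}(n_i+1)\le 2k+2r$, which is the defining condition for $t$ in the statement of the theorem. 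Thus whenever $t\ge s$ is a valid choice, that is when $3s\le 2k+2r$, part~(2) yields the first bound $\delta_{pr}(k,\sub{n})\le 2(k+r)-s-t$.

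Next I handle the boundary case $3s=2k+2r+1$. Here the theorem's maximal~$t$ would have to be strictly less than~$s$, so I instead apply part~(2) with the minimal admissible choice $d=s$ and $t=s$: the hypothesis of part~(2) becomes $k\le \lfloor s/2\rfloor-r+s$, and since $3s=2k+2r+1$ forces $s$ to be odd, this reads $k\le(s-1)/2+s-r=k$, which holds with equality. Hence $d=s=2(k+r-s)+1$ is achievable, giving the middle case.

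Finally, for $3s\ge 2k+2r+2$ I apply part~(1) of Proposition~\ref{prop:defp-ppsn}. The inequality $k\le\lfloor d/2\rfloor-r+s-1$ rearranges to $d\ge 2(k+r-s+1)$, so I set $d\eqdef 2(k+r-s+1)$; I then check $d\le s-1$ using $3s\ge 2k+2r+2$ (which gives $d=2k+2r-2s+2\le s$, with the tiny boundary $d=s$ also handled directly by part~(2) with $t=s$ as in the previous paragraph). This yields the third case $\delta_{pr}(k,\sub{n})\le 2(k+r-s+1)$.

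The whole argument is essentially a careful rearrangement plus a case split at the threshold $3s$ vs.\ $2k+2r$; the main source of friction is keeping the floor functions and the boundary cases $3s\in\{2k+2r,2k+2r+1,2k+2r+2\}$ straight, where one must check which of parts~(1) and~(2) of Proposition~\ref{prop:defp-ppsn} is the applicable one and that the resulting $d$ really matches the piecewise formula stated in the theorem.
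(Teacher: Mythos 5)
Your proof is correct and takes essentially the same route as the paper's: the paper's entire argument is the single sentence ``Apply part (1) of Proposition~\ref{prop:defp-ppsn} when $3s\ge2k+2r+2$ and part (2) otherwise,'' and your write-up is the detailed version of exactly that strategy, namely inverting the inequalities $k\le\fracfloor{d}{2}-r+s-1$ and $k\le\fracfloor{d+t-s}{2}-r+s$ to solve for the smallest admissible $d$ and checking the defining constraint on $t$. Your verification in the first case that $\sum_{i=1}^t n_i\le d$ is literally equivalent to $3s+\sum_{i=s+1}^t(n_i+1)\le 2k+2r$ is the key algebraic identity, and it is right.

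One thing worth noting: you actually catch a boundary case that the paper's one-liner glosses over. When $3s=2k+2r+2$, setting $d=2(k+r-s+1)$ gives $d=s$, which is outside the range $0\le d\le s-1$ required by part~(1), so the paper's instruction to ``apply part~(1) when $3s\ge 2k+2r+2$'' does not literally work at that boundary. Your observation that $d=s$ is nevertheless attainable via part~(2) with $t=s$ (here $s$ is even, so $\fracfloor{s}{2}-r+s=\tfrac{3s}{2}-r=k+1\ge k$, and $s=2(k+r-s+1)$) closes that gap cleanly. Minor nit: you say this boundary is handled ``as in the previous paragraph,'' but the parity of $s$ differs between $3s=2k+2r+1$ (odd $s$) and $3s=2k+2r+2$ (even $s$), so the arithmetic is slightly different even though the conclusion is the same; worth spelling out if you polish this.
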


\begin{proof}
Apply part (1) of Proposition~\ref{prop:defp-ppsn} when $3s\ge2k+2r+2$ and part (2) otherwise.
\end{proof}

\begin{remark}
When all the $n_i$'s are large compared to $k$, the dimension of the $(k,\sub{n})$-PPSN polytope provided by this theorem is bigger than the dimension $2k+r+1$ of the $(k,\sub{n})$-PPSN polytope obtained by the Minkowski sum of cyclic polytopes of Theorem~\ref{theo:UBminkowskiCyclic}. However, if we have many segments (neighborly cubical polytopes), or more generally if many $n_i$'s are small compared to $k$, this construction provides our best examples of PPSN polytopes.
\end{remark}

\section{Topological Obstructions}
\label{sec:topologicalObstruction}

In this section, we give lower bounds on the minimal dimension
$\delta_{pr}(k,\sub{n})$ of a $(k,\sub{n})$-PPSN polytope, applying
and extending a method developed by Sanyal~\cite{s-tovnms-09}
to bound the number of vertices of Minkowski sums of polytopes.
This method provides
lower bounds on the target dimension of any linear projection that
preserves a given set of faces of a polytope.  It uses Gale duality to
associate a certain simplicial complex $\cK$ to the set of faces that
are preserved under the projection. Then lower bounds on the
embeddability dimension of~$\cK$ transfer to lower bounds on the
target dimension of the projection. In turn, the embeddability
dimension is bounded via colorings of the Kneser graph of the system
of minimal non-faces of~$\cK$, using Sarkaria's Embeddability
Theorem. 

For the convenience of the reader, we first quickly recall this embeddability criterion.
We then provide a brief overview of Sanyal's
method before applying it to obtain lower bounds on the dimension of
$(k,\sub{n})$-PPSN polytopes.
As mentioned in the introduction, these bounds match the upper bounds obtained from our different constructions for a wide range of parameters, and thus give the exact value of the minimal dimension of a PPSN polytope.

\subsection{Sarkaria's embeddability criterion}

\subsubsection{Kneser graphs}
\label{subsubsec:kneser}

Recall that a \defn{$k$-coloring} of a graph $G=(V,E)$ is a map
$c:V\to[k]$ such that $c(u)\ne c(v)$ for $(u,v)\in E$. As usual, let
$\chi(G)$~denote the \defn{chromatic number} of $G$ (\ie the
minimal $k$ such that $G$ admits a $k$-coloring). We are interested in
the chromatic number of so-called Kneser graphs.

Let $\cZ$ be a subset of the power set $2^{[n]}$ of $[n]$. The
\defn{Kneser graph} on $\cZ$, denoted $\KG(\cZ)$, is
the graph with vertex set $\cZ$, where 
$X,Y\in\cZ$ are adjacent if and only if $X\cap Y=\emptyset$:
\[
   \KG(\cZ)
   \ \eqdef \
   \left(\cZ,\{(X,Y)\in\cZ^2\ | \  X\cap
     Y=\emptyset\}\right).
\]
Let $\KG_{n}^{k} \eqdef \KG\big({[n] \choose k}\big)$ denote the Kneser graph
on the set of subsets of $[n]$ of size~$k$.  For example, the graph
$\KG_{n}^{1}$ is the complete graph $K_n$ (of chromatic number $n$)
and the graph $\KG_{5}^{2}$ is the Petersen graph (of chromatic number
$3$).

\begin{remark}
\begin{enumerate}
\item If $n\le 2k-1$, then any two $k$-subsets of $[n]$ intersect and
  the Kneser graph $\KG_{n}^{k}$ is independent ({\it i.e.}, it has no
  edge). Thus its chromatic number is $\chi(\KG_{n}^{k})=1$.
\item If $n\ge 2k-1$, then $\chi(\KG_{n}^{k})\le n-2k+2$. Indeed, the
  map $c:{[n] \choose k}\to[n-2k+2]$ defined by
  $c(F) \eqdef \min(F\cup\{n-2k+2\})$ is a $(n-2k+2)$-coloring of
  $\KG_{n}^{k}$.
\end{enumerate}
\end{remark}

In fact, it turns out that this upper bound is the exact chromatic
number of the Kneser graph: $\chi(\KG_{n}^{k})=\max\{1,n-2k+2\}$. This
result was conjectured by Kneser~\cite{Kneser55} in 1955, and proved by
Lov\'asz~\cite{l-kccnh-78} in 1978 applying the Borsuk-Ulam
Theorem~---~see~\cite{m-ubut-03} for more details. However, we will
only need the upper bound for the topological obstruction.

\subsubsection{Sarkaria's Theorem}
\label{subsubsec:sarkaria}

Our lower bounds on the dimension of $(k,\sub{n})$-PPSN polytopes rely
on lower bounds for the dimension in which certain simplicial
complexes can be embedded. Among other possible
methods~\cite{m-ubut-03}, we use Sarkaria's Coloring and Embedding
Theorem.

We associate to any simplicial complex $\cK$ the set system
$\cZ$ of \defn{minimal non-faces} of $\cK$, that is,
the inclusion-minimal sets of
$2^{V(\cK)}\smallsetminus\cK$. For example, the
complex of minimal non-faces of the $k$-skeleton of the
$n$-dimensional simplex is ${[n+1] \choose k+2}$. Sarkaria's Theorem
provides a lower bound on the dimension into which $\cK$ can
be embedded, in terms of the chromatic number of the Kneser graph of
$\cZ$.

\begin{theorem}[Sarkaria's Theorem]
  \label{theo:sarkaria}
  Let $\cK$ be a simplicial complex embeddable in~$\R^d$,
  $\cZ$ be the system of minimal non-faces of $\cK$,
  and $\KG(\cZ)$ be the Kneser graph on $\cZ$. Then
  \[
    d
    \ \ge \
    |V(\cK)|-\chi(\KG(\cZ))-1.
  \]
\end{theorem}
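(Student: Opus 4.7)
The plan is to combine the Borsuk--Ulam theorem with the classical deleted-join embedding obstruction. Set $n := |V(\cK)|$ and $c := \chi(\KG(\cZ))$; assuming an embedding $\cK \hookrightarrow \R^{d}$, the goal is to conclude $d \ge n - c - 1$. The first ingredient is the standard Van Kampen--Flores style construction: extending the embedding affinely to the join $\cK * \cK$, subtracting the two images, and projecting radially produces a $\mathbb{Z}/2$-equivariant continuous map
\[
\Phi \colon \cK^{*2}_{\Delta} \longrightarrow S^{d},
\]
where $\cK^{*2}_{\Delta} \subseteq \cK * \cK$ is the 2-fold deleted join (the subcomplex of pairs $A * B$ with $A \cap B = \emptyset$), $\mathbb{Z}/2$ swaps the two copies on the source, and it acts antipodally on the target.

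The core of the proof runs in the opposite direction, using the coloring. Fix a proper coloring $\gamma \colon \cZ \to [c]$ realizing the chromatic number, with color classes $\cZ_{1}, \dots, \cZ_{c}$; each class is a pairwise-intersecting family of minimal non-faces. From $\gamma$ I would build a $\mathbb{Z}/2$-equivariant map
\[
\Psi \colon S^{n-c-1} \longrightarrow \cK^{*2}_{\Delta}
\]
by exhibiting $S^{n-c-1}$ as an equivariant PL subsphere, of codimension $c$, of the ambient deleted join $(2^{V(\cK)})^{*2}_{\Delta} \cong S^{n-1}$, cut out by $c$ ``color-cutting'' identifications extracted from $\gamma$. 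The pairwise intersection within each color class is precisely what keeps this subsphere inside $\cK^{*2}_{\Delta}$: no disjoint pair $(A,B)$ in the image of $\Psi$ can contain a minimal non-face entirely in $A$ or entirely in $B$, since two monochromatic non-faces separated in this way would be forced by the color-$j$ identification to share a common vertex, contradicting $A \cap B = \emptyset$.

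Finally, the composition $\Phi \circ \Psi \colon S^{n-c-1} \to S^{d}$ is $\mathbb{Z}/2$-equivariant, so the Borsuk--Ulam theorem forces $n - c - 1 \le d$, which is the desired inequality. The main obstacle is the middle step: making the ``color-cutting'' identifications precise and verifying that they carve out a subsphere of exactly dimension $n - c - 1$ that lies entirely within $\cK^{*2}_{\Delta}$. Equivalently, one must show that the $\mathbb{Z}/2$-index of the complement of $\cK^{*2}_{\Delta}$ inside $(2^{V(\cK)})^{*2}_{\Delta}$ is at most $c-1$, derived solely from the pairwise-intersection property of the monochromatic classes. This combinatorial-to-topological translation, converting ``chromatic number of a Kneser graph'' into ``codimension of an equivariant subsphere'', is the technical crux of Sarkaria's argument.
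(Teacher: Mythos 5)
The paper does not prove Sarkaria's Theorem: it states it, refers to Matou\v{s}ek's book~\cite{m-ubut-03} for the proof, and moves on, so there is no internal argument to compare your sketch against.

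Your opening step is correct: extending an embedding $\|\cK\|\hookrightarrow\R^d$ to the join produces a $\mathbb{Z}/2$-map $\Phi\colon\|\cK^{*2}_\Delta\|\to S^d$, hence $\mathrm{ind}_{\mathbb{Z}/2}(\cK^{*2}_\Delta)\le d$. The genuine gap is in the middle step, and it is not merely a detail to be filled in. Writing $n=|V(\cK)|$ and $c=\chi(\KG(\cZ))$, your plan needs a $\mathbb{Z}/2$-map $\Psi\colon S^{n-c-1}\to\|\cK^{*2}_\Delta\|$, i.e.\ the \emph{coindex} bound $\mathrm{coind}_{\mathbb{Z}/2}(\cK^{*2}_\Delta)\ge n-c-1$. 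What the coloring argument actually delivers is the \emph{index} bound $\mathrm{ind}_{\mathbb{Z}/2}(\cK^{*2}_\Delta)\ge n-c-1$, and this is strictly weaker: $\mathrm{coind}\le\mathrm{ind}$ always holds, and the two need not coincide. So ``exhibiting $S^{n-c-1}$ as an equivariant PL subsphere inside $\cK^{*2}_\Delta$'' is an over-claim, and the sentence beginning ``Equivalently, one must show that the $\mathbb{Z}/2$-index of the complement $\ldots$ is at most $c-1$'' is not an equivalent reformulation of that subsphere statement~---~it is the right ingredient for a different chain of reasoning.

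The link you are missing is the subadditivity of the $\mathbb{Z}/2$-index. Once $\mathrm{ind}(Y)\le c-1$ is established for the complement $Y\eqdef(\Delta^{n-1})^{*2}_\Delta\smallsetminus\|\cK^{*2}_\Delta\|$ (using, as you indicate, that the color classes of $\gamma$ are intersecting families, so each point of $Y$ receives a well-defined pair of \emph{disjoint} color-sets from the two halves of its supporting face, giving a $\mathbb{Z}/2$-map $Y\to(\Delta^{c-1})^{*2}_\Delta\cong S^{c-1}$), one applies $\mathrm{ind}(A\cup B)\le\mathrm{ind}(A)+\mathrm{ind}(B)+1$ to the cover $S^{n-1}\cong(\Delta^{n-1})^{*2}_\Delta=\|\cK^{*2}_\Delta\|\cup Y$, obtaining $n-1\le\mathrm{ind}(\cK^{*2}_\Delta)+(c-1)+1$, hence $\mathrm{ind}(\cK^{*2}_\Delta)\ge n-c-1$, and finally $d\ge n-c-1$ via $\Phi$. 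It is this index-subadditivity lemma~---~not a Borsuk--Ulam argument applied to a composition $\Phi\circ\Psi$~---~that closes Sarkaria's argument, and your sketch never invokes it.
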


In other words, we get large lower bounds on the possible embedding
dimension of $\cK$ when the Kneser graph of minimal non-faces of~$\cK$
has small chromatic number.  We refer to the excellent treatment
in~\cite{m-ubut-03} for further details.

\subsection{Sanyal's topological obstruction method}

For given integers $n>d$, we consider the orthogonal projection
$\pi:\R^n\to\R^d$ to the first $d$ coordinates, and its dual
projection $\tau:\R^n\to\R^{n-d}$ to the last $n-d$ coordinates.  Let
$P$ be a full-dimensional simple polytope in~$\R^n$, with $0$ in its
interior, and assume that its vertices are strictly preserved under
$\pi$.  Let~$F_1,\dots,F_m$ denote the facets of $P$. For all
$i\in[m]$, let $f_i$ denote the normal vector to $F_i$, and let
$g_i \eqdef \tau(f_i)$. For any face $F$ of $P$, let $\varphi(F)$ denote the
set of indices of the facets of~$P$ containing $F$, \ie such that
$F=\bigcap_{i\in \varphi(F)} F_i$.

\begin{lemma}[Sanyal \cite{s-tovnms-09}]\label{lem:gale_transform}
  The vector configuration $G \eqdef \{g_i\ | \ i\in[m]\}\subset\R^{n-d}$
  is the Gale transform of the vertex set $\{a_i\ | \ i\in[m]\}$
  of a (full-dimensional) polytope $Q$ of~$\R^{m-n+d-1}$. Up to a
  slight perturbation of the facets of~$P$, we can even assume~$Q$ to
  be simplicial.
\end{lemma}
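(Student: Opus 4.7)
My plan is to verify directly that the vector configuration $G \eqdef \{g_i\ |\ i\in[m]\}\subset\R^{n-d}$ satisfies three conditions that together identify it (up to the usual Gale-duality equivalence) as the affine Gale transform of the vertex set of a full-dimensional polytope $Q\subset\R^{m-n+d-1}$: (i)~$G$ linearly spans $\R^{n-d}$; (ii)~$G$ admits a strictly positive linear dependence $\sum_i\mu_ig_i=0$ with all $\mu_i>0$; and (iii)~for every $i\in[m]$, the subconfiguration $\{g_j\ |\ j\ne i\}$ also admits a strictly positive linear dependence. Conditions (i) and (ii) together will ensure that $G$ is the Gale diagram of some point configuration $\{a_i\}\subset\R^{m-n+d-1}$ affinely spanning $\R^{m-n+d-1}$, and (iii) is the Gale-dual incarnation of the statement that $a_i$ is a vertex of $Q\eqdef\conv\{a_1,\dots,a_m\}$ for every~$i$.

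Conditions (i) and (ii) will follow straightforwardly from the geometry of~$P$. Since $P$ is full-dimensional with~$0$ in its interior, the facet normals $\{f_i\}$ linearly span $\R^n$ (otherwise $P$ would contain a line), and surjectivity of~$\tau$ forces $\{g_i\}=\{\tau(f_i)\}$ to span~$\R^{n-d}$. After normalizing the $f_i$ so that $P=\{x\in\R^n\ |\ \langle f_i,x\rangle\le 1\text{ for all }i\}$, the hypothesis that~$0$ lies in the interior of~$P$ is equivalent to~$0$ lying in the interior of $\conv\{f_i\}$, which yields (by a Farkas-type argument) a strictly positive combination $\sum_i\mu_if_i=0$; applying~$\tau$ then gives condition~(ii).

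The main obstacle is condition (iii), and this is where the hypothesis that every vertex of~$P$ is strictly preserved under~$\pi$ enters crucially. By the Projection Lemma, each vertex $v=\bigcap_{j\in I(v)}F_j$ of the simple polytope~$P$ (with $|I(v)|=n$) gives rise to a positively spanning set $\{g_j\ |\ j\in I(v)\}$, hence to a strictly positive dependence $\sum_{j\in I(v)}\lambda^{(v)}_jg_j=0$ with $\lambda^{(v)}_j>0$ for $j\in I(v)$. Fix $i\in[m]$. For every $j\in[m]\smallsetminus\{i\}$, the facet~$F_j$ cannot be contained in~$F_i$ (distinct facets of a polytope are never nested), so $F_j$ carries at least one vertex $v_j$ with $j\in I(v_j)$ and $i\notin I(v_j)$. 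Summing the dependences $\lambda^{(v_j)}$, extended by zeros, over all $j\in[m]\smallsetminus\{i\}$ produces a single dependence on $\{g_k\ |\ k\ne i\}$: the coefficient on each $g_k$ with $k\ne i$ is strictly positive, since $v_k$ alone already contributes $\lambda^{(v_k)}_k>0$, while $g_i$ receives coefficient zero since $i\notin I(v_j)$ for any~$j$ in the sum. This verifies~(iii), and hence that each $a_i$ is a vertex of~$Q$.

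For the simpliciality assertion, a sufficiently small generic perturbation of the defining inequalities of~$P$ preserves strict preservation of vertices under~$\pi$, linear spanning of~$\{g_i\}$, and the existence of all the positive dependences used above (these are all open conditions), while putting the points~$a_i$ in general position; the resulting polytope~$Q$ is then full-dimensional and simplicial with $m$ distinct vertices.
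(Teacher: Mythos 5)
The paper cites this lemma from Sanyal~\cite{s-tovnms-09} and does not reprove it, so there is no in-paper argument to compare against. Your proof is correct and is the natural one. Conditions (i) and (ii) exhibit~$G$ (after rescaling each $g_i$ by the positive weight~$\mu_i$ so that the scaled vectors sum to zero --- the standard Gale-diagram equivalence, which you correctly flag) as the Gale transform of $m$ points affinely spanning $\R^{m-n+d-1}$, and condition (iii) is precisely the Gale-dual criterion for each $a_i$ to be a vertex, namely $0\in\mathrm{relint}\,\conv\{g_j : j\neq i\}$. The crucial step, and the one that genuinely uses the hypothesis that \emph{all} vertices of~$P$ are strictly preserved, is your derivation of~(iii): the Projection Lemma hands you a strictly positive dependence supported on $\varphi(v)$ for each vertex~$v$, and since each $F_j$ with $j\neq i$ carries a vertex outside~$F_i$, aggregating these dependences covers every index $k\neq i$ while keeping the coefficient of $g_i$ at zero. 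One minor imprecision in~(ii): ``$0$ interior to~$P$'' is what enables the normalization $\langle f_i,x\rangle\le 1$, whereas ``$0$ interior to $\conv\{f_i\}$'' after that normalization is equivalent to~$P$ being \emph{bounded}; you phrase these as equivalent, but both hold here so the conclusion is unaffected. The closing perturbation argument is the standard genericity argument and is fine.
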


We will refer to the polytope $Q$ as \defn{Sanyal's projection
  polytope}.  The faces of this polytope capture the key notion of
strictly preserved faces of $P$~---~remember
Definition~\ref{def:spf}. Indeed, the Projection
Lemma~\ref{lem:projection} ensures that for any face~$F$ of~$P$ that is
strictly preserved by the projection~$\pi$, the set $\{g_i\ | \ i\in
\varphi(F)\}$ is positively spanning. By Gale duality, this implies 
that the set of vertices $\{a_i\ | \ i\in[m]\smallsetminus
\varphi(F)\}$ forms a face of~$Q$.

\begin{example}
  \label{ex:entireQ}
  Let $P$ be a triangular prism in $3$-space that projects to a
  hexagon as in Figure~\ref{fig:entireQ}a, so that $n=3$, $d=2$ and
  $m=5$. The vector configuration $G\subset\R^1$ obtained by
  projecting $P$'s normal vectors consists of three vectors pointing
  up and two pointing down, so that Sanyal's projection polytope~$Q$
  is a bipyramid over a triangle. An edge $F_i\cap F_j$ of
  $P$ that is preserved under projection corresponds to the face
  $[5]\smallsetminus\{i,j\}$ of~$Q$. Notice that the six faces of $Q$
  corresponding to the six edges of~$P$ that are preserved under
  projection (in bold in Figure~\ref{fig:entireQ}a) make up the entire
  boundary complex of the bipyramid~$Q$.
\end{example}

\begin{figure}
   \centerline{\includegraphics[scale=1]{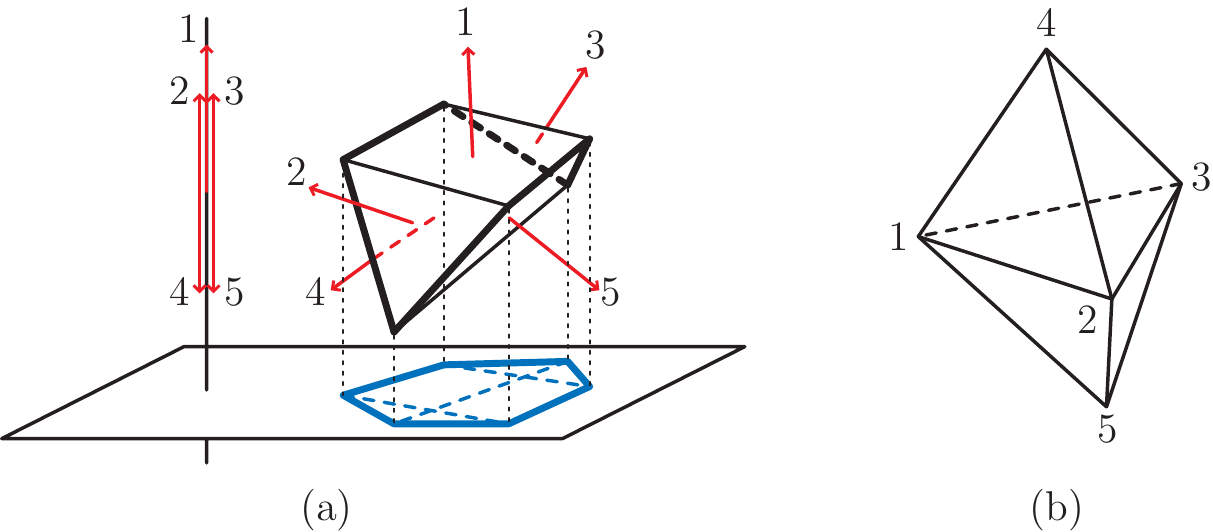}}
   \caption{(a) Projection of a triangular prism and (b) its associated
   projection polytope~$Q$.  The six faces of $Q$ corresponding to the
   six edges of~$P$ preserved under projection (bold) make
   up the entire boundary complex of~$Q$. }
   \label{fig:entireQ}
\end{figure}

Let $\cF$ be a collection of faces of $P$ that are strictly preserved
under~$\pi$.  Define $\cK$ to be the simplicial complex induced by
$\{[m]\smallsetminus \varphi(F)\ | \ F\in \cF\}$.

\begin{remark}
  Notice that not all non-empty faces of~$\cK$ correspond to non-empty
  faces in~$\cF$: in Example~\ref{ex:entireQ}, if $\cF$ consists of all strictly preserved edges, then $\cK$ is the entire
  boundary complex of Sanyal's projection polytope~$Q$, so that it contains the
  edge $\{2,3\}$. But then the complementary intersection of facets,
  $F_1\cap F_4\cap F_5$, does not correspond to any non-empty face
  of~$P$.
\end{remark}

Since the set of vertices $\{a_i\ | \ i\in[m]\smallsetminus
\varphi(F)\}$ forms a face of~$Q$ for any face $F\in\cF$, and since $Q$~is
simplicial, $\cK$~is a subcomplex of the face complex
of~$Q\subset\R^{m-n+d-1}$.  In particular, when $\cK$~is not the
entire boundary complex of~$Q$, it embeds into $\R^{m-n+d-2}$ by
stereographic projection (otherwise, it only embeds into
$\R^{m-n+d-1}$, as happens in Example~\ref{ex:entireQ}).

Thus, given the simple polytope $P\subset\R^n$ and a set $\cF$ of
faces of $P$ that we want to preserve under projection, the study of
the embeddability of the corresponding abstract simplicial complex
$\cK$ provides lower bounds on the dimension $d$ in which we can
project $P$. We proceed in the following way:
\begin{enumerate}
\item we first choose our subset $\cF$ of strictly preserved faces to
  be simple enough to understand and large enough to provide an
  obstruction;
\item we then understand the system $\cZ$ of minimal non-faces
  of the simplicial complex $\cK$;
\item finally, we find a suitable coloring of the Kneser graph on
  $\cZ$ and apply Sarkaria's 
  Theorem~\ref{theo:sarkaria} to bound the dimension in which $\cK$~can
  be embedded: a $t$-coloring of $\KG(\cZ)$ ensures that $\cK$ is not
  embeddable into $|V(\cK)|-t-2=m-t-2$, which by the previous
  paragraph bounds the
  dimension~$d$ from below as follows:
\end{enumerate}

\begin{theorem}[Sanyal \cite{s-tovnms-09}]
  \label{theo:sanyal}
  Let $P$ be a simple polytope in $\R^n$ whose facets are in general
  position, and let $\pi:\R^n\to\R^d$ be a projection. Let
  $\cF$ be a subset of the set of all strictly preserved faces of~$P$
  under~$\pi$, let $\cK$ be the simplicial complex induced by
  $\{[m]\smallsetminus\varphi(F)\ | \ F\in \cF\}$, and let $\cZ$~be
  its system of minimal non-faces. If the Kneser graph~$\KG(\cZ)$ is
  $t$-colorable, then
  \begin{enumerate}
  \item if $\cK$ is not the entire boundary complex of the Sanyal
    polytope~$Q$, then $d\ge n-t+1$;
  \item otherwise, $d\ge n-t$.\hfill$\Box$
  \end{enumerate}
\end{theorem}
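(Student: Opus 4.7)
The plan is to realize $\cK$ as a subcomplex of the boundary of Sanyal's projection polytope $Q$, embed $\cK$ into Euclidean space of controlled dimension, and then apply Sarkaria's Theorem~\ref{theo:sarkaria} to the system $\cZ$ of minimal non-faces.

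First I would use Lemma~\ref{lem:gale_transform} to pass to the simplicial polytope $Q\subset\R^{m-n+d-1}$ whose vertex set $\{a_1,\dots,a_m\}$ is Gale-dual to the configuration $\{g_1,\dots,g_m\}$. For every $F\in\cF$, the Projection Lemma~\ref{lem:projection} guarantees that $\{g_i\ | \ i\in\varphi(F)\}$ is positively spanning, so by the standard Gale duality dictionary the complementary index set $[m]\smallsetminus\varphi(F)$ indexes the vertex set of a face of $Q$. Since $\cK$ is by definition generated by these complementary sets, we conclude that $\cK$ is a subcomplex of the boundary complex $\partial Q$.

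Next I would split into the two cases of the statement. In case (1), where $\cK$ is a proper subcomplex of $\partial Q$, I pick a face $\sigma$ of $\partial Q$ that does not lie in $\cK$ and perform stereographic projection from a point in the relative interior of $\sigma$. Since $\partial Q$ is a piecewise linear $(m-n+d-2)$-sphere, this yields a PL embedding of $\cK$ into $\R^{m-n+d-2}$. In case (2), where $\cK=\partial Q$, the complex $\cK$ embeds into $\R^{m-n+d-1}$ as the boundary of the full-dimensional polytope $Q$.

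Finally, assuming (as is implicit in the setup) that $|V(\cK)|=m$, I apply Sarkaria's Theorem~\ref{theo:sarkaria}: any $t$-coloring of $\KG(\cZ)$ forces the embedding dimension to satisfy $\dim(\text{ambient})\ge m-t-1$. Combining this with the embedding dimensions obtained above yields $m-n+d-2\ge m-t-1$ in case (1) and $m-n+d-1\ge m-t-1$ in case (2), that is, $d\ge n-t+1$ and $d\ge n-t$ respectively. The main subtlety I expect is justifying the stereographic projection step cleanly: one must check that projecting from a point in the interior of a single missing face of $\partial Q$ suffices to embed all of $\cK$ into one dimension lower, and that the resulting map is a topological embedding so that Sarkaria's obstruction (which is a purely topological statement about continuous embeddings into $\R^d$) applies directly.
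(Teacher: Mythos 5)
Your proposal is correct and follows the same route the paper sketches in the discussion preceding the theorem: realize $\cK$ as a subcomplex of the boundary of Sanyal's simplicial projection polytope $Q\subset\R^{m-n+d-1}$ via Lemma~\ref{lem:gale_transform} and the Projection Lemma, embed it one dimension lower by stereographic projection from the relative interior of a missing face when $\cK\subsetneq\partial Q$, and then invoke Sarkaria's Theorem~\ref{theo:sarkaria} with $|V(\cK)|=m$ to obtain $m-n+d-2\ge m-t-1$ (resp.\ $m-n+d-1\ge m-t-1$), giving $d\ge n-t+1$ (resp.\ $d\ge n-t$). Your remark that Sarkaria applies to topological embeddings, so that the PL nature of the stereographic projection is not even needed, is exactly the right observation.
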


In the remainder of this section, we apply Sanyal's topological
obstruction to our problem. The hope was initially to extend it to
bound the target dimension of a projection preserving the $k$-skeleton
of an arbitrary product of simple polytopes. However, the
combinatorics involved to deal with this general question turn out to
be too complicated, and so we restrict our attention to products of
simplices. This yields bounds on the minimal dimension
$\delta_{pr}(k,\sub{n})$ of a $(k,\sub{n})$-PPSN polytope.

\subsection{Preserving the $k$-skeleton of a product of simplices}

In this section, we understand the abstract simplicial complex
$\cK$ corresponding to our problem, and describe its system
of minimal non-faces.

The facets of $\simplex_{\sub{n}}$ are exactly the products
\[
   \psi_{i,j} 
   \ \eqdef \
   \simplex_{n_1}\times\dots\times\simplex_{n_{i-1}}\times
   (\simplex_{n_i}\smallsetminus\{j\})\times\simplex_{n_{i+1}}
   \times\dots\times\simplex_{n_r},  
\]
for $i\in[r]$ and $j\in[n_i+1]$.  We identify the facet $\psi_{i,j}$
with the element $j\in[n_i+1]$ of the disjoint union
$[n_1+1]\uplus[n_2+1]\uplus\dots\uplus[n_r+1]$.

Let $F \eqdef F_1\times\dots\times F_r$ be a $k$-face of
$\simplex_{\sub{n}}$. Then $F$ is contained in a facet $\psi_{i,j}$ of
$\simplex_{\sub{n}}$ if and only if $j\notin F_i$. Thus, the set of
facets of $\simplex_{\sub{n}}$ that do not contain $F$ is exactly
${F_1\uplus\dots\uplus F_r}$. Consequently, if we
want to preserve the $k$-skeleton of $\simplex_{\sub{n}}$, then the
abstract simplicial complex~$\cK$ we are interested in is induced by
\begin{equation}
  \label{eq:complexK}
   \bigg\{F_1\uplus\dots\uplus F_r
   \ \big|\  
   \emptyset\ne F_i\subset [n_i+1] \text{ for all } i\in[r],
   \textrm{ and } \sum_{i\in[r]} (|F_i|-1)=k\bigg\}.
\end{equation}
 
\begin{remark}
  \label{rem:entireQ}
  In contrast to the general case, when we want to preserve the
  \defn{complete} \mbox{$k$-skeleton} of a product of simplices, the
  complex $\cK$ cannot be the entire boundary complex of the Sanyal
  polytope~$Q$.  As a consequence, the better lower bound from part (1)
  of Sanyal's Theorem~\ref{theo:sanyal} always holds, and we 
  always use it from now on without further notice.

  To prove that $\cK$ cannot cover the entire boundary complex of~$Q$,
  observe that
  \[
    \dim Q
    \ = \
    m-n+d-1
    \ = \
    \sum (n_i+1) - \sum n_i + d - 1
    \ = \
    r+d-1,
  \]
  while $\dim\cK=r+k-1$ by~\eqref{eq:complexK}. A necessary condition
  for $\cK$ to be the entire boundary complex of $Q$ is that
  $\dim\cK=\dim Q-1$, which translates to $d=k+1$. Now suppose that
  the entire $k$-skeleton of $\simplex_{\sub{n}}$ is preserved under
  projection to dimension $k+1$. Then the projections of those
  $k$-faces are facets of $\pi(\simplex_{\sub{n}})$. Since any ridge
  of the projected polytope is contained in exactly two facets, and
  the \defn{entire} $k$-skeleton of $\simplex_{\sub{n}}$ is preserved,
  we know that any $(k-1)$-face of $\simplex_{\sub{n}}$ is also contained
  in exactly two $k$-faces.  But this can only happen if $k=n-1$,
  which means $n=d$.

  Recall from Example~\ref{ex:entireQ} that $\cK$ can be the entire
  boundary complex of~$Q$ if we do not preserve \defn{all} $k$-faces
  of~$\simplex_{\sub{n}}$.
\end{remark}

The following lemma gives a description of the minimal non-faces
of~$\cK$:

\begin{lemma}
  The system of minimal non-faces of $\cK$ is
  \[
      \cZ
      \ \eqdef \ 
      \bigg\{G_1\uplus\dots\uplus G_r\ \big|\  
        |G_i|\ne 1 \text{ for all }i\in[r], \textrm{ and } \sum_{i\,|\, 
        G_i\ne\emptyset} (|G_i|-1)=k+1\bigg\}. 
  \]
\end{lemma}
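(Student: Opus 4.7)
The plan is to first characterize the faces of $\cK$ by an inequality on the ``shape'' of $H$, and then deduce the minimal non-faces by checking how the left-hand side of the inequality changes under deletions.

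\medskip

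\noindent\textbf{Step 1: Characterizing faces of $\cK$.} Let $H=H_1\uplus\dots\uplus H_r$ with $H_i\subset[n_i+1]$, and set $S(H)\eqdef\{i\in[r]\mid H_i\ne\emptyset\}$ and
\[
\sigma(H)\ \eqdef\ \sum_{i\in S(H)}(|H_i|-1).
\]
By the description of $\cK$ in~\eqref{eq:complexK}, we have $H\in\cK$ if and only if there exist non-empty $F_i\subset[n_i+1]$ with $F_i\supset H_i$ for all $i$ and $\sum_{i\in[r]}(|F_i|-1)=k$. For such a choice, the minimum value of $\sum(|F_i|-1)$ is $\sigma(H)$ (take $F_i=H_i$ on $S(H)$ and $|F_i|=1$ off $S(H)$), and the maximum is $\sum n_i = n$. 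Moreover, each unit increment is achievable by enlarging some $F_i$ by one element, so every integer in $[\sigma(H),n]$ is realized. Since $0\le k\le n$, this gives the clean equivalence
\[
H\in\cK\ \Longleftrightarrow\ \sigma(H)\le k, \qquad\text{equivalently}\qquad H\notin\cK\ \Longleftrightarrow\ \sigma(H)\ge k+1.
\]

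\medskip

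\noindent\textbf{Step 2: Singletons prevent minimality.} Suppose $H\notin\cK$ and some block $H_i$ is a singleton $\{x\}$. Deleting $x$ replaces $H_i$ by $\emptyset$, removing $i$ from $S(H)$ but subtracting only $|H_i|-1=0$ from $\sigma$. Hence $\sigma(H\smallsetminus\{x\})=\sigma(H)\ge k+1$, so $H\smallsetminus\{x\}$ is still a non-face, and $H$ is not minimal. Therefore in any minimal non-face every block satisfies $|H_i|\ne1$.

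\medskip

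\noindent\textbf{Step 3: Minimality when no block is a singleton.} Assume now that $|H_i|\ne1$ for all $i$, so each non-empty $H_i$ has at least $2$ elements. Deleting any element $x\in H_i$ leaves $H_i\smallsetminus\{x\}$ non-empty, so $S$ does not shrink, while $|H_i|-1$ drops by exactly one. Thus $\sigma(H\smallsetminus\{x\})=\sigma(H)-1$. Consequently $H$ is a minimal non-face if and only if $\sigma(H)\ge k+1$ and $\sigma(H)-1\le k$, which forces $\sigma(H)=k+1$.

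\medskip

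Combining Steps~2 and~3 yields exactly the claimed description of $\cZ$. The only delicate point in this plan is the ``every integer is realized'' claim in Step~1, but this is immediate from the fact that $|F_i|$ can be grown one element at a time up to $n_i+1$; there is no serious obstacle elsewhere.
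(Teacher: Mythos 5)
Your proof is correct and takes essentially the same route as the paper's: both first establish that $H\in\cK$ if and only if $\sigma(H)\leq k$, and then analyze how deletions interact with singleton blocks versus larger blocks. Your Step~1 is a bit more careful than the paper's in making the achievability of all intermediate values explicit, and your Step~3 is slightly slicker in observing that, once all blocks are non-singletons, every deletion decreases $\sigma$ by exactly one; but these are only stylistic refinements of the same argument.
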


\begin{proof}
  A subset $G \eqdef G_1\uplus\dots\uplus G_r$ of
  $[n_1+1]\uplus[n_2+1]\uplus\dots\uplus[n_r+1]$ is a face of $\cK$
  when it can be extended to a subset $F_1\uplus\dots\uplus F_r$
  with $\sum (|F_i|-1)=k$ and $\emptyset\ne F_i\subset [n_i+1]$ for
  all $i\in[r]$, that is, when
  \[
    k 
    \ \ge \
    \left|\big\{i\in[r]\ |\  G_i=\emptyset\big\}\right|
    +\sum_{i\in[r]} (|G_i|-1) =
    \sum_{i\,|\,  G_i\ne\emptyset} (|G_i|-1).
  \]
  Thus, $G$ is a non-face if and only if 
  \[
    \sum_{i\, | \,  G_i\ne\emptyset} (|G_i|-1)
    \ \ge \ k+1.
  \]

  If $\sum_{i\, | \,  G_i\ne\emptyset} (|G_i|-1)> k+1$, then removing any
  element provides a smaller non-face. If there is an $i$ such that
  $|G_i|=1$, then removing the unique element of $G_i$ provides a
  smaller non-face. Thus, if $G$ is a minimal non-face, then
  $\sum_{i\, | \,  G_i\ne\emptyset} (|G_i|-1)=k+1$, and $|G_i|\ne1$ for
  all $i\in[r]$.

  Reciprocally, if $G$ is a non-minimal non-face, then it is possible
  to remove one element keeping a non-face. Let $i\in[r]$ be such that
  we can remove one element from $G_i$, keeping a non-face. Then,
  either $|G_i|=1$, or 
  \[
     \sum_{j\, | \,  G_j\ne\emptyset} (|G_j|-1)
     \ \ge \
     1+(|G_i|-2)+\sum_{j\ne i\, | \,  G_j\ne\emptyset} (|G_j|-1)\ge k+2,
  \]
  since we keep a non-face.
\end{proof}

\subsection{Colorings of $\KG(\cZ)$}

Our next goal is to provide a suitable coloring of the Kneser graph on
the system~$\cZ$ of minimal non-faces of~$\cK$.  Let $S \eqdef \{i\in[r]\ |\
n_i=1\}$ denote the set of indices corresponding to the segments, and
$R \eqdef \{i\in[r]\ |\ n_i\ge2\}$ the set of indices corresponding to the
non-segments in the product $\simplex_{\sub{n}}$.  We first provide a
coloring for two extremal situations.

\begin{theorem}[Topological obstruction for low-dimensional skeleta]
\label{theo:topObstr-small-k}
  If $k\le \sum_{i\in R} \fracfloor{n_i-2}{2}$, then the
  dimension of any $(k,\sub{n})$-PPSN polytope cannot be smaller than
  $2k+|R|+1$: 
  \[
    \delta_{pr}(k,\sub{n})
    \ \ge \
    2k+|R|+1.
  \]
\end{theorem}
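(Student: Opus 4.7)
The plan is to apply Sanyal's topological obstruction to the preimage~$\tilde P$ (combinatorially equivalent to $\simplex_{\sub{n}}$) of a $(k,\sub{n})$-PPSN polytope in~$\R^d$ together with the projection $\pi:\R^n\to\R^d$, where $n = \sum_i n_i$. By Remark~\ref{rem:entireQ}, the simplicial complex $\cK$ associated with preserving the entire $k$-skeleton is not the full boundary of Sanyal's polytope, so part~(1) of Theorem~\ref{theo:sanyal} applies: any proper $t$-coloring of $\KG(\cZ)$ forces $d \ge n - t + 1$. Writing $n = |S| + \sum_{i \in R} n_i$, it therefore suffices to construct a proper coloring of $\KG(\cZ)$ with
\[
t \ \eqdef \ |S| + \sum_{i \in R}(n_i - 1) - 2k
\]
colors.

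To build it, I would first use the hypothesis $k \le \sum_{i \in R}\fracfloor{n_i-2}{2}$ to choose nonnegative integers $\{k_i\}_{i \in R}$ with $k_i \le \fracfloor{n_i-2}{2}$ and $\sum_{i \in R} k_i = k$; then each $n_i - 2k_i - 1 \ge 1$, and the palette
\[
C \ \eqdef \ S \ \uplus \ \biguplus_{i \in R} [n_i - 2k_i - 1]
\]
has cardinality exactly~$t$. For $G = G_1 \uplus \dots \uplus G_r \in \cZ$, I color it as follows: if some segment factor is active, meaning $G_i \ne \emptyset$ for some $i \in S$, let $c(G)$ be the smallest such $i \in S$; otherwise all weight of~$G$ lies in the factors indexed by~$R$, so $\sum_{i \in R}(|G_i| - 1) = k + 1 > k = \sum_{i \in R} k_i$, and pigeonhole yields an index $i \in R$ with $|G_i| - 1 > k_i$. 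Let $i^*$ be the smallest such index, and set $c(G) \eqdef (i^*,\,\min\{\min G_{i^*},\, n_{i^*} - 2k_{i^*} - 1\})$.

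To verify that $c$ is a proper coloring of $\KG(\cZ)$, suppose $G, G' \in \cZ$ are disjoint with $c(G) = c(G')$. If the common color lies in~$S$, then $G_i$ and $G'_i$ are nonempty subsets of $[n_i+1] = [2]$ of cardinality~$\ne 1$, forcing $G_i = G'_i = [2]$ and contradicting disjointness. Otherwise the common color is $(i^*, j)$ with $i^* \in R$, so both $G$ and $G'$ fall in the pigeonhole branch at the same index~$i^*$, which gives $|G_{i^*}|, |G'_{i^*}| \ge k_{i^*} + 2$. If $j < n_{i^*} - 2k_{i^*} - 1$ the cap is inactive and $\min G_{i^*} = j = \min G'_{i^*}$, contradicting $G_{i^*} \cap G'_{i^*} = \emptyset$. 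If $j = n_{i^*} - 2k_{i^*} - 1$, both $G_{i^*}$ and $G'_{i^*}$ are trapped in the window $\{n_{i^*} - 2k_{i^*} - 1,\dots,n_{i^*} + 1\}$ of size $2k_{i^*} + 3$, while $|G_{i^*}| + |G'_{i^*}| \ge 2k_{i^*} + 4$, so no such disjoint pair exists. Theorem~\ref{theo:sanyal}(1) then yields $d \ge n - t + 1 = 2k + |R| + 1$, which is the desired bound.

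The main obstacle is calibrating the coloring so that the budget allocation, the pigeonhole step, and the capped-min argument all mesh at precisely $t$ colors. The crux is that the cap $n_i - 2k_i - 1$ must be small enough for the color count to add up to $t$, while the resulting window of size $2k_{i^*} + 3$ must be narrow enough that no two disjoint sets of size $\ge k_{i^*} + 2$ can fit inside; this last inequality is exactly what the hypothesis $k_i \le \fracfloor{n_i - 2}{2}$ delivers.
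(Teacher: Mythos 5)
Your proposal is correct and follows essentially the same route as the paper: choose $k_i\le\lfloor(n_i-2)/2\rfloor$ summing to $k$ (with $k_i=0$ on segments), argue by pigeonhole that every minimal non-face has a ``large'' block, and color by a Kneser-type coloring of that block, then invoke Sanyal's Theorem with Remark~\ref{rem:entireQ}. The only difference is cosmetic: the paper assigns to each factor a proper coloring $\gamma_i$ of $\KG_{n_i+1}^{k_i+2}$ as a black box, whereas you unfold that coloring explicitly as the ``capped minimum'' $c(F)=\min(F\cup\{n_i-2k_i-1\})$ --- which is exactly the coloring the paper records in Section~\ref{subsubsec:kneser} as witnessing $\chi(\KG_n^k)\le n-2k+2$. (One tiny slip: you write $\sum_{i\in R}(|G_i|-1)=k+1$, but the sum in the defining property of $\cZ$ ranges only over nonempty blocks; restricting to nonempty $G_i$ the pigeonhole conclusion is unaffected.)
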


\begin{proof}
Let $k_1,\dots,k_r\in\N$ be such that 
\[
   \sum_{i\in[r]} k_i
   \ = \
   k \quad\textrm{and}\quad 
   \begin{cases}
     k_i=0 & \textrm{for } i\in S;\\
     0\le k_i\le \frac{n_i-2}{2} & \textrm{for } i\in R.
   \end{cases}
\]

Observe that
\begin{enumerate}
\item such a tuple exists since $k\le \sum_{i\in R}
  \fracfloor{n_i-2}{2}$, and 
\item for any minimal non-face $G \eqdef G_1\uplus\dots\uplus G_r$ of
  $\cZ$, there exists $i\in[r]$ such that $|G_i|\ge
  k_i+2$. Indeed, if $|G_i|\le k_i+1$ for all $i\in[r]$, then
  \[
     k+1 
     \ = \
     \sum_{i\, | \,  G_i\ne \emptyset} (|G_i|-1)
     \ \le \
     \sum_{i\, | \,  G_i\ne \emptyset} k_i
     \ \le \
     \sum_{i\in[r]} k_i
     \ = \
     k,
  \]
which is impossible.
\end{enumerate}

For all $i\in[r]$, we fix a proper coloring
$\gamma_i:{[n_i+1]\choose[k_i+2]}\to[\chi_i]$ of the Kneser graph
$\KG_{n_i+1}^{k_i+2}$, with $\chi_i=1$ if $i\in S$ and $\chi_i
=n_i-2k_i-1$ if $i\in R$~---~see Section~\ref{subsubsec:kneser}. We
define a coloring $\gamma:\cZ\to[\chi_1]\uplus\dots\uplus[\chi_r]$ of
the Kneser graph on $\cZ$ as follows. Let $G \eqdef G_1\uplus\dots\uplus
G_r$ be a given minimal non-face of~$\cZ$. We arbitrarily choose an
$i\in[r]$ such that $|G_i|\ge k_i+2$, and a subset~$g$ of~$G_i$ with
$k_i+2$ elements. We color~$G$ with the color of~$g$ in
$\KG_{n_i+1}^{k_i+2}$, that is, we define $\gamma(G) \eqdef \gamma_i(g)$.

The coloring $\gamma$ is a proper coloring of the Kneser
graph~$\KG(\cZ)$. Indeed, let $G \eqdef G_1\uplus\dots\uplus G_r$
and $H \eqdef H_1\uplus\dots\uplus H_r$ be two minimal non-faces of
$\cZ$ related by an edge in $\KG(\cZ)$, which means
that they do not intersect.  Let $i\in[r]$ and $g\subset G_i$ be such
that we have colored~$G$ with~$\gamma_i(g)$, and similarly $j\in[r]$
and $h\subset G_j$ be such that we have colored~$H$
with~$\gamma_j(h)$. Since the color sets of~$\gamma_1,\dots,\gamma_r$
are disjoint, the non-faces $G$~and~$H$ can receive the same color
$\gamma_i(G)=\gamma_j(H)$ only if $i=j$ and $g$~and~$h$ are not
related by an edge in~$\KG_{n_i+1}^{k_i+2}$, which implies that $g\cap
h\ne\emptyset$. But this cannot happen, because $g\cap h\subset
G_i\cap H_i$, which is empty by assumption.  Thus, $G$ and $H$ get different
colors.

This provides a proper coloring of $\KG(\cZ)$ with $\sum
\chi_i$ colors. By Theorem~\ref{theo:sanyal} and
Remark~\ref{rem:entireQ}, we know that the dimension $d$ of the
projection is at least
\[
  \sum_{i\in[r]} n_i - \sum_{i\in[r]} \chi_i +1
  \ = \ 
  2k+|R|+1. \qedhere
\]
\end{proof}

\begin{theorem}[Topological obstruction for high-dimensional skeleta]
\label{theo:topObstr-large-k}
If $k\ge \Lfloor\frac12\sum_i n_i\Rfloor$, then any
$(k,\sub{n})$-PPSN polytope is combinatorially equivalent to
$\simplex_{\sub{n}}$:
  \[
   \delta_{pr}(k,\sub{n})
   \ \ge \ 
   \sum n_i.
  \]
\end{theorem}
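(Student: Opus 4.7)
The plan is to apply Sanyal's topological obstruction (Theorem~\ref{theo:sanyal}) with $P$ a combinatorially equivalent copy of $\simplex_\sub{n}$ in general position inside $\R^n$ where $n \eqdef \sum_i n_i$, and with $\cF$ the set of all $k$-faces of $\simplex_\sub{n}$. This produces exactly the simplicial complex~$\cK$ of equation~(\ref{eq:complexK}) and its system $\cZ$ of minimal non-faces described in the preceding lemma. By Remark~\ref{rem:entireQ}, $\cK$ cannot be the entire boundary complex of the Sanyal polytope~$Q$, so part~(1) of Theorem~\ref{theo:sanyal} applies and gives $d \ge n - \chi(\KG(\cZ)) + 1$. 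In order to conclude $d \ge n$, it therefore suffices to prove $\chi(\KG(\cZ)) = 1$, \ie that $\KG(\cZ)$ has no edges.

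The combinatorial heart of the argument is thus the following claim: under the hypothesis $k \ge \Lfloor\frac12 n\Rfloor$, no two elements $G, H \in \cZ$ are disjoint. Write $G \eqdef G_1 \uplus \dots \uplus G_r$ and $H \eqdef H_1 \uplus \dots \uplus H_r$, set $T_G \eqdef \{i : G_i \ne \emptyset\}$ with $s_G \eqdef |T_G|$, define $T_H$ and $s_H$ analogously, and let $T \eqdef T_G \cup T_H$. The defining equation of $\cZ$ yields $\sum_i |G_i| = k+1+s_G$ and $\sum_i |H_i| = k+1+s_H$. If $G$ and $H$ were disjoint, then $|G_i| + |H_i| \le n_i + 1$ would hold pointwise, and restricting the sum to $i \in T$ (the remaining terms vanish) would give
\[
  2k+2+s_G+s_H \ = \ \sum_{i \in T}(|G_i|+|H_i|) \ \le \ \sum_{i \in T} n_i + |T|.
\]
Since $|T| = s_G + s_H - |T_G \cap T_H| \le s_G + s_H$, this rearranges to $2k+2 \le \sum_{i \in T} n_i \le n$, forcing $k \le \Lfloor\frac12 n\Rfloor - 1$ regardless of the parity of $n$, and contradicting the assumption.

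The step I expect to be trickiest in the write-up is getting the bookkeeping in the above display correct: a naive summation over all $r$ indices only produces $2k+2+s_G+s_H \le n + r$, which is too weak by a term of order $r - s_G - s_H$ whenever many blocks of $G$ or $H$ are empty. Restricting the summation to $T$ and then exploiting the identity $|T| - s_G - s_H = -|T_G \cap T_H| \le 0$ is the only genuinely combinatorial ingredient; everything else reduces to a direct invocation of Sanyal's obstruction via Theorem~\ref{theo:sanyal}(1) and Remark~\ref{rem:entireQ}.
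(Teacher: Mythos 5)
Your proof is correct and follows essentially the same route as the paper's: both reduce to showing that $\KG(\cZ)$ has no edges by comparing $\sum_{i\in T}(|G_i|+|H_i|) = 2k+2+s_G+s_H$ against the pointwise disjointness bound $|G_i|+|H_i|\le n_i+1$ and then using $|T|\le s_G+s_H$ (the paper phrases this as $\sum_{i\in A}(|G_i|+|H_i|)\ge 2k+2+|A|$ for $A=T_G\cup T_H$, which is the same observation). The only cosmetic difference is that the paper runs the inequality chain forward to exhibit an index $i$ with $|G_i|+|H_i|>n_i+1$, whereas you argue by contradiction and extract the bound $k\le\lfloor n/2\rfloor-1$.
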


\begin{proof}
  Let $G \eqdef G_1\uplus\dots\uplus G_r$ and $H \eqdef H_1\uplus\dots\uplus H_r$
  be two minimal non-faces of $\cZ$. Let $A \eqdef \{i\in[r]\ | \
  G_i\ne\emptyset \textrm{ or } H_i\ne\emptyset\}$. Then
\begin{eqnarray*}
  \sum_{i\in A} (|G_i|+|H_i|) & \ge & \sum_{G_i\ne\emptyset} (|G_i|-1)
  + \sum_{H_i\ne\emptyset} (|H_i|-1) + |A| \\ 
  & = & 2k+2+|A| 
  \ > \ 
  \sum_{i\in[r]} n_i+|A|
  \ \ge \
  \sum_{i\in A} (n_i+1).
\end{eqnarray*}
Thus, there exists $i\in A$ such that $|G_i|+|H_i|>n_i+1$, which
implies that $G_i\cap H_i\ne\emptyset$, and proves that $G\cap
H\ne\emptyset$.

Consequently, the Kneser graph $\KG(\cZ)$ is independent (and
we can color it with only one color). We obtain that the dimension $d$
of the projection is at least $\sum n_i$. In other words, in this
extremal case, there is no better $(k,\sub{n})$-PSN polytope than the
product $\simplex_{\sub{n}}$ itself.
\end{proof}

\begin{remark}\label{remark:betterColoring}
  Theorem \ref{theo:topObstr-large-k} can sometimes be strengthened a
  little: If $k = \frac{1}{2}\sum n_i-1$, and $k+1$ is not
  representable as a sum of a subset of $\{n_1,\dots,n_r\}$, then
  $\delta_{pr}(k,\sub{n})=\sum n_i$. 
\end{remark}

\begin{proof}
  As in the previous theorem, we prove that the Kneser
  graph $\KG(\cZ)$~is independent. Indeed, assume that
  $G \eqdef G_1\uplus\dots\uplus G_r$ and $H \eqdef H_1\uplus\dots\uplus H_r$ are
  two minimal non-faces of~$\cZ$ related by an edge
  in~$\KG(\cZ)$. Then, $G\cap H$ is empty, which implies that for all
  $i\in [r]$,
  \begin{equation}
    \label{eq:gihi}
    |G_i|+|H_i|
    \ \le \  n_i+1.
  \end{equation}
  Let $U \eqdef \{i\ | \ G_i\ne\emptyset\}$ and $V \eqdef \{i\ |
  \ H_i\ne\emptyset\}$. Then,
  \begin{eqnarray*}
    \sum_{i\in U\cup V} (|G_i| + |H_i|)
    &=&
    \sum_{i\in U} (|G_i|-1) + 
    \sum_{i\in V} (|H_i|-1) + |U| + |V|
    \ = \
    2k+2 + |U| + |V| 
    \\ &=&
    \sum_{i\in[r]} n_i + |U| + |V| 
    \ \stackrel{(\star)}{\ge} \
    \sum_{i\in U\cup V} n_i + |U\cup V|
    \ = \
    \sum_{i\in U\cup V} (n_i+1).
  \end{eqnarray*}
  Summing \eqref{eq:gihi} over $i\in U\cup V$ implies that both the
  inequality $(\star)$ and \eqref{eq:gihi} for $i\in U\cup V$ are in
  fact equalities. The tightness of $(\star)$ implies furthermore that
  $|U|+|V|=|U\cup V|$, so that $U\cap V=\emptyset$; in other words,
  $H_i$~is empty whenever $G_i$~is not. The equality
  in~\eqref{eq:gihi} then asserts that $|G_i|=n_i+1$ for all $i\in U$,
  and therefore
  \[
     k+1
     \ = \ 
     \sum_{i\in U} (|G_i|-1) 
     \ = \
     \sum_{i\in U} n_i
  \]
  is representable as a sum of a subset of the $n_i$, which
  contradicts the assumption. 
\end{proof}

Finally, to fill the gap in the ranges of $k$ covered by Theorems
\ref{theo:topObstr-small-k} and \ref{theo:topObstr-large-k}, 
we merge both coloring ideas as follows.

We partition $[r]=A\uplus B$ and choose $k_i\ge0$ for all $i\in A$
and $k_B\ge0$ such that
\begin{equation}
\label{eqSumKiAndKB}
\left(\sum_{i\in A} k_i \right) + k_B
\ \le \ k.
\end{equation}
We will determine the best choices for $A$, $B$, $k_B$ and the
$k_i$'s later.  Let $n_B \eqdef \sum_{i\in B}n_i$. Color the Kneser graphs
$\KG_{n_i+1}^{k_i+2}$ for $i\in A$ and $\KG_{n_B}^{k_B+1}$ with
pairwise disjoint color sets with
\[
\chi_i \eqdef 
\begin{cases}
n_i-2k_i-1 &\textnormal{if }2k_i\le n_i-2, \\
1 &\textnormal{if }2k_i\ge n_i-2, \\
\end{cases}
\]
and
\[
\chi_B \eqdef 
\begin{cases}
  0 & \textnormal{if } n_B=0, \\
  n_B-2k_B &\textnormal{if }2k_B\le n_B-1, \\
  1 &\textnormal{if }2k_B\ge n_B-1, \\
\end{cases}
\]
colors respectively. 

Observe now that for all minimal non-faces $G \eqdef G_1\uplus\dots\uplus
G_r$, either there is an $i\in A$ such that $|G_i|\ge k_i+2$, or
$\sum_{i\in B\,|\,G_i\neq\emptyset} (|G_i|-1)\ge k_B+1$.  Indeed,
otherwise
\[
   k+1 
   \ = \ \sum_{i\,|\,G_i\neq\emptyset}(|G_i|-1) 
   \ \le \
   \left(\sum_{i\in A}k_i\right) +k_B 
   \ \le \ k.
\]
This allows us to define a coloring of~$\KG(\cZ)$ in the
following way. For each minimal non-face $G \eqdef G_1\uplus\dots\uplus G_r$, we
arbitrarily choose one of the following strategies:
\begin{enumerate}
\item If we can find an $i\in A$ such that $|G_i|\ge k_i+2$, we choose
  an arbitrary subset $g$ of $G_i$ with $k_i+2$ elements, and color
  $G$ with the color of $g$ in $\KG_{n_i+1}^{k_i+2}$;

\item Otherwise, $\sum_{i\in B\,|\,G_i\neq\emptyset} (|G_i|-1)\ge k_B+1$,
  and we choose an arbitrary subset~$g$ of 
  \[
     \biguplus_{i\in
       B}(G_i\smallsetminus \{n_i+1\})
     \ \subset \ 
     \biguplus_{i\in B}[n_i]
     \]
     with $k_B+1$ elements and color $G$ with the color of $g$ in
     $\KG_{n_B}^{k_B+1}$.
\end{enumerate}
By exactly the same argument as in the proof of
Theorem~\ref{theo:topObstr-small-k}, one can verify that this provides
a valid coloring of the Kneser graph~$KG(\cZ)$ with
\[
   \chi
   \ \eqdef \
   \chi(A,B,\sub{k_i},k_B)
   \ \eqdef \ 
   \sum_{i\in A}\chi_i+\chi_B
\]
many colors.  Therefore Sanyal's Theorem \ref{theo:sanyal} and Remark
\ref{rem:entireQ} yield the following lower bound on the dimension $d$
of any $(k,\sub{n})$-PPSN polytope:
\[
  d 
  \ \ge \
  d_k
  \ \eqdef \
  d_k(A,B,\sub{k_i},k_B)
  \ \eqdef \ 
  \sum_i n_i+1-\chi
  \ \ge \ 
  \delta_{pr}(k,\sub{n}).
\]

It remains to choose parameters $A$, $B$, and $\{k_i\ | \ i\in A\}$
and $k_B$ that maximize this bound. We proceed algorithmically, by
first fixing $A$ and $B$, and choosing the $k_i$'s and $k_B$ to
maximize the bound on the dimension $d_k$. For this, we first start
with $k_i=0$ for all $i$ and $k_B=0$, and observe the variation of
$d_k$ as we increase individual $k_i$'s or $k_B$. By
(\ref{eqSumKiAndKB}), we are only allowed a total of~$k$ such
increases. During this process, we will always maintain the conditions
$2k_i\le n_i-1$ for all $i\in A_i$ and $2k_B\le n_B$ (which makes
sense by the formulas for $\chi_i$ and $\chi_B$).

We start with  $k_i=0$ for all $i$ and $k_B=0$. Then
\begin{eqnarray*}
  \chi(A,B,\sub{0},0) 
  & = &
  \sum_{i\in A}(n_i-1)+|S\cap A| + n_B 
  \\ & = &
  \sum_{i\in A} n_i-|A|+|S\cap A|+\sum_{i\in B} n_i 
  \ = \ 
  \sum_{i\in[r]} n_i-r+|B\cup S|,
\end{eqnarray*}
and
\[
  d_k(A,B,\sub{0},0) 
  \ = \
  1+r-|B\cup S|,
\]
where $S \eqdef \{i\in[r]\ | \  n_i=1\}$ denotes the set of segments. 

We now study the variation of $d_k$ as we increase each of the $k_i$'s
and $k_B$ by one.  For $i\in A$, increasing~$k_i$ by one decreases
$\chi_i$ by
\[
\begin{cases}
2,&\textnormal{ if }2k_i\le n_i-4,\\
1,&\textnormal{ if }2k_i= n_i-3,\\
0,&\textnormal{ if }2k_i\ge n_i-2,
\end{cases}
\]
and hence increases $d_k$ by the same amount. Observe in particular
that $d_k$ remains invariant if we increase $k_i$ for some segment
$i\in S$ (because $n_i=1$ for segments). Thus, it makes sense to
choose $B$ to contain all segments.
Similarly, increasing $k_B$ by one decreases $\chi_B$ by 
\[
\begin{cases}
2,&\textnormal{ if }2k_B\le n_B-3,\\
1,&\textnormal{ if }2k_B= n_B-2,\\
0,&\textnormal{ if }2k_B\ge n_B-1,
\end{cases}
\]
and  increases $d_k$ by the same amount.

Recall that we are allowed at most $k$ increases of $k_i$'s or $k_B$
by~\eqref{eqSumKiAndKB}. Heuristically, it seems reasonable to first
increase the $k_i$'s or $k_B$ that increase $d_k$ by two, and then
these that increase~$d_k$ by one. Hence we get a case distinction on
$k$, which  also depends on $A$ and $B$:

\begin{theorem}[Topological obstruction, general case]
  \label{theo:topObstr-all-k}
  Let $k\ge0$ and $\sub{n} \eqdef (n_1,\dots,n_r)$ with $r\ge1$ and
  $n_i\ge1$ for all~$i$. Let $[r]=A\uplus B$ be a partition
  of $[r]$ with $B\supset S \eqdef \{i\in[r]\ |\ n_i=1\}$.
  Define
\begin{align*}
   K_1
   & \ \eqdef \
   K_1(A,B)
   \ \eqdef \
   \sum_{i\in A}\fracfloor{n_i-2}{2} +
   \max\left\{0,\fracfloor{n_B-1}{2}\right\},  
   \\ 
   K_2
   & \ \eqdef \
   K_2(A,B)
   \ \eqdef \
   \left|\big\{i\in A\ | \ n_i\text{ is odd}\big\}\right| +
   \begin{cases}
      1 & \text{if } n_B\text{ is even and non-zero}, \\
      0 & \text{otherwise}.
   \end{cases}
\end{align*}
  Then the following lower bounds hold for the
  dimension of a $(k,\sub{n})$-PPSN polytope:
  \begin{center}
    \renewcommand{\arraystretch}{1.4}
    \begin{tabular}{ll}
    (1) If $0\le k\le K_1$, then 
    &
    $\delta_{pr}(k,\sub{n})
    \ \ge \ r+1-|B|+2k$;    
    \\
    (2) If $K_1\le k\le K_1+K_2$, then
    & 
    $\delta_{pr}(k,\sub{n})\ \ge \ r+1-|B|+K_1+k$; \\
    (3) If $K_1+K_2\le k$, then 
    &$\delta_{pr}(k,\sub{n})\ \ge \ r+1-|B|+2K_1+K_2.$
  \end{tabular}
\end{center}
\end{theorem}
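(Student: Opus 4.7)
The plan is to apply Sanyal's topological obstruction (Theorem~\ref{theo:sanyal}, combined with Remark~\ref{rem:entireQ} which rules out the ``entire boundary complex'' case) to the coloring of $\KG(\cZ)$ already constructed in the paragraph preceding the theorem. That construction, parameterized by the partition $[r]=A\uplus B$ (with $B\supset S$), a tuple $\{k_i\}_{i\in A}$, and an integer $k_B$ subject to $\sum_{i\in A}k_i+k_B\le k$, yields a proper coloring of $\KG(\cZ)$ with $\chi(A,B,\sub{k_i},k_B)=\sum_{i\in A}\chi_i+\chi_B$ colors, and hence a lower bound $\delta_{pr}(k,\sub{n})\ge \sum_i n_i+1-\chi(A,B,\sub{k_i},k_B)$. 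With $A$ and $B$ held fixed, everything reduces to minimizing $\chi$ subject to this budget constraint.

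I would first pin down the initial configuration $k_i=0$ for all $i\in A$ and $k_B=0$. A direct substitution into the defining formulas for $\chi_i$ and $\chi_B$ (using $B\supset S$, which ensures $n_i\ge 2$ for every $i\in A$) gives the base value $\chi(A,B,\sub{0},0)=\sum_i n_i-r+|B|$, and hence the base lower bound $1+r-|B|$. The essential observation, already made in the preceding discussion, is that $\chi$ is \emph{separable}: each unit increment of some $k_i$ (resp. $k_B$) changes $\chi$ by an amount depending only on the current value of that same variable. Reading off the piecewise-constant formulas, for each $i\in A$ the sequence of marginal decreases of $\chi$ along successive increments of $k_i$ is $(2,2,\dots,2,1,0,0,\dots)$ with $\Lfloor(n_i-2)/2\Rfloor$ twos and a single one iff $n_i$ is odd; and likewise for $k_B$, provided $n_B\ge 1$, with $\Lfloor(n_B-1)/2\Rfloor$ twos followed by a single one iff $n_B$ is even.

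Because each marginal-decrease sequence is non-increasing and the variables contribute additively to $\chi$, the greedy strategy of using every ``$-2$'' move before any ``$-1$'' move is optimal. The total reservoir of ``$-2$'' moves has size exactly $K_1$, and the reservoir of ``$-1$'' moves has size $K_2$. Spending the budget~$k$ in this order yields a total decrease of $\chi$ equal to $2k$ when $0\le k\le K_1$, to $2K_1+(k-K_1)=K_1+k$ when $K_1\le k\le K_1+K_2$, and to $2K_1+K_2$ when $k\ge K_1+K_2$. Adding each of these to the base bound $1+r-|B|$ recovers exactly the three cases of the statement.

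The main obstacle is purely bookkeeping: one must track the parity of $n_i$ and of $n_B$ carefully when translating the step functions $\chi_i(k_i)$ and $\chi_B(k_B)$ into the two-pool $(K_1,K_2)$ picture, and verify the boundary cases $n_B=0$ and $k_i=\Lfloor(n_i-2)/2\Rfloor$. Optimality of the greedy choice itself is automatic from separability plus non-increasing marginal gains, so no clever trade-off between variables can improve the bound for a given pair $(A,B)$.
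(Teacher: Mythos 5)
Your proposal reproduces the paper's argument: fix the partition $(A,B)$, start from $k_i=0$, $k_B=0$ with base bound $1+r-|B|$, observe that $\chi$ is a separable sum of step functions with non-increasing marginal decreases $(2,\dots,2,1,0,\dots)$, and greedily spend the budget $k$ on ``$-2$'' moves (of which there are exactly $K_1$) before ``$-1$'' moves (of which there are exactly $K_2$), yielding the three-case bound. The only difference is that you explicitly justify greedy optimality via separability and concavity, whereas the paper leaves this as a heuristic remark — a harmless strengthening, since the theorem only asserts that the bounds hold, which already follows from exhibiting the greedy choice of $\{k_i\}$ and $k_B$.
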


This theorem enables us to recover Theorem~\ref{theo:topObstr-small-k}
and Theorem~\ref{theo:topObstr-large-k}:

\begin{corollary}
  \label{cor:topObstr}
  Let $k\ge0$ and $\sub{n} \eqdef (n_1,\dots,n_r)$ with $r\ge1$ and $n_i\ge1$
  for all~$i$, and define $S \eqdef \{i\in[r]\ |\ n_i=1\}$ and
  $R \eqdef \{i\in[r]\ |\ n_i\ge 2\}$. 
  \begin{enumerate}
  \item If 
  \[
   0\ \le \ k \ \le \ \sum_{i\in R}\fracfloor{n_i-2}{2} + 
   \max\left\{0,\fracfloor{|S|-1}{2}\right\}, 
  \]
  then $\delta_{pr}(k,\sub{n}) \ge 2k+|R|+1$.

  \item If $k\ge \Lfloor\frac12\sum n_i\Rfloor$ then $\delta_{pr}(k,\sub{n})\ge \sum_i n_i$.
  \end{enumerate}
\end{corollary}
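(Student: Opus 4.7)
The plan is to derive this corollary directly from Theorem \ref{theo:topObstr-all-k} by specializing the partition $[r] = A \uplus B$ in two well-chosen ways, one for each part of the statement. The only work is arithmetic bookkeeping; no additional ideas are required since the combinatorial and topological content is already packaged inside Theorem \ref{theo:topObstr-all-k}.

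For part (1), the natural choice is $A = R$ and $B = S$, which is admissible because $B \supset S$ trivially. Since every $i \in B$ has $n_i = 1$, we have $n_B = |S|$, so
\[
   K_1
   \ = \
   \sum_{i\in R}\fracfloor{n_i-2}{2}
   + \max\left\{0,\fracfloor{|S|-1}{2}\right\},
\]
which is exactly the upper bound on $k$ in the hypothesis. Thus the hypothesis reads $0 \le k \le K_1$, placing us in case (1) of Theorem \ref{theo:topObstr-all-k}, whose conclusion gives
\[
  \delta_{pr}(k,\sub{n})
  \ \ge \
  r+1-|B|+2k
  \ = \
  r-|S|+1+2k
  \ = \
  2k+|R|+1,
\]
as desired.

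For part (2), the natural choice is $A = \emptyset$ and $B = [r]$, again admissible because $B \supset S$. Here $n_B = \sum_i n_i$, and I would verify by an easy parity case distinction that
\[
   K_1 + K_2
   \ = \
   \fracfloor{n_B-1}{2} +
   \begin{cases} 1 & \text{if } n_B \text{ even and non-zero}, \\ 0 & \text{otherwise}, \end{cases}
   \ = \
   \Lfloor \tfrac{1}{2} \sum_i n_i \Rfloor,
\]
so the hypothesis $k \ge \Lfloor \tfrac{1}{2}\sum_i n_i\Rfloor$ puts us into case (3) of Theorem \ref{theo:topObstr-all-k}. Its conclusion then reads
\[
  \delta_{pr}(k,\sub{n})
  \ \ge \
  r + 1 - |B| + 2K_1 + K_2
  \ = \
  1 + 2K_1 + K_2,
\]
and the same parity case distinction shows that $1 + 2K_1 + K_2 = \sum_i n_i$ regardless of whether $\sum_i n_i$ is even or odd. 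This finishes part (2).

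There is no real obstacle: the entire argument is a substitution into Theorem \ref{theo:topObstr-all-k}. The only mild care needed is handling the parity of $n_B$ in part (2) and the degenerate case $|S|=0$ in part (1) (where the $\max\{0,\lfloor(|S|-1)/2\rfloor\}$ term vanishes and everything still aligns with the $n_B = 0$ clauses in the definitions of $K_1$ and $K_2$).
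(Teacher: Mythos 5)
Your proof is correct and takes exactly the same approach as the paper: the paper's entire proof of this corollary is the single sentence ``Take $A=R$ and $B=S$ for (1), and $A=\emptyset$ and $B=[r]$ for (2),'' and you have merely carried out the implicit arithmetic verification. The parity bookkeeping you supply for part (2) and the observation that $|R| = r - |S|$ in part (1) are the only computations the paper leaves to the reader, and you do them correctly.
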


\begin{proof}
  Take $A=R$ and $B=S$ for (1), and $A=\emptyset$ and $B=[r]$ for (2).
\end{proof}

\subsection{Explicit lower bounds} \label{sec:ExplicitLowerBounds}
There is an algorithm to explicitly choose the partitions $[r]=A\uplus
B$ which yield the best bounds in Theorem~\ref{theo:topObstr-all-k}.
Since this algorithm is quite technical, we just present the best
results we obtain with this topological obstruction. We refer to
\cite{CRM-version} for further details.

We fix $K_1=K_1(R,S)$ and define $d_0=r+1-|S|$ and $n=\sum_{i\in[r]}
n_i$. The best lower bound $d_k$ that we obtain with this coloring can
be summarized explicitly by the following case distinction~---~see
Figure~\ref{fig:GraphOfDk}:

\begin{figure}[htbp]
  \centerline{\includegraphics[scale=.9]{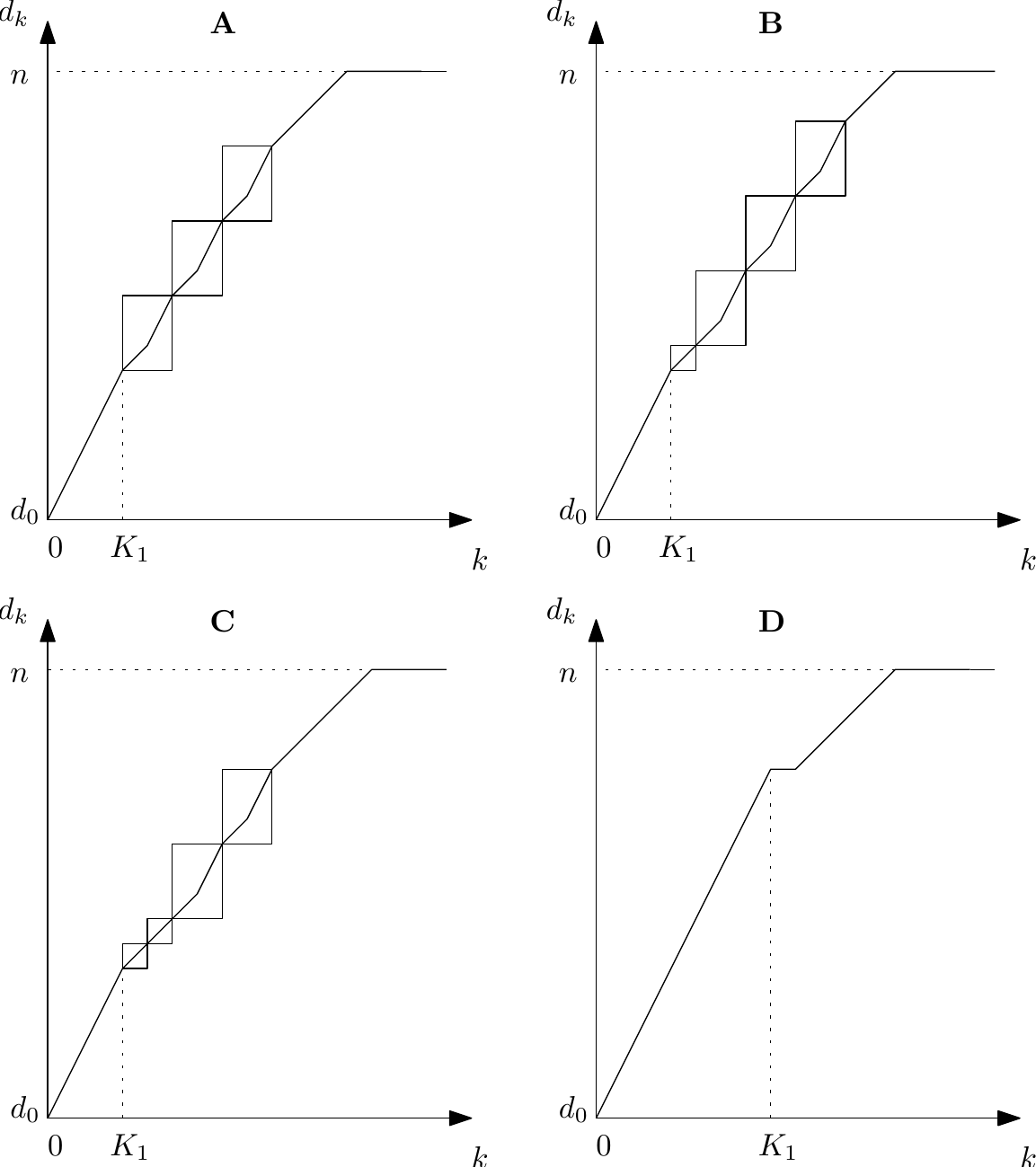}}
  \caption{\small{Four different situations for the lower bound.}}
  \label{fig:GraphOfDk}
\end{figure}

\begin{enumerate}
\item[$\mathbf{A}$.] {\it When $|S|$ is even and non-zero}: The bound
  $d_k$ increases by two for $0\le k\le K_1$. Then for each odd
  $n_i\ge 3$ we get a block with a first increment by one and a second
  increment by two. Then all increments are one until we reach the
  trivial bound $d_k=n=\sum_{i\in[r]} n_i$.
\item[$\mathbf{B}$.] {\it When $|S|$ is odd}:
As in case $\mathbf{A}$, except that the first block corresponding to
an odd $n_i\ge 3$ consists only of one increment by one.
\item[$\mathbf{C}$.] {\it When $|S|=0$ and there is an odd $n_i$}:
As in the cases $\mathbf{A}$ and $\mathbf{B}$, except that the first
two blocks corresponding to odd $n_i$'s consists only of one
increment. If there is only one odd $n_i$ then all increments from
$K_1$ on are one until we reach the trivial bound.
\item[$\mathbf{D}$.] {\it When all $n_i$ are even}: The bound $d_k$
  increases by two for $0\le k\le K_1$. The next increment is zero,
  and all further increments are one until we reach the trivial bound
  $d_k=n=\sum_{i\in[r]} n_i$.
 \end{enumerate}

\begin{remark}
Remark~\ref{remark:betterColoring} still provides a better bound for certain cases, as for example when $k=2$ and $\sub{n}=(4,2)$.
\end{remark}

\subsection{Comparison with R\"orig and Sanyal's
  results} \label{sec:ComparisonWithRSBounds} 

In \cite{rs-npps}, R\"orig and Sanyal address the special case
$n_1=\dots=n_r \eqfed n$ and $r\geq 2$. In their Theorem 4.5, they obtained
the following bound:
\[
\delta_{pr}(k,(n,\dots,n))\ \geq \ 
\begin{cases}
2k+r+1, &\textnormal{if } 0\leq k\leq r\fracfloor{n-2}{2},\\
k+\frac{1}{2}r(n-1)+1, &\textnormal{if } r\fracfloor{n-2}{2}< k\leq r\fracfloor{n-1}{2},\\
\alpha+r(n-1)+1, &\textnormal{if } r\fracfloor{n-1}{2}<k\leq rn,
\end{cases}
\]
where
$\alpha \eqdef \fracfloor{k-r\fracfloor{n-1}{2}}{\fracfloor{n+2}{2}}$. We
compare this with the graphs $C$ (if $n$ is odd) and $D$ (if $n$ is
even) of Figure~\ref{fig:GraphOfDk}. Their first case matches exactly
with the bounds of this paper, since
$K_1=r\fracfloor{n-2}{2}$. Plugging in $k=K_1$ into their first two
cases yields the same bound if $n$ is odd, but a different one if $n$
is even. If $n$ is even then the difference is $\fracfloor{r}{2}$. The
bound in their second case has slope one, that is, it increases by
one if $k$ increases by one, and the bound in their third case has
a much smaller slope. Hence the bounds of Section
\ref{sec:ExplicitLowerBounds} are stronger, especially around
$k\approx\frac{rn}{2}$. In the case $r=1$ both bounds are equal,
because at $k=K_1$ we already reach the best possible bound $rn$.

\section*{Acknowledgements}

We thank Bernardo Gonz\'alez Merino for extensive and fruitful
discussions on the material presented here.

We are indebted to Thilo R\"orig and Raman Sanyal for discussions and
comments on the subject of this paper, and their very careful reading
of an earlier draft.

We are grateful to the Centre de Re\c cerca Matem\`atica (CRM) and the
organizers of the i-Math Winter School DocCourse on Combinatorics and
Geometry, held in the Spring of 2009 in Barcelona, for having given us
the opportunity of working together during three months in a very
stimulating environment.

We would like to thank Michael Joswig for suggesting to look
at even polytopes.

Finally, we are grateful to the two anonymous referees for their
numerous remarks that helped to improve the presentation of the paper.

\bibliographystyle{alpha}
\bibliography{biblio}

\end{document}